\newtheorem{thm}{Theorem}
\newtheorem{prop}[thm]{Proposition}
\newtheorem{lemma}[thm]{Lemma}
\newtheorem{cor}[thm]{Corollary}
\newtheorem{dfn}[thm]{Definition}
\newtheorem{rmk}[thm]{Remark}
\newtheorem{ex}{Question}
\numberwithin{thm}{section}
\newcommand {\essup} {\mathop \mathrm{ess \; sup}}
\newcommand {\Belt} {\mathrm{Belt}}
\newcommand {\out} [1] {{}}
\newcommand {\shb} {\stackrel{!}{=}}
\newcommand {\bb} [1] {\mathbbm{#1}}
\newcommand {\cal} [1] {\mathcal{#1}}
\newcommand {\ol} [1] {\overline{#1}}
\newcommand {\cinf} {\hat{\bb C}}
\newcommand {\MPIs} {Max Planck Institute for Mathematics in the Sciences}
\newcommand {\be}{\begin{equation}\begin{aligned}}
\newcommand {\ee}{\end{aligned}\end{equation}}
\newcommand {\benn}{\begin{equation*}\begin{aligned}}
\newcommand {\eenn}{\end{aligned}\end{equation*}}
\title{Higher Bers Maps}
\author{Guy Buss} 
\date{\today}
\begin{document}
\begin{abstract}
   The Bers embebbing realizes the Teichm\"uller space of a Fuchsian group $G$ as a open, bounded and contractible subset of the complex Banach space of bounded quadratic differentials for $G$. It utilizes the schlicht model of Teichm\"uller space, where each point is represented by an injective holomorphic function on the disc, and the map is constructed via the Schwarzian differential operator.\\ \\
   In this paper we prove that a certain class of differential operators acting on functions of the disc induce holomorphic mappings of Teichm\"uller spaces, and we also obtain a general formula for the differential of the induced mappings at the origin. The main focus of this work, however, is on two particular series of such mappings, dubbed higher Bers maps, because they are induced by so-called higher Schwarzians -- generalizations of the classical Schwarzian operator. For these maps, we prove several further results.\\ \\
   The last section contains a discussion of possible applications, open questions and speculations. 
\end{abstract}
\maketitle
\tableofcontents
\section{Introduction}
Teichm\"uller spaces are central objects in geometry today, with deep connections to various other seemingly unrelated topics. There are several ways to represent them, of which three will come up in this paper. One of the most useful representations of Teichm\"uller spaces is via the \emph{Bers embedding}, which realizes the Teichm\"uller space of a Fuchsian group $G$ as an open contractibe domain in the complex Banach space $B_2(\bb D, G)$ of bounded quadratic differentials for $G$. The Bers embedding relies on a different representation of Teichm\"uller space, the so-called schlicht model (see Section \ref{MoTS}), where each point is represented by a schlicht (or univalent) function on the disc, and is constructed with the help of the classical Schwarzian differential operator. \\ \\
In the present paper we investigate as to whether other differential operators yield holomorphic maps of Teichm\"uller spaces into other complex Banach spaces. The main result on which the rest of the work builds upon is the following theorem (for a more precise and technical version, see Thm.~ \ref{higherBersholomorphic}). 
\begin{thm}
	  Let $Q$ be a differential operator that maps schlicht functions to holomorphic functions and that satisfies 
	  $$ Q[f\circ g] = (Q[f]\circ g)(g')^{m} \qquad \forall g \in \mathrm{PSL}(2, \bb C)\;,$$
	  for some $m \in \bb N$. Further, suppose $Q[f]$ is a polynomial in $f'',\ldots, f^{(N)}$ and $(f')^{-1}$ with           complex coefficients. Then $Q$ induces a holomorphic map of any Teichm\"uller space into the complex Banach space  			of $m$-differentials.
\end{thm}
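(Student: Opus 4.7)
My plan is to mirror the classical Bers embedding argument, replacing each use of the Schwarzian by the corresponding property of $Q$. First I would invoke the schlicht model: for $[\mu]\in T(G)$, extend $\mu$ suitably to $\hat{\bb C}$ (by zero off the half-plane where it is originally defined), solve the Beltrami equation, and let $f^\mu$ denote the schlicht restriction of the resulting $w^\mu$ to $\bb D$. The key properties are the intertwining $f^\mu\circ g=\gamma_g\circ f^\mu$ for $g\in G$ and some $\gamma_g\in\psl$, together with the holomorphic dependence $\mu\mapsto f^\mu$ supplied by Ahlfors--Bers. The candidate map is $[\mu]\mapsto Q[f^\mu]$.

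Next I would verify that $Q[f^\mu]$ lies in the Banach space $B_m(\bb D,G)$ of bounded $m$-differentials for $G$. Holomorphy on the disc is immediate from the hypothesis. The automorphic transformation $(Q[f^\mu]\circ g)(g')^m=Q[f^\mu]$ follows by combining the intertwining with the precomposition rule: $(Q[f^\mu]\circ g)(g')^m=Q[f^\mu\circ g]=Q[\gamma_g\circ f^\mu]$, and the last term collapses to $Q[f^\mu]$ provided $Q$ is left-invariant under $\psl$. Left-invariance is not explicitly stated in the informal version above, so I would either deduce it from the polynomial form of $Q$ combined with the precomposition rule, or treat it as part of the more precise hypothesis in Thm.~\ref{higherBersholomorphic}.

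The technical core, and the step I expect to be the main obstacle, is the boundedness estimate $\sup_{z\in\bb D}(1-|z|^2)^m|Q[f^\mu](z)|<\infty$. I would expand $Q[f^\mu]$ monomial by monomial in $1/(f^\mu)'$ and $(f^\mu)^{(j)}$ for $2\le j\le N$, and bound each monomial using classical Koebe-type distortion estimates for schlicht functions, which yield polynomial growth bounds in $1/(1-|z|)$ for each $f^{(k)}$ and for $1/f'$. The precomposition rule applied to affine maps $z\mapsto\lambda z+c$ forces a weight-counting identity on each monomial, so that these growth rates combine dimensionally to $O((1-|z|)^{-m})$. Turning this dimensional counting into a genuine uniform bound is the real work: for the classical Schwarzian it is the Kraus--Nehari theorem, and an analogous argument has to be extracted from the polynomial and equivariance hypotheses on $Q$.

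Finally, holomorphic dependence of the map on $[\mu]$ follows by combining holomorphic dependence of all derivatives $(f^\mu)^{(k)}$ on $\mu$ (Ahlfors--Bers plus Cauchy estimates) with the polynomial form of $Q$ and the uniform bound from the previous step, which promotes pointwise holomorphy to holomorphy in the $B_m$-norm. Descent to $T(G)$ uses that $\mu\sim\nu$ implies $f^\mu$ and $f^\nu$ agree up to a left M\"obius factor, which is absorbed by the left-invariance of $Q$ established in step two.
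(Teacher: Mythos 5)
Your overall framework (schlicht model, Gateaux holomorphy plus local boundedness, Hartogs) is the paper's, but two steps as proposed would not go through. First, you lean on left-invariance $Q[\gamma\circ f]=Q[f]$ for all $\gamma\in\psl$, both to get the automorphy $(Q[f^\mu]\circ g)(g')^m=Q[f^\mu]$ for $g\in G$ and to descend to equivalence classes. This is neither a hypothesis nor a consequence of the hypotheses: the higher Schwarzians $\sigma^A_n$, $n\ge 4$, of Section \ref{HSD} satisfy every assumption of Theorem \ref{higherBersholomorphic} yet are invariant only under postcomposition by \emph{affine} maps, not all of $\mathrm{PSL}(2,\bb C)$ (the paper proves exactly this). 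Correspondingly, the theorem only asserts a map into $B_m(\bb D)$, with no claim that the image consists of $G$-automorphic forms, and well-definedness on $\cal T_B(G)$ is obtained not by absorbing a M\"obius factor but by the 1-point normalization built into $\pi_{BS}$: equivalent coefficients have normalized solutions agreeing on $\partial\bb D$, hence identically on $\bb D$, so $Q[w^\mu|_{\bb D}]$ depends only on $[\mu]$ with no invariance of $Q$ needed.

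Second, the boundedness step, which you correctly identify as the core, is not supplied, and the route you sketch does not close: bounding each monomial by the classical growth estimates for $f^{(k)}$ and $1/f'$ and then weight-counting overshoots the required power already for the Schwarzian itself (for instance $(f''/f')^2$ gets bounded by a large negative power of $1-|z|$ rather than the needed $(1-|z|)^{-2}$; Kraus--Nehari is not a monomial-by-monomial distortion count). The paper's Lemma \ref{BetaLocallyBounded} instead argues as follows: precompose $w^{\mu+t\nu}$ with the affine map carrying $\bb D$ onto $\bb D_r(z_0)$ with $r=\mathrm{dist}(z_0,\partial\bb D)$; use that the normalized solutions form a normal, equi-H\"older family to bound the composed functions uniformly, hence by Cauchy estimates all derivatives at the origin uniformly in $t$; deduce a uniform bound $|Q[\,\cdot\,](0)|\le N$ for the whole family from the polynomial structure of $Q$; and only then apply the precomposition rule \emph{once, to $Q$ as a whole}, giving $|Q[w^{\mu+t\nu}](z_0)|\le N r^{-m}\le 4^m N\,\lambda^m_{\bb D}(z_0)$ via the standard comparison of $r$ with the Poincar\'e density. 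This rescaling argument is the missing content; your concluding step (pointwise holomorphy in $t$ plus the local bound upgraded to holomorphy in the $B_m$-norm via Cauchy integral estimates) does match the paper's second lemma.
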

All necessary concepts and notations are explained in Section \ref{Bersemb}. This section also contains a considerable amount of background information concerning the Bers embedding, so that the results of this paper can be viewed with the right perspective. \\ \\
Section \ref{HMoTS} contains the aforementioned theorem and its proof, as well as Theorem \ref{ThmDifferential}, describing the derivatives at the origin of the maps $\beta^Q$, for any admissibe $Q$ (i.e., $Q$ satisfies the assumptions of the above theorem), as bounded operators between Banach spaces.\\ \\
In Section \ref{HBM}, we show that there do exist admissible operators. We focus on two series of maps, $\beta^A_n$ and $\beta^B_n$, $n\geq 3$, which we call \emph{higher Bers maps} because they are constructed with the help of the $A$ and $B$ series of \emph{higher Schwarzians} -- generalizations of the Schwarzian derivative that are discussed in Section \ref{HSD}.\\ \\
For these higher Bers maps, we prove several results. First of all, we establish the surjectivity of the differential at the origin (Thm.~ \ref{HBDiffSurj}). Then we study the kernel, for which we derive a precise expression in Thm.~ \ref{KernelHBM}. This description implies, in particular, that the differential is injective when restricted to the tangent space of all Teichm\"uller spaces except the universal one (Cor. \ref{InjOnTan}).\\ \\
Subsection \ref{HBM-SGR} contains two results (Thm.~ \ref{HighInj} and Thm.~ \ref{HighInj2}) that are not infinitesimal in nature and state that the fibre of the higher Bers maps at the origin consists of a single point.\\ \\
Of course, the main question is whether the higher Bers maps are embeddings, and the results of this paper hint that this might be the case. Yet there are still several technical difficulties to overcome. This, as well as several other interesting questions and other possible applications of the higher Bers maps, are discussed in the last section. 
\subsection{Acknowledgements}
This work is part of my Ph.D. thesis written under the supervision of J\"urgen Jost at the \MPIs{} in Leipzig, whom I thank sincerely for his constant support and encouragement. I also want to thank Brian Clarke for careful proofreading and both him and Christoph Sachse for many inspiring discussions. The work was funded both by the Research Training Group \emph{Analysis, Geometry and their Interaction with the Natural Sciences} at the University of Leipzig and the \emph{International Max Planck Research School} at the MPI Leipzig.
\section{Models of Teichm\"uller Spaces and the Bers Embedding} \label{Bersemb}
The purpose of this section is to set up terminology and introduce some notation, as well as to cast the later work in the right perspective. The material in this section is completely standard and can be found in any reference on Teichm\"uller theory, e.g., \cite{TCAToTS}.\\ \\
For the rest of  the paper, let $\cinf$ denote the Riemann sphere, $\bb D$ the unit disc and $\bb D^c$ the interior of its complement in $\cinf$. If $A$ is a Banach space, let $\bb B_r(A,p)$ denote the open ball of radius $r$ around $p \in A$. If $p$ is not specified, it is assumed to be the origin. For a function defined on a domain in $D \subset \bb C$ we write $\partial f$ or $f'$ for its holomorphic derivative and $\bar \partial f$ for the anti-holomorphic derivative.
\subsection{Models of Teichm\"uller Space} \label{MoTS}
Let $G$ be an arbitrary Fuchsian group acting on the unit disc $\bb D$. The Teichm\"uller space of $G$ can be realized in several ways, one of which being the \emph{Beltrami model} $\cal T_B(G)$. To this end, let
$L^\infty_{(-1,1)}(\bb D, G)$ denote the Banach space of measurable complex valued functions on $D$ that satisfy
$$ \mu(z) = \mu (g z)\cdot
\bar \partial g(z)/\partial g(z)\;, \qquad \mathrm{ for
\;almost\; all} \: z \in D, \; \forall \;g \in
G\;.$$
The Banach norm used here is the usual sup-norm. Points of the Beltrami model are given by equivalence classes of these functions of norm less than one, denoted by $\Belt( \bb D^c,G)$,
$$ \cal T_B(G) \cong \bb B_1\left(L^\infty_{(-1,1)}(\bb D^c, G)\right)/{\sim} \;=\;\Belt(\bb D^c, G)/{\sim}\;.$$
The equivalence relation utilizes the solution of the Beltrami equation. More precisely, given $\mu \in \bb B_1(L^\infty(\bb C))$. Then there exists a solution $w[\mu]$ to the Beltrami equation,
$$ \bar \partial w[\mu] (z) = \mu (z) \partial w[\mu] (z) \qquad \mathrm{ for
\;almost\; all} \: z \in \bb C\;,$$
which is necessarily quasiconformal, and this solution is unique up to post-composition by M\"obius transformations (see, e.g., \cite{RMTfvM}, \cite{LoQM} or \cite{QA}).\\ \\
It is also crucial for Teichm\"uller theory that if we have a family of Beltrami differentials depending on some parameters $t_i$, the reqularity of the solutions in $t_i$ is `at least as good' as the regularity of the family itself (for a more precise statement, see \cite{LoQM}). In particular, if we have a family of Beltrami differentials depending holomorphically on a paramater $t$, then the properly normalized solutions are holomorphic in $t$. For small $t$ we can hence write
$$ w^{t\nu}(z) = w^\nu_0(z) + tw^\nu_1(z) + t^2 w_2^{\nu}(z) + \ldots\;.$$
There is a closed expression for the first order approximation $w^\nu_1$ (see again, e.g., \cite{LoQM}), which we will use in the proof of Thm.~ \ref{ThmDifferential},
\begin{eqnarray} \label{1stOrderBeltSol}
   w_1^{\nu} (z) = -\frac{z(z-1)}{\pi}\int_{\bb C} \frac{\nu(w)}{w(w-1)(w-z)} d^2w\;.
\end{eqnarray}
Now, in order to use the theorem on the existence of a solution to define an equivalence relation, one has to choose an extension of the elements in $L^\infty_{(-1,1)}(\bb D^c, G) \subset L^\infty(\bb D^c)$ to elements of $L^\infty(\bb C)$. There are two canonical choices of which we use the one where the function is set equal to zero outside of $\bb D^c$. We denote the solution to the Beltrami equation by $w^\mu$. Observe that $w^\mu$ is an injective holomorphic injective function on the unit disc, and thus we can get rid of the M\"obius degree of freedom by \emph{1-point normalizing} $w^\mu$, i.e. we require $w^\mu (0) = 0, (w^\mu)'(0)=1$ and $(w^\mu)''(0)=0$. This is implicitly assumed in the symbol $w^\mu$. One can see that the family $\cal F_\delta$ of all quasiconformal homeomorphisms normalized in this way and with dilatation of norm bounded by $\delta < 1$ is a \emph{normal family} as in the case of the more common 3-point normalization.\\ \\
Now we are in position to describe the equivalence relation: $\mu, \nu \in \bb B_1(L^\infty(\bb D^c))$ are called equivalent iff $w^\mu_{|\partial \bb D} = w^\nu_{|\partial \bb D}$. \\ \\
The second model we will utilize is the \emph{schlicht model} $\cal T_S(G)$, which is defined by
$$ 
	\cal T_S(G):=\left\{ 
         \begin{array}{l}
            f \in \cal S^0(\bb D):\; \exists \: \textrm{an or.-pres. quasiconf. homeo.}\\
            w: \cinf \rightarrow \cinf\; \textrm{such that} \; w|_{\bb D}=f \;\textrm{and which is}\\
            \textrm{compatible with $G$, i.e., $wGw^{-1}$ is again Kleinian}
        \end{array}\right\}
$$
Here $\cal S^0(\bb D)$ denotes the schlicht functions on the disc that are 1-point normalized at the origin. 
The map $$\pi_{BS}: \cal T_B(G) \rightarrow \cal T_S(G)\;, \qquad \pi_{BS}[\mu] = w^\mu|_{\bb D}\;,$$
mapping the Beltrami model to the schlich model is a bijection. First of all, $\mu \sim \nu$ implies $w^\mu_{|\bb D} \equiv w^\nu_{|\bb D}$, since if two holomorphic functions on the disc agree on $\partial \bb D$ they agree on the whole disc. By definition, the restrictions are quasiconformally extendable. The last ingredient is the fact that the restriction of the dilatation $\hat \mu(w):= \bar \partial w / \partial w$ of a quasiconformal homeomorphism $w$ of the Riemann sphere to $\bb D^c$ is in $\Belt(\bb D^c,G)$ iff $wGw^{-1}$ is Kleinian (see, e.g., \cite{TCAToTS} or \cite{UFaTS}).\\ \\
The reason these two models are of importance is the following: In the Beltrami model, points of Teichm\"uller space are quite hard to handle, since they are equivalence classes of measurable functions where the equivalence is defined utilizing the solution of a non-linear PDE. On the other hand, the Beltrami model induces a complex structure on Teichm\"uller space from the complex structure of the Banach space $L^\infty_{(-1,1)}(\bb D^c, G)$. In contrast, the points of the schlicht model are schlicht functions on the disc, and so all the machinery from complex analysis and geometric function theory is available to them. The price to pay is that there is no way of directly seeing the complex structure on Teichm\"uller space in this model.
\subsection{The Schwarzian Derivative}
The Bers embedding utilizes the schlicht model of Teichm\"uller space, where points are given by schlicht functions.
Bers' idea was to map Teichm\"uller space into another function space by applying a particular non-linear differential operator to the individual functions, namely the Schwarzian derivative.\\ \\
In order to define the Schwarzian, let $f:D \rightarrow
\bb C$ be locally injective and thrice differentiable on a domain $D \subset \bb C$, and set
$$S_f(z) : = \left(\frac{f''(z)}{f'(z)}\right)' -\frac{1}{2}\left(\frac{f''(z)}{f'(z)}\right)^2\;.$$
A simple computation shows that $S_f(z) \equiv 0$ on an open set iff $f \in \textrm{PSL}(2,\bb C)$, and that the chain rule for the Schwarzian of a composition is given by
\begin{eqnarray} \label{trafoSchw}
    S_{f\circ g} = (S_f \circ g)(g')^2 + S_g\;.
\end{eqnarray}
Hence $S_f = S_{1/f}$ since $1/f$ is of the form $M\circ f$ with $M\in \textrm{PSL}(2,\bb C)$, so we can
define the Schwarzian derivative on the set of locally injective meromorphic functions, which we denote by $\cal
M_{\mathrm{li}}(D)$. To do so, for $f \in  \cal
M_{\mathrm{li}}(D)$, set
$$S_f(p) = S_{\frac{1}{f}}(p)\;,$$ 
for all poles $p$ of $f$. Observe that local injectivity forces the pole
to be simple. The chain rule also allows us to define the Schwarzian acting on domains in $\cinf$ containing $\infty$ as follows: We set $\phi(z) = f(1/z)$ and define
$$ S_f(\infty) = \lim_{z\rightarrow 0} z^4 S_\phi (z)\;.$$
A very nice fact about the Schwarzian is the fact that the solution theory is understood very well (see, e.g., \cite{UFaTS}, Thm. II.1.1). 
\sloppy
\begin{prop} \label{SchwSol}
    For any simply connected domain $D \subset \cinf$, the Schwarzian derivative is a surjective operator \mbox{$S: \cal M_{\mathrm {li}}(D) \rightarrow \cal O(D)$}. Moreover, the solution $f$ to the equation $S_f = \phi$ is unique up to 				post-composition by M\"obius transformations.
\end{prop}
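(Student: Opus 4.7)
The plan is to reduce the nonlinear equation $S_f = \phi$ to the linear second-order ODE
\begin{equation*}
u'' + \tfrac{1}{2}\phi\, u \;=\; 0,
\end{equation*}
whose solution theory on a simply connected domain is classical. The starting calculation is that for any two solutions $u_1, u_2$ one has $S_{u_1/u_2} = \phi$ on the open set where $u_2 \neq 0$; moreover the Wronskian $W = u_1' u_2 - u_1 u_2'$ is a constant, because the ODE has no first-order term (so $W' = 0$), and is nonzero exactly when $u_1, u_2$ are linearly independent.

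For existence with $D \subset \bb C$ simply connected, local holomorphic solutions of the ODE extend globally by the monodromy theorem, producing a two-dimensional solution space. Taking a basis $u_1, u_2$, the ratio $f = u_1/u_2$ is meromorphic on $D$. Local injectivity follows from $f' = W/u_2^2 \neq 0$ away from the zeros of $u_2$, and from $(1/f)' = -W/u_1^2 \neq 0$ near such zeros (where $u_1 \neq 0$ by linear independence); hence $f \in \cal M_{\mathrm{li}}(D)$ with $S_f = \phi$. If $\infty \in D$, I would apply the change of variable $z \mapsto 1/z$, solve on the resulting simply connected subdomain of $\bb C$, and pull back using the definition of $S_f(\infty)$ given in the text.

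For uniqueness, suppose $S_f = S_g = \phi$ and pick an open neighborhood $U \subset D$ on which both $f$ and $g$ are holomorphic and $g$ is injective. Setting $h = f \circ g^{-1}$ on $g(U)$, the chain rule for the Schwarzian applied to $f = h \circ g$ yields $(S_h \circ g)(g')^2 = 0$; since $g' \neq 0$ by local injectivity, $S_h \equiv 0$ on $g(U)$, so $h$ is the restriction of some $M \in \mathrm{PSL}(2,\bb C)$. Thus $f \equiv M \circ g$ on $U$, and connectedness of $D$ together with uniqueness of analytic continuation force the same $M$ to work globally on $D$.

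The main obstacle I anticipate is the point at infinity: one must verify that the ODE method applied after the chart change $z \mapsto 1/z$ produces the correct value of the Schwarzian at $\infty$ in the sense of the definition given in the text, and that local injectivity transfers cleanly under this chart. By contrast, the ODE-to-Schwarzian identity and the uniqueness argument via the chain rule are routine computations once the setup is in place.
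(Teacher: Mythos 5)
Your proposal is correct and follows essentially the same route the paper takes: the paper cites \cite{UFaTS}, Thm.~II.1.1 and then sketches exactly this reduction, namely that solutions are quotients $h_1/h_2$ of linearly independent solutions of $s'' + \tfrac12\phi s = 0$, with change of basis corresponding to post-composition by a M\"obius transformation. The only point to tidy is the chart at $\infty$: rather than inverting $D$ itself (which need not land in $\bb C$ if $0\in D$), precompose with a M\"obius map sending some point of $\cinf\setminus D$ to $\infty$ and use the covariance $S_{f\circ T}=(S_f\circ T)(T')^2$, which is consistent with the paper's definition of $S_f(\infty)$.
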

\fussy
In fact, a solution $f$ to the equation $S_f = \phi$ can be written as $f=h_1/h_2$, where $h_i$ are two linearly independent solutions to the linear equation $s'' + 1/2 \phi s =0$, and a change of basis in the solution space corresponds precisely to post-composing $f$ with a M\"obius transformation.\\ \\
Let $\cal{S}(D) \subset \cal M_{li}(D)$ denote the set of \emph{injective holomorphic} functions. Elements of $\cal S(D)$ are called \emph{schlicht} functions, or by some authors also \emph{univalent} functions. A famous result on schlicht functions that we need later is the following.
\begin{thm}[Koebe's $\frac 14$-theorem]\label{K1QThm}
		Let $f$ be a schlicht function on the disc $\bb D$ that fixes the origin and with $f'(0)$ = 1. Then the image 	 				$f(\bb D)$ contains the disc $\bb D_{\frac 14}$.
\end{thm}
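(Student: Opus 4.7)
The plan is to deduce the $\tfrac14$-theorem from the coefficient bound $|a_2|\le 2$ for the class $S$ of normalized schlicht functions $f(z)=z+a_2z^2+a_3z^3+\cdots$ on $\bb D$, which in turn rests on the so-called area theorem for the class $\Sigma$ of schlicht functions $g(\zeta)=\zeta+b_0+b_1/\zeta+b_2/\zeta^2+\cdots$ on the exterior $\bb D^c$.

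First I would establish the area theorem: for $g\in\Sigma$, the complement of $g(\bb D^c)$ has non-negative area, and a direct computation using Green's theorem (applied to the image of the circle $|\zeta|=r>1$ and then letting $r\downarrow 1$) gives $\sum_{n\ge 1}n|b_n|^2\le 1$. In particular $|b_1|\le 1$. Next I would transfer this bound to the class $S$ via the classical square-root transform: for $f\in S$, the function $h(z):=\sqrt{f(z^2)}=z+\tfrac12 a_2 z^3+\cdots$ is a well-defined odd element of $S$ (well-definedness uses that $f$ vanishes only at $0$ on $\bb D$ together with the normalization), and then $g(\zeta):=1/h(1/\zeta)=\zeta-\tfrac12 a_2\zeta^{-1}+\cdots$ lies in $\Sigma$. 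Reading off $b_1=-\tfrac12 a_2$ and applying the area theorem yields $|a_2|\le 2$.

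With $|a_2|\le 2$ in hand, the $\tfrac14$-theorem is a one-line trick. Given $w_0\in\bb C\setminus f(\bb D)$, define
\[
F(z)\;:=\;\frac{w_0 f(z)}{w_0-f(z)}.
\]
Since $w_0$ is not attained by $f$, the Möbius factor $w\mapsto w_0 w/(w_0-w)$ is holomorphic on $f(\bb D)$, hence $F$ is schlicht on $\bb D$; a short check gives $F(0)=0$ and $F'(0)=1$, so $F\in S$. Expanding
\[
F(z)\;=\;f(z)+\frac{f(z)^2}{w_0}+\cdots\;=\;z+\Bigl(a_2+\frac{1}{w_0}\Bigr)z^2+\cdots,
\]
the coefficient bound applied to $F$ gives $|a_2+1/w_0|\le 2$, and combining with $|a_2|\le 2$ yields $|1/w_0|\le 4$, i.e.\ $|w_0|\ge \tfrac14$. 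Since this holds for every omitted value, $f(\bb D)\supset\bb D_{1/4}$.

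The main technical obstacle is the square-root step: one must argue that $f(z^2)$ admits a holomorphic square root with the right behaviour at the origin, which uses that $f$ is schlicht and vanishes only at $0$ and that $f(z^2)/z^2$ is non-vanishing on $\bb D$, so a branch of the logarithm, and hence of the square root, exists. Everything else is either the area-theorem computation (straightforward once one writes the area of $\bb C\setminus g(\{|\zeta|>r\})$ as a line integral) or an algebraic manipulation.
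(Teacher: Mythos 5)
Your argument is the standard Bieberbach proof of Koebe's $\frac14$-theorem (area theorem for the class $\Sigma$, the odd square-root transform giving $|a_2|\le 2$, then the omitted-value M\"obius trick), and it is correct; the only step worth spelling out slightly more is that $h(z)=\sqrt{f(z^2)}$ is not just well defined but injective, which follows from oddness: $h(z_1)=h(z_2)$ forces $z_1^2=z_2^2$, and $z_1=-z_2$ would give $h(z_2)=-h(z_2)=0$, hence $z_1=z_2=0$. The paper itself states this theorem as a classical fact without proof, so there is no internal argument to compare against.
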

To understand the image of the schlicht functions under the Schwarzian, we need to introduce some Banach spaces of holomorphic functions. Recall that any domain $D\subset \cinf$ such that the complement $\cinf \backslash D$ has at least three points admits a complete metric of constant negative curvature given by $\lambda^2_D |dz|^2$ and called the Poincar\'e metric. For example, for the unit disc, $\lambda^2_{\bb D}(z) = (1-|z|^2)^{-2}$.
\begin{dfn}
Let $D\subset \cinf$ be a domain that admits a Poincar\'e density $\lambda_D$. Then the space of \underline{bounded n-differentials}, denoted $B_n(D)$, is given by the holomorphic functions on $D$ for which the hyperbolic sup-norm
$$ \|f\|_{B_n(D)} := \mathrm{sup}_{z \in D} |f(z)| \lambda^{-n}_D(z)$$
is finite.
\end{dfn}
It is easy to see that this is a complex Banach space. A famous theorem by Kraus (rediscovered and often attributed to Nehari) bounds the $B_2$-norm of the Schwarzian of schlicht functions on the disc.
\begin{thm}[Kraus-Nehari Theorem]
Let f be a schlicht function on $\bb D$. Then
$$ |S_f(z)| (1-|z|^2)^2 \leq 6\;,$$
In particular, $\| S_f\|_{B_2(\bb D)} \leq 6$. This bound is sharp.
\end{thm}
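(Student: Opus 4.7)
The plan is to reduce the pointwise hyperbolic estimate to a single evaluation $|S_g(0)|\le 6$ via the Möbius invariance of the Schwarzian, and then to obtain that pointwise estimate from the classical area theorem of Gronwall.

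First I would fix an arbitrary $z_0 \in \bb D$ and introduce the disc automorphism $\phi_{z_0}(w) := (w+z_0)/(1+\bar z_0 w) \in \psl$, satisfying $\phi_{z_0}(0) = z_0$ and $\phi_{z_0}'(0) = 1 - |z_0|^2$. Setting $g := f \circ \phi_{z_0}$, which is again schlicht on $\bb D$, and applying the chain rule (\ref{trafoSchw}) together with $S_{\phi_{z_0}} \equiv 0$, one gets $S_g(0) = S_f(z_0)(1-|z_0|^2)^2$. Since $\lambda_{\bb D}^{-2}(z_0) = (1-|z_0|^2)^2$, the sought-after $B_2$-bound is equivalent to the single inequality $|S_g(0)| \leq 6$ for every schlicht $g$ on $\bb D$.

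Next, because post-composition with any Möbius (in particular any affine) map leaves the Schwarzian unchanged, I may replace $g$ by its affine Koebe normalization $g(z) = z + a_2 z^2 + a_3 z^3 + \cdots$ without altering $S_g$. A routine expansion from the definition yields $S_g(0) = 6(a_3 - a_2^2)$, so the whole theorem reduces to the coefficient inequality $|a_3 - a_2^2| \leq 1$.

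For that coefficient bound I would invoke Gronwall's area theorem applied to the inverted function $F(\zeta) := 1/g(1/\zeta)$, which is schlicht on the exterior $\{|\zeta|>1\}$, has a simple pole at infinity, and Laurent expansion $F(\zeta) = \zeta - a_2 + (a_2^2 - a_3)/\zeta + O(\zeta^{-2})$. Green's theorem applied to the complement of $F(\{|\zeta|>r\})$ and the non-negativity of planar area yield, in the limit $r \to 1^+$, the estimate $\sum_{n\geq 1} n|b_n|^2 \leq 1$ for the coefficients of $\zeta^{-n}$; keeping only $n=1$ gives $|a_3 - a_2^2| \leq 1$ and completes the bound. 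Sharpness is witnessed by the Koebe function $K(z) = z/(1-z)^2$, for which a direct computation gives $S_K(z) = -6/(1-z^2)^2$, so $|S_K(0)| = 6$ attains equality. The only genuinely non-elementary input in this chain is the area theorem itself, which is classical; the reduction and coefficient computation are routine, so I do not foresee a real obstacle.
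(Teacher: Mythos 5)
Your argument is correct and complete: the reduction via the disc automorphism $\phi_{z_0}$ and the chain rule \eqref{trafoSchw} correctly turns the hyperbolic bound into $|S_g(0)|\le 6$, the computation $S_g(0)=6(a_3-a_2^2)$ for a normalized schlicht $g$ is right, and the coefficient inequality $|a_3-a_2^2|\le 1$ does follow from Gronwall's area theorem applied to $F(\zeta)=1/g(1/\zeta)$, with sharpness witnessed by the Koebe function since $S_K(z)=-6/(1-z^2)^2$. Note that the paper states this theorem without proof, citing it as a classical result of Kraus; what you have written is precisely the standard classical proof (as in Lehto's book), so there is nothing to compare beyond confirming that your chain of reductions is the accepted one.
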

It is easy to obtain a bound for any simply-connected hyperbolic domain $D\subset \cinf$ via a Riemann mapping $\psi:D\rightarrow \bb D$. Let $f \in \cal S(D)$, so $f\circ\psi^{-1}$ is schlicht on $\bb D$ and
\begin{eqnarray*}
    S_{f\circ\psi^{-1}}&=& (S_f \circ \psi^{-1})(\partial_z\psi^{-1})^{2}
    + S_{\psi^{-1}}\\
    &=& \left((S_f -S_\psi)\circ
    \psi^{-1}\right)(\partial_z\psi^{-1})^{2}\;,
\end{eqnarray*}
where the last step is due to the fact that the Schwarzian of an inverse can expressed by the Schwarzian of the map itself with the help of the chain rule \eqref{trafoSchw} as follows:
$$ 0 = S_{\psi \circ \psi^{-1}} = (S_\psi \circ \psi^{-1})(\partial_z\psi^{-1})^{2}+S_{\psi^{-1}}\;.$$ 
Now, multiplying by the Poincar\'e density, using its transformation behaviour and taking absolute values, we get
   $$ \| S_f\|_{B_2(D)} \leq 12 \qquad \forall \; f\in \cal S(D)\;.$$
There is a converse statement to this fact for a class of domains called \emph{quasidiscs}, which are images of the unit disc under a quasiconformal homeomorphism $\cinf$, and in fact, this property characterizes quasidiscs.
\begin{thm}[Gehring] \label{SchlichtCrit} 
   Let $D\subset\cinf$ be a simply-connected hyperbolic domain. Then $D$ is a quasidisc iff there exists a constant 						 $\delta>0$ such that for all $f \in \cal M_{\mathrm{li}}(D)$, 
   $\|S_f\|_{B_2(D)} \leq \delta$ implies that $f \in \cal S(D)$.
\end{thm}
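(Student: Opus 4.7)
The plan is to prove both implications separately, following the classical approaches of Ahlfors (sufficiency) and Gehring (necessity).

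For the forward direction, I first reduce to the case $D = \bb D$ via a quasiconformal map between quasidiscs. Given $f \in \cal M_{\mathrm{li}}(\bb D)$ with $\|S_f\|_{B_2(\bb D)}$ small, the strategy is to construct a quasiconformal extension $F$ of $f$ to all of $\cinf$, since injectivity of $f$ then follows from that of $F$. Following Ahlfors-Weill, I use Prop.~\ref{SchwSol} to write $f = h_1/h_2$ for linearly independent solutions of $h'' + \tfrac{1}{2}S_f h = 0$, and extend $f$ to $\bb D^c$ by an explicit formula built from $h_1, h_2$ and the reflection $z \mapsto 1/\bar z$. A direct computation yields a pointwise bound $|\mu_F(z)| \leq \tfrac{1}{2}(1-|1/\bar z|^2)^2 |S_f(1/\bar z)|$ on the Beltrami coefficient of $F$ on $\bb D^c$; taking $\delta$ sufficiently small forces $\|\mu_F\|_\infty < 1$, so that $F$ is a quasiconformal homeomorphism of $\cinf$. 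For a general quasidisc $D$, the same argument is transported using a quasiconformal uniformization $\bb D \to D$ together with the chain rule \eqref{trafoSchw}.

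For the converse, assume such a $\delta > 0$ exists. The idea is to use the hypothesis to construct a quasiconformal reflection across $\partial D$, which by Ahlfors's characterization is equivalent to $D$ being a quasidisc. By Prop.~\ref{SchwSol}, every $\phi \in B_2(D)$ of norm less than $\delta$ is the Schwarzian of a meromorphic function $f_\phi$, which by hypothesis is schlicht. This produces a large open family of univalent maps depending holomorphically on $\phi$. Examining the boundary behavior of $f_\phi$ — using distortion theorems and the one-parameter family $\{f_{t\psi}\}_{|t|<1}$ — allows one to show that the induced boundary correspondences are quasisymmetric, from which the desired reflection is produced by a standard gluing construction.

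The principal obstacle lies in the converse direction: translating the purely analytic hypothesis (that a ball in $B_2(D)$ consists of Schwarzians of schlicht maps) into the geometric conclusion (existence of a quasiconformal reflection across $\partial D$) requires delicate control on the boundary behavior of solutions $f_\phi$ as $\phi$ varies, and is the technical heart of Gehring's theorem. The forward direction, though computationally involved, reduces once the Ahlfors-Weill extension formula is in hand to verifying a single pointwise Beltrami coefficient estimate.
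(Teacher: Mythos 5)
The paper itself gives no proof of this theorem --- it is quoted from the literature, with a pointer to \cite{UFaTS}, Section II.4.6 --- so your proposal can only be measured against the standard arguments, and as written both halves have gaps. The serious one is in the forward direction. You propose to reduce to $D=\bb D$ ``via a quasiconformal map between quasidiscs \ldots together with the chain rule \eqref{trafoSchw}''. But \eqref{trafoSchw} is a chain rule for \emph{holomorphic} changes of variable; the Schwarzian does not transform at all under a merely quasiconformal map, so a quasiconformal uniformization is useless here. If instead you use the conformal Riemann map $\psi:\bb D\rightarrow D$, then $S_{f\circ\psi}=(S_f\circ\psi)(\psi')^{2}+S_\psi$, and while the first term has $B_2(\bb D)$-norm equal to $\|S_f\|_{B_2(D)}\leq\delta$, the additive term $S_\psi$ has norm as large as $6$ by Kraus--Nehari, so $f\circ\psi$ does \emph{not} have small Schwarzian and the disc case cannot be applied. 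Ahlfors's actual proof makes no reduction: it runs the Ahlfors--Weill construction directly on $D$, replacing the reflection $z\mapsto 1/\bar z$ by a Lipschitz quasiconformal reflection $h$ across $\partial D$ (whose existence is precisely what being a quasidisc buys you), and the Beltrami estimate for the extension then involves the Lipschitz constants of $h$ --- which is why $\delta$ depends on $D$. One could alternatively argue that $S_\psi$ is an interior point of $\bb S$ and $S_{f\circ\psi}$ lies within $\delta$ of it, invoking Theorem \ref{IntClos}; but the openness of $T(\bb 1)$ is itself a consequence of the quasiconformal-reflection argument, so it cannot be taken as free input without circularity.

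The converse half is stated too vaguely to count as an outline. Saying that one ``examines the boundary behavior of $f_\phi$'' and shows the ``boundary correspondences are quasisymmetric'' renames the difficulty rather than addressing it. Gehring's argument derives from the hypothesis the Ahlfors bounded-turning (three-point) condition on $\partial D$: one first shows $D$ must be a Jordan domain, and then, for pairs of boundary points, produces explicit test functions whose Schwarzians can be estimated below $\delta$ and whose forced injectivity constrains the relative positions of points of $\partial D$; bounded turning then characterizes quasicircles. Until you name a concrete family of test functions and the geometric quantity their univalence controls, this direction remains a statement of intent.
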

A proof can be found in \cite{UFaTS}, Section II.4.6.\\ \\
Recall that in the previous section we defined a model of the Teichm\"uller space of a Fuchsian group $G$ within the space of schlicht functions.  If $G=\bb 1$, $\cal T_S(\bb 1)$ is called the \emph{universal Teichm\"uller space}, and its image under the Schwarzain in $B_2(\bb D)$ will be denoted by $T(\bb 1)$. Also, let us denote the image of all schlicht functions on the disk under the Schwarzian by $\bb S$. The following highly interesting theorem can be found in \cite{TCAToTS}.
\begin{thm} [Gehring] \label{IntClos}
    The interior of $\bb S$ is precisely $T(\bb 1)$, but the closure of $T(\bb 1)$ is a proper subset of
    $\bb S$.
\end{thm}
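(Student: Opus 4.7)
\emph{Plan.} The statement splits into three inclusions: $T(\bb 1) \subset \mathrm{int}(\bb S)$, $\mathrm{int}(\bb S) \subset T(\bb 1)$, and $\overline{T(\bb 1)} \subsetneq \bb S$. The first two pivot on the Schwarzian chain rule \eqref{trafoSchw}. Combined with the conformal transformation law of the hyperbolic density, this rule makes pullback by a schlicht $f$ an isometry $B_2(f(\bb D)) \to B_2(\bb D)$, $\chi \mapsto (\chi \circ f)(f')^2$. Under this isometry a neighbourhood of $S_f$ in $B_2(\bb D)$ corresponds bijectively to a neighbourhood of $0$ in $B_2(f(\bb D))$, and $\psi$ near $S_f$ lies in $\bb S$ precisely when the unique-up-to-M\"obius solution $g$ of $S_g = \bigl((\psi - S_f) \circ f^{-1}\bigr)((f^{-1})')^2$ provided by Proposition \ref{SchwSol} is schlicht on $f(\bb D)$. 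By Theorem \ref{SchlichtCrit}, this schlichtness is available for \emph{every} sufficiently small perturbation if and only if $f(\bb D)$ is a quasidisc.

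\emph{Both inclusions at once.} If $f \in \cal T_S(\bb 1)$, then $f$ admits a quasiconformal extension to $\cinf$ so $f(\bb D)$ is a quasidisc; let $\delta > 0$ be the constant from Theorem \ref{SchlichtCrit} applied to $f(\bb D)$. Any $\psi$ with $\|\psi - S_f\|_{B_2(\bb D)} < \delta$ transports via the isometry to a $\chi \in B_2(f(\bb D))$ of norm $<\delta$, the resulting $g$ is schlicht, so $g \circ f$ is a schlicht function on $\bb D$ with Schwarzian $\psi$, placing $\psi \in \bb S$. Hence $S_f \in \mathrm{int}(\bb S)$. For the reverse inclusion, given $S_f \in \mathrm{int}(\bb S)$, run the correspondence backwards: every $\chi \in B_2(f(\bb D))$ of small enough norm comes from a $\psi$ near $S_f \in \bb S$ and so yields a schlicht $g$, and Theorem \ref{SchlichtCrit} then forces $f(\bb D)$ to be a quasidisc, i.e., $f \in \cal T_S(\bb 1)$.

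\emph{Strict inclusion.} First I would verify that $\bb S$ is closed in $B_2(\bb D)$: a convergent sequence $S_{f_n} \to \phi$ in $B_2(\bb D)$ satisfies a uniform Kraus--Nehari bound, and the 1-point normalization together with Koebe's distortion theorem make $\{f_n\}$ a normal family on compacta, so a locally uniform subsequential limit $f$ is schlicht with $S_f = \phi$. Consequently $\overline{T(\bb 1)} = \overline{\mathrm{int}(\bb S)} \subset \bb S$, and the remaining task is to exhibit a single $\phi \in \bb S \setminus \overline{T(\bb 1)}$. This is where I expect the main obstacle: one must construct a schlicht $f$ whose image is so pathologically non-quasidisc -- classically, a simply connected domain with a sufficiently sharp outward-pointing cusp on its boundary -- that \emph{no} $B_2(\bb D)$-neighbourhood of $S_f$ meets $T(\bb 1) = \mathrm{int}(\bb S)$. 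The argument verifies, again via the isometry and Proposition \ref{SchwSol}, that any small Schwarzian perturbation of such an $S_f$ produces a function whose image inherits the cusp and thus fails to be a quasidisc; by the first half of the theorem none of these perturbations lies in $T(\bb 1)$, which isolates $S_f$ from $\overline{T(\bb 1)}$. Establishing this geometric rigidity of bad boundary behaviour under $B_2$-perturbations is the technical heart of the statement.
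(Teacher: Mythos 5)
Your treatment of the first claim is correct and is exactly the route the paper indicates: the conjugation isometry $B_2(f(\bb D)) \to B_2(\bb D)$ together with Proposition \ref{SchwSol} reduces both inclusions to the two directions of Theorem \ref{SchlichtCrit}, and the paper itself only remarks that this part follows easily from that theorem.

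The gap is in the strict inclusion. What you must produce is a simply connected domain $D$ with the property that there exists $\delta>0$ such that \emph{every} locally injective meromorphic $g$ on $D$ with $\|S_g\|_{B_2(D)}<\delta$ has an image that fails to be a quasidisc; only then does the transported $\delta$-ball around $S_h$, where $h$ is a Riemann map onto $D$, avoid $T(\bb 1)$ entirely. You propose a domain with a sharp outward cusp and assert that small Schwarzian perturbations ``inherit the cusp.'' This is the step that fails, or at least is nowhere near routine: a perturbation that is small in the $B_2$-norm is small only relative to the hyperbolic density, which blows up at the boundary, so such perturbations can alter the boundary geometry drastically --- they can in principle round off a cusp, and you give no argument that they cannot. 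The domains that actually work, as the paper notes in its discussion following the theorem, are complements $\cinf \backslash \gamma$ of special spiral arcs; for these Gehring proves the stronger statement that every $g$ with $\|S_g\|_{B_2(D)}<\delta$ has an image that is not even a Jordan domain, because the unbounded winding of the spiral persists under all such perturbations. That persistence is the genuinely hard content of the second half of the theorem, and your proposal defers it entirely (your ``technical heart'') while pointing at a construction that is not known to have the required rigidity. Separately, your closedness claim for $\bb S$ needs minor care --- solutions of $S_g=\phi$ are a priori only in $\cal M_{\mathrm{li}}(\bb D)$, so the normal-family limit must be taken there and injectivity of the limit checked via Hurwitz --- but that part is standard.
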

Let us only remark that the first claim follows quite easily from Theorem \ref{SchlichtCrit} while the second, more surprising statement is much harder. Gehring succeeded in the proof by explicitly constructing domains $D$ given by $\cinf \backslash \gamma$, where $\gamma$ are special spiral arcs which have in some sense the opposite property: there exists a $\delta > 0$ such that for all schlicht functions $f$ on $D$ with $\|S_f\|_{B_2(D)}< \delta$, $f(D)$ is \emph{not a Jordan domain}. The theorem then follows easily.
\subsection{The Bers Embedding}
The schlicht model of Teichm\"uller space is a model in the the category of sets, which is yet not very satisfactory. The Bers embedding realizes this model as a domain in a complex Banach space.
\begin{dfn}\label{BersEmbeddingDef}
   Let $G$ be a Fuchsian group acting on $\bb D$. The \underline{Bers em-} \underline{bedding} $\beta$ of Teichm\"uller space is         given by
   $$ \beta: \cal T_B(G) \rightarrow B_2(\bb D, G)\;, \qquad \beta = S \circ \pi_{BS}\;.$$
   The image $\beta(\cal T_B(G))$ will be denoted by $T(G)$. The lift of this map,
   $$\tilde \beta : \bb B_1\left(L^\infty_{(-1,1)}(\bb D^c, G)\right) \rightarrow  B_2(\bb D, G)\;, \qquad \tilde \beta := \beta \circ \pi_T\;,$$ is 				 called the \underline{Bers projection}.
\end{dfn}
The target spaces in the theorem have not been introduced yet. As the notation suggests, $B_n(\bb D,G)$ is a subset of $B_n(\bb D)$. It consists of precisely those functions that satisfy
$$ (f\circ g)(z) (g')^n(z) = f(z) \qquad \forall g\in G, z \in \bb D\;.$$
That this indeed is the target is a special case of Proposition \ref{DefQDCorr} below. Geometrically, this space can be identified with the space of holomorphic sections of the $n$-th tensor power of the canonical bundle of the surface $\Sigma \cong \bb D/G$ in the cocompact case. In the case of a punctured surface $\Sigma$ of type $(g,p)$ one has to consider twisting the canonical bundle with a divisor coming from the punctures. The dimension of this space is well-known to be (see, e.g., \cite{AFaKG}) 
$$ \mathrm{dim}_{\bb C} B_n(\bb D, G) = (2n-1)(g-1) + pn\;.$$
These cases correspond to Fuchsian groups of first kind (in case the group of first kind and has elliptic elements there is a further finite term coming from the ramifications, but we don't need the explicit formula in what follows). For Fuchsian groups of second kind, so in particular for $G=\bb 1$, the space is infinite dimensional.\\ \\
But we still need to justify calling the map $\beta$ an \emph{embedding}. The following beautiful theorem is due to Bers. 
\begin{thm} \label{BembThm}
    Let $G$ be a Fuchsian group acting on $\bb D$. The Bers projection $\tilde \beta$
    is a holomorphic submersion and factors precisely through $\pi_T$ yielding a holomorphic embedding of $\cal T_B(G)$     into $B_2(\bb D, G)$ as an open, bounded and contractible domain.
\end{thm}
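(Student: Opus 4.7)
The plan is to establish, in order: (i) $\tilde\beta$ takes values in $B_2(\bb D,G)$ with bounded image, (ii) $\tilde\beta$ is holomorphic, (iii) $\tilde\beta$ factors precisely through $\pi_T$ so the induced $\beta$ is well-defined and injective, (iv) $\tilde\beta$ is a submersion, so $\beta$ is an open topological embedding, and (v) the image is contractible. Most steps are formal consequences of the Schwarzian transformation law \eqref{trafoSchw} and the Ahlfors--Bers parameter-dependence theorem; the analytic heart of the argument is the surjectivity of $d\tilde\beta_0$.

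For (i), $G$-equivariance of $\mu$ means $w^\mu$ conjugates $G$ into a Kleinian group, so for each $g\in G$ there is $\tilde g\in\psl$ with $w^\mu\circ g=\tilde g\circ w^\mu$. Applying \eqref{trafoSchw} to both sides and using $S_g=S_{\tilde g}=0$ yields $(S_{w^\mu}\circ g)(g')^2=S_{w^\mu}$ on $\bb D$, i.e., $S_{w^\mu}\in B_2(\bb D,G)$, while Kraus--Nehari gives $\|\tilde\beta(\mu)\|_{B_2(\bb D)}\leq 6$. For (iii), $\mu\sim\nu$ means $w^\mu=w^\nu$ on $\partial\bb D$, hence on $\bb D$ by continuity up to the boundary, so the Schwarzians agree; conversely, equal Schwarzians force $w^\mu=M\circ w^\nu$ on $\bb D$ for some $M\in\psl$ by Proposition~\ref{SchwSol}, and the $1$-point normalization (matching the three complex parameters of $\psl$) forces $M=\mathrm{id}$, so the boundary values agree and $\mu\sim\nu$.

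For (ii) one invokes holomorphic dependence of the normalized Beltrami solution on the parameter: $\mu\mapsto w^\mu|_{\bb D}$ is holomorphic into a suitable space of normalized holomorphic functions on $\bb D$, and post-composition with the Schwarzian (continuous into $B_2(\bb D)$ by Kraus--Nehari) preserves holomorphicity. For (iv), the change-of-basepoint biholomorphism identifying $\cal T_B(G)$ near any $\mu_0$ with $\cal T_B(w^{\mu_0}G(w^{\mu_0})^{-1})$ near the origin, in a manner intertwining the Bers embeddings, reduces the problem to surjectivity of $d\tilde\beta_0$ for every Fuchsian group. Expanding $w^{t\nu}=\mathrm{id}+tw_1^\nu+O(t^2)$ one finds $S_{w^{t\nu}}=t(w_1^\nu)'''+O(t^2)$, so $d\tilde\beta_0(\nu)=\partial^3 w_1^\nu$; inserting \eqref{1stOrderBeltSol} and using the partial-fraction identity $\frac{z(z-1)}{w(w-1)(w-z)}=\frac{1}{w-z}-\frac{z+w-1}{w(w-1)}$, whose polynomial part drops after three derivatives, one obtains
\be \label{BersDiff}
d\tilde\beta_0(\nu)(z)=-\frac{6}{\pi}\int_{\bb D^c}\frac{\nu(w)}{(w-z)^4}\,d^2w, \qquad z\in\bb D.
\ee
Surjectivity of this integral operator onto $B_2(\bb D,G)$ is the classical Ahlfors--Bers projection theorem, proved by pairing with the predual space of integrable automorphic quadratic differentials via the Petersson inner product.

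Combining (ii)--(iv), $\tilde\beta$ is an open holomorphic map, and $\beta$ is a continuous injection onto an open subset of $B_2(\bb D,G)$ with continuous inverse, hence a topological embedding; boundedness is (i). For (v), the linear contraction $\mu\mapsto t\mu$ of the convex ball $\bb B_1(L^\infty_{(-1,1)}(\bb D^c,G))$ descends through $\beta$ to a contraction of $T(G)$ onto $\beta(0)=0$. The main obstacle is surjectivity of the kernel operator in \eqref{BersDiff}; everything else is either a formal consequence of \eqref{trafoSchw} or a standard output of Ahlfors--Bers theory.
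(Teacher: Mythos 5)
The paper itself offers no proof of Theorem \ref{BembThm}; it defers to \cite{TCAToTS} and only reproves the holomorphicity and submersion parts, in much greater generality, in Theorems \ref{higherBersholomorphic}, \ref{ThmDifferential} and \ref{HBDiffSurj}. Measured against that classical outline, your steps (i)--(iv) are essentially right: the equivariance plus Kraus--Nehari argument in (i), the injectivity in (iii) via Proposition \ref{SchwSol} and the $1$-point normalization, and the computation $d\tilde\beta_0(\nu)=\partial^3 w_1^{\nu}=-\frac{6}{\pi}\int_{\bb D^c}\nu(w)(w-z)^{-4}\,d^2w$ all check out (the last agrees with Theorem \ref{ThmDifferential} for $Q=S$). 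Two steps are terser than they should be but repairable: in (ii), ``post-composition with the Schwarzian preserves holomorphicity'' needs the criterion that weak holomorphicity plus local boundedness implies holomorphicity, and it is precisely the uniform Kraus--Nehari bound that supplies the local boundedness here (this is the step that costs the paper two lemmas in the general setting of Theorem \ref{higherBersholomorphic}); in (iv), the change of base point conjugates $G$ into a quasi-Fuchsian group acting on a \emph{quasidisc}, so one needs surjectivity of the kernel operator over quasidiscs, which is exactly what Bers' reproducing formula (Theorem \ref{RepForm}) provides.

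The genuine gap is step (v). You claim the radial contraction $\mu\mapsto t\mu$ of the unit ball ``descends through $\beta$'' to a contraction of $T(G)$. For that you would need $\mu\sim\nu$ to imply $t\mu\sim t\nu$ for all $t\in[0,1]$, i.e., that the fibres of the Bers projection are preserved by scaling of Beltrami coefficients. Nothing you wrote establishes this, and it is false: the equivalence relation is defined through the boundary values of the highly nonlinear solution operator $\mu\mapsto w^\mu$, and agreement of boundary values at $t=1$ does not propagate to intermediate $t$ --- already the fibre over the origin (the set of trivial coefficients) is not star-shaped about $0$. This is exactly why contractibility is the one part of the theorem that does not fall out of the submersion machinery, and why the paper explicitly excludes it (together with injectivity) from what its later results reprove. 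The classical repairs both amount to choosing a \emph{canonical representative} of each class before scaling: for finite-dimensional $T(G)$, the unique Teichm\"uller extremal $k|\phi|/\phi$ with $k\mapsto tk$ (Earle--Eells); in general, the Douady--Earle conformally natural, hence $G$-equivariant, barycentric extension of the boundary homeomorphism, whose dilatation one then scales. Without such a canonical choice your homotopy is not well defined on the quotient.
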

For a detailed exposition and full proof we refer to the textbook \cite{TCAToTS}. On the other hand, the proofs of our main theorems later, which generalize this theorem considerably, reprove everything except the injectivity and contactibility.\\ \\
The theorem has a wealth of consequences. Recall that by Theorem \ref{SchlichtCrit}, there exists a constant $\delta$ such that the $\delta$-ball in $B_2(\bb D)$ around the origin is contained in $\bb S$, hence the $\delta$-ball around the origin in $B_2(\bb D, G)$ is also contained in $\bb S \cap B_2(\bb D, G)$. Further, by Theorem \ref{IntClos}, 
$$ T(G) = T(\bb 1) \cap B_2(\bb D, G) = \textrm{int}(\bb S) \cap B_2(\bb D, G) \supset \bb D_\delta\;,$$
so $ \mathrm{dim}_{\bb C} T(G) = \mathrm{dim}_{\bb C} B_2(\bb D, G)$.\\ \\
Let us say a few more words about the geometry of images $T(G)$ for they are very intriguing and not yet fully understood. First of all, the constant $\delta$ in Theorem \ref{SchlichtCrit} depends on the quasidisc $D$; for $D=\bb D$ it is well-known to have the value $2$. In fact, for bounded differentials of norm less than two, one can write down the inverse of the lifted Bers embedding explicitly. This goes by the name of the Ahlfors-Weill section (see below).\\ \\
However, observe that if we restrict to the intersection $\bb S \cap B_2(\bb D, G)$ there could be a bigger ball around the origin. In general, let us define the two quantities,
$$ i(G) := \sup_{\delta \in \bb R} \{\bb D_\delta \subset T(G)\}\;, \qquad o(G):= \inf_{\delta \in \bb R} \{ \bb D_\delta \supset T(G)\}\;,$$
called the \emph{inradius} resp.~ the \emph{outradius} of the Teichm\"uller space $T(G)$. By the facts on schlicht functions, we have $i(G)\geq 2$ and $o(G)\leq 6$. The following facts concerning the in- resp.~ outradius can be found in \cite{OIRoTS} resp.~ \cite{OtOotTS}: $i(G)$ is strictly greater than two for any finitely generated Fuchsian group $G$ of first kind but there exists a sequence of quasiconformal deformations\footnote{A quasiconformal deformation of a group $G$ is given by $G':=wG w^{-1}$ where $w$ is a quasiconformal homeomorphism of $\cinf$ the dilatation of which (restricted to $\bb D$) is in $L^\infty_{(-1,1)}(\bb D,G)$.} $\{G_i\}$ of $G$ such that $i(G_i)\rightarrow 2$ for $i\rightarrow \infty$. $o(G)$ equals $6$ for Fuchsian groups of second kind and $o(G)$ is strictly less than $6$ for finitely generated Fuchsian groups of first kind. Yet given a finitely generated Fuchsian group of first kind $G$, and again, there exists a sequence $\{G_i\}$ of quasiconformal deformations of $G$ such that $o(G_i)\rightarrow 6$ for $i\rightarrow \infty$. Beware, however, that these facts don't give much information on $\bb S(G) := \bb S \cap B_2(\bb D,G)$ since $\mathrm{cl}\: (T(G)) \neq \bb S(G)$.\\ \\
Now although the inradii of Teichm\"uller spaces are always greater or equal to two, it is perhaps a little bit surprising that a quasiconformal homeomorphism in the equivalence class corresponding to any $\phi \in B_2(\bb D,G)$ of norm $<2$ can be constructed explicitly. By taking its dilatation we obtain the so-called Ahlfors-Weill section $s$, which is a section of the projection $\tilde \beta = \beta \circ \pi_T$.
\begin{thm}\label{AhlWeil}
   The map
   $$ s: B_2(\bb D, G) \rightarrow L^\infty_{(-1,1)}(\bb D^c, G)\;, \qquad \phi(z) \mapsto -\frac{1}{2} \phi(1/\bar
   z)(1-|z|^2)^2\bar z^4\;,$$
   is a holomorphic right inverse of the Bers projection when restricted to the open ball of radius 2 in $B_2(\bb D, G)$.
\end{thm}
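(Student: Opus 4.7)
Since $s$ is manifestly \emph{linear} in $\phi$, its holomorphicity as a map of Banach spaces reduces to a bound on the sup-norm, so once I establish the estimate $\|s(\phi)\|_\infty \leq \tfrac{1}{2}\|\phi\|_{B_2}$ together with the correct $G$-equivariance, both the target-space claim and holomorphicity are dealt with. The substance of the theorem is then the identity $\tilde\beta \circ s = \mathrm{id}$ on the $2$-ball. My plan is to exhibit, for each $\phi$ with $\|\phi\|_{B_2}<2$, an explicit quasiconformal homeomorphism $F_\phi : \cinf \to \cinf$ whose Beltrami coefficient is $s(\phi)$ on $\bb D^c$ and $0$ on $\bb D$, and whose restriction to $\bb D$ has Schwarzian $\phi$. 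Uniqueness of the normalized Beltrami solution modulo $\psl$ will then identify $F_\phi$ with $w^{s(\phi)}$ up to a Möbius factor, and $\tilde\beta(s(\phi)) = S_{F_\phi|_{\bb D}} = \phi$ follows.

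To construct $F_\phi$ I would follow Ahlfors and Weill. Pick two linearly independent holomorphic solutions $\eta_1, \eta_2$ of the linear equation $\eta'' + \tfrac{1}{2}\phi\,\eta = 0$ on $\bb D$; by the remark after Proposition \ref{SchwSol} their ratio $f := \eta_1/\eta_2$ satisfies $S_f = \phi$, and by Theorem \ref{SchlichtCrit}, whose sharp constant is $\delta = 2$ for the disc (noted in the text above), $f$ is in fact schlicht. Extend $f$ to $\cinf$ by setting, with $z^* := 1/\bar z$,
\[
   F_\phi(z) \;:=\; \frac{\eta_1(z^*) + (z-z^*)\,\eta_1'(z^*)}{\eta_2(z^*) + (z-z^*)\,\eta_2'(z^*)}, \qquad z \in \bb D^c.
\]
Since $z^* = z$ on $\partial \bb D$, $F_\phi$ is continuous across the circle, smooth off of it, and ACL on $\cinf$. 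A direct computation, using $\eta_i'' = -\tfrac{1}{2}\phi\,\eta_i$ and the constancy of the Wronskian $W = \eta_1\eta_2' - \eta_1'\eta_2 \neq 0$, collapses $\bar\partial F_\phi / \partial F_\phi$ on $\bb D^c$ to $\tfrac{1}{2}\phi(z^*)(z-z^*)^2\,\partial_{\bar z} z^*$; substituting $\partial_{\bar z} z^* = -1/\bar z^2$ this equals $s(\phi)(z)$. The substitution $w = z^* \in \bb D$ together with the definition of $\|\phi\|_{B_2}$ then yields $|s(\phi)(z)| \leq \tfrac{1}{2}\|\phi\|_{B_2}$, which is strictly below $1$ on the $2$-ball. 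The $G$-equivariance of $s(\phi)$ falls out of the identity $g(1/\bar z) = 1/\overline{g(z)}$ for Möbius $g \in G$, combined with the $2$-differential transformation law $\phi(gw)(g'(w))^2 = \phi(w)$.

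With $F_\phi$ in hand, the Beltrami equation $\bar\partial F_\phi = s(\phi)\,\partial F_\phi$ holds a.e.\ on $\cinf$ with dilatation of $L^\infty$-norm strictly less than $1$; by uniqueness of the normalized Beltrami solution modulo $\psl$ one concludes $F_\phi = M \circ w^{s(\phi)}$ for some $M \in \psl$, whence $\tilde\beta(s(\phi)) = S_{w^{s(\phi)}|_{\bb D}} = S_{F_\phi|_{\bb D}} = S_f = \phi$. The delicate step I expect to require the most care is upgrading the formal formula for $F_\phi$ to a genuine homeomorphism: one must verify that the denominator $\eta_2(z^*) + (z-z^*)\eta_2'(z^*)$ does not vanish on $\bb D^c$ (so that $F_\phi$ is well-defined and smooth there), and that $F_\phi$ is absolutely continuous on lines across $\partial \bb D$ so that the Beltrami equation holds distributionally on the whole sphere rather than merely on the two open halves. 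Both conditions hinge on the quantitative bound $\|\phi\|_{B_2} < 2$, reflecting the fact (recalled in the excerpt) that the inradius of the universal Teichmüller space for the disc equals exactly $2$.
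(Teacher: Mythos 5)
The paper itself offers no proof of Theorem \ref{AhlWeil}: the statement is quoted from the literature (see \cite{TCAToTS}, Sect.~3.8.3), so there is nothing internal to compare against. Your proposal is the classical Ahlfors--Weill argument, and the parts you actually carry out are correct: holomorphicity via $\bb C$-linearity plus the bound $\|s(\phi)\|_\infty\le\frac12\|\phi\|_{B_2}$, the Wronskian computation of $\bar\partial F_\phi/\partial F_\phi$ on $\bb D^c$, and the $G$-equivariance from $g(1/\bar z)=1/\overline{g(z)}$. (Your computation produces $\bar z^{-4}$, not the $\bar z^{4}$ printed in the statement; the paper's exponent is a typo and your version is the standard one, as the norm estimate confirms. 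It is also worth observing that $s(\phi)=-\frac{\pi}{6}\nu^2_\phi$ for the reflection $h(z)=1/\bar z$ in Theorem \ref{RepForm}, which is why $s$ is simultaneously a section of $D_0\tilde\beta$.)

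The genuine gap is the step you defer at the end, and it is the crux of the theorem rather than a technicality. Moreover you have slightly misidentified what needs proving there. Non-vanishing of the denominator $u_2:=\eta_2(z^*)+(z-z^*)\eta_2'(z^*)$ is not required and in general fails: since $u_2\eta_1(z^*)-u_1\eta_2(z^*)=(z-z^*)W\neq0$ off $\partial\bb D$, the numerator and denominator never vanish together, so $F_\phi=u_1/u_2$ is always a well-defined smooth map into $\cinf$ with at worst simple poles, and $\partial F_\phi=-W/u_2^2\neq0$ together with $\|s(\phi)\|_\infty<1$ already makes it an orientation-preserving local homeomorphism off the circle. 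What must actually be proved --- and what consumes the hypothesis $\|\phi\|_{B_2}<2$ in an essential second way --- is that $F_\phi$ is \emph{globally} injective on $\bb D^c$ and fits together with $f$ to a homeomorphism of $\cinf$; only then does the uniqueness clause of the measurable Riemann mapping theorem identify $F_\phi$ with $M\circ w^{s(\phi)}$. This requires its own idea: Ahlfors and Weill obtain it by applying the univalence criterion a second time to an auxiliary function built from $\eta_1,\eta_2$; alternatively one can argue that a local homeomorphism of the compact, simply connected $\cinf$ to itself is a covering and hence a homeomorphism, but then local injectivity \emph{across} $\partial\bb D$ must still be established, which is not automatic. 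Asserting that injectivity ``hinges on the bound'' without indicating the mechanism leaves the main content of the theorem unproved. By contrast, the ACL/removability issue along the analytic arc $\partial\bb D$ is a standard lemma and is fine to cite.
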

Let us point out here that this section is real-analytic. This implies in particular that the normalized solutions $w^{s(\phi)}$ are real analytic in $\bb D$ and $\bb D^c$. Also, as one would expect at this point, there exists an Ahlfors-Weill section for any quasidisc $D$ (see, e.g., \cite{TCAToTS}, Sect. 3.8.3) \\ \\
Maybe the most important feature of the Bers embedding (or more precisely, of the images of Teichm\"uller spaces under the Bers embedding) is that boundary points of Teichm\"uller space have extrinsic meaning, and this leads to a geometric understanding of degenerations of Riemann surfaces resp.~ Fuchsian groups. Since this is a further motivation for our construction of holomorphic mappings in the spirit of the Bers embedding, we want to present this enlarged framework briefly. The notion of the deformation space of a Fuchsian group was introduced by Kra in \cite{DoFG}, \cite{DoFG2}.
\begin{dfn}
   Let G be a Fuchsian group acting on $\bb D$. A \underline{deformation} of G is a pair $(\chi,f)$ where 								 $\chi:G\rightarrow \mathrm{PSL}(2,\bb
   C)$ is a homomorphism and $f: \bb D \rightarrow \cinf$ a locally injective meromorphic function that satisfies the
   compatibility equation $$ f \circ g = \chi(g) \circ f\;, \qquad \forall g \in G\;.$$
   Two deformations $(\chi_1, f_1)$ and $(\chi_2, f_2)$ are called equivalent iff
   $$ \exists M \in \mathrm{PSL}(2, \bb C): \qquad f_2 = M \circ f_1\:, \qquad \chi_2(g) = M \circ \chi_1(g) \circ
   M^{-1} \;.$$
   The set of all equivalence classes of deformations of $G$ is denoted by $\mathrm{Def}(G)$ and is called the 						 \underline{deformation space} of 		 	 the Fuchsian group $G$.
\end{dfn}
Observe also that the data of the definition is somewhat redundant: $f$ determines $\chi$, since by local injectivity, for any point $z \in f(\bb D)$ there is a neighborhood $U_z$ where $f$ is invertible. Hence
$$ \chi(g) (w) = (f \circ g \circ f^{-1})(w) \qquad \forall \: w \in U_z\;,$$
and this determines $\chi(g)$ completely, since a M\"obius transformation is characterized by its value on three points. But also conversely, any $f \in M_{\mathrm{li}}(D)$ \emph{is} the developing map of a deformation as we will see in the proof of \ref{DefQDCorr}. With this in mind, we will often identify functions in $\cal M_{\textrm{li}}(\bb D)$ with the corresponding deformations $(f, \chi)$ they
induce. For clarity in later use we will denote the forgetful maps by 
\begin{eqnarray*} 
\begin{array}{ll}
   \mathrm{dev}: \textrm{Def}(G) \rightarrow \cal M^0_{\textrm{li}}(\bb D)\;,& \qquad \mathrm{dev}([f,\chi]) = \tilde f\\
   \mathrm{hom}: \textrm{Def}(G) \rightarrow \mathrm{Hom}(G,\mathrm{PSL}(2,\bb C))\;,& \qquad \mathrm{hom}([f,\chi]) = \tilde \chi\;.
\end{array}
\end{eqnarray*}
Let's return to the deformation spaces of Fuchsian groups. These will now be given the structure of a complex vector space.
\begin{prop}\label{DefQDCorr}
   There following map is bijective and hence induces a $1-1$ correspondence,
   $$c_G: \mathrm{Def}(G)\rightarrow\cal Q(G):=\{f \in \cal O(\bb D): f = (f\circ g) (g')^2 \; \forall \: g \in G\}    \qquad c_G:= S \circ \mathrm{dev}\;,$$
   between the equivalence classes of deformations of $G$ and the quadratic differentials $\cal Q(G)$ for $G$. 
\end{prop}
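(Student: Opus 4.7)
The plan is to use the theory of the Schwarzian equation (Proposition \ref{SchwSol}) in an essentially direct way: the Schwarzian already encodes $f$ up to post-composition by M\"obius transformations, which is precisely the equivalence relation on deformations. The argument naturally splits into well-definedness, injectivity and surjectivity.

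First, I would verify that $c_G$ is well-defined and lands in $\cal Q(G)$. If $(\chi_1,f_1)\sim(\chi_2,f_2)$ with $f_2=M\circ f_1$ for some $M\in\mathrm{PSL}(2,\bb C)$, then the chain rule \eqref{trafoSchw} together with $S_M\equiv 0$ gives $S_{f_2}=S_{f_1}$, so the Schwarzian depends only on the equivalence class. To see that $S_f$ is in $\cal Q(G)$, take the Schwarzian of both sides of the compatibility relation $f\circ g=\chi(g)\circ f$ and apply \eqref{trafoSchw}: the left-hand side becomes $(S_f\circ g)(g')^{2}+S_g=(S_f\circ g)(g')^{2}$ since $g\in\mathrm{PSL}(2,\bb R)$, while the right-hand side becomes $(S_{\chi(g)}\circ f)(f')^{2}+S_f=S_f$ since $\chi(g)\in\mathrm{PSL}(2,\bb C)$. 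Comparing gives exactly the defining invariance relation for $\cal Q(G)$.

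For injectivity, suppose $c_G([\chi_1,f_1])=c_G([\chi_2,f_2])$, i.e.~$S_{f_1}=S_{f_2}$. By Proposition \ref{SchwSol} we immediately get $f_2=M\circ f_1$ for some $M\in\mathrm{PSL}(2,\bb C)$. Substituting $f_2=M\circ f_1$ into $f_2\circ g=\chi_2(g)\circ f_2$ and using $f_1\circ g=\chi_1(g)\circ f_1$ forces $\chi_2(g)=M\circ\chi_1(g)\circ M^{-1}$, so the two deformations are indeed equivalent.

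Surjectivity is the step where the slightly nontrivial content sits, but it is still a direct corollary of Proposition \ref{SchwSol}. Given $\phi\in\cal Q(G)$, choose any $f\in\cal M_{\mathrm{li}}(\bb D)$ with $S_f=\phi$. For each $g\in G$, the chain rule and the invariance $\phi=(\phi\circ g)(g')^{2}$ yield $S_{f\circ g}=(S_f\circ g)(g')^{2}+S_g=\phi=S_f$, hence by the uniqueness clause of Proposition \ref{SchwSol} there is a unique $M_g\in\mathrm{PSL}(2,\bb C)$ with $f\circ g=M_g\circ f$. Set $\chi(g):=M_g$. The fact that $\chi$ is a group homomorphism follows from $f\circ(g_1 g_2)=\chi(g_1 g_2)\circ f$ and $f\circ g_1\circ g_2=\chi(g_1)\circ\chi(g_2)\circ f$ by local injectivity of $f$, together with the fact that a M\"obius transformation is determined by its values on any open set. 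The only small obstacle in this step is making sure one has picked a single $f$ before defining $\chi$ (so as not to introduce ambiguities); once this is done, the resulting pair $(\chi,f)$ is a deformation with $c_G([\chi,f])=\phi$, concluding the proof.
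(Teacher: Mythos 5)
Your proposal is correct and follows essentially the same route as the paper: both reduce everything to the solution theory of the Schwarzian equation (Proposition \ref{SchwSol}), verify the quadratic-differential transformation law via the chain rule \eqref{trafoSchw}, and construct $\chi$ for surjectivity by the uniqueness clause. The only difference is cosmetic --- you spell out well-definedness and injectivity explicitly, while the paper absorbs those into a one-line appeal to the $G=\bb 1$ correspondence $\mathrm{Def}(\bb 1)\cong \cal M^0_{\mathrm{li}}(\bb D)$, $\cal Q(\bb 1)\cong\cal O(\bb D)$.
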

\begin{proof} In Theorem \ref{SchwSol} we established the correspondence $\mathrm{Def}(\bb 1) \cong M^0_{li}(\bb D)$ with $\cal Q(\bb 1) \cong \cal O(\bb D)$. As a next step, we show that $c_G([f,\chi]) = S_{\mathrm{dev}([f,\chi])}$
is a quadratic differential for $G$ whenever $[f,\chi] \in \mathrm{Def}(G)$. Recall the transformation behaviour
(\ref{trafoSchw}) for $g \in$ PSL$(2,\bb C)$,
$$ S_{f\circ g} = (S_f \circ g)(g')^2\;, \qquad S_{g\circ f} = S_f\;.$$
If $g \in G$ we have $f\circ g =  \chi(g) \circ f$ so altogether we
get $$ S_f = (S_f \circ g)(g')^2\;,$$ i.e. the Schwarz\-ian of the
developing map behaves like a quadratic differential for $G$. Now let $f \in \cal M_{\mathrm{li}}(\bb D)$ be a funcion such that $S_f = \phi $ is a quadratic differential for the group $G$. Then for all $g \in G$,
$$ S_{f \circ g} = (S_f \circ g)( g')^2 = (\phi\circ g) ( g')^2 = \phi\;,$$ 
and hence by the uniqueness part of the solution theorem of the Schwarz\-ian differential equation there exists a M\"obius transformation $\chi(g)$ such that $\chi(g) \circ f = f \circ g$. This association is a homomorphism,
$$ \chi(g_1g_2) \circ f = f \circ (g_1g_2) = (f \circ g_1)\circ g_2 = \chi(g_1) \circ f \circ g_2 = \chi(g_1)\chi(g_2) \circ f\;,$$
and the pair $(\chi,f)$ therefore a deformation of $G$. 
\end{proof}
Observe that one can consider $\cal T_S(G)$ naturally as a subset of $\mathrm{Def}(G)$ by identifying $f$ with $\mathrm{dev}^{-1}(f)$. But to emphasize once more, any point in $\cal Q(G)$, and hence any point in $B_2(\bb D,G)$, corresponds to a Fuchsian group by Proposition \ref{DefQDCorr}. In particular, points on $\partial T(G)$. The Fuchsian groups on $\partial T(G)$ consist of \emph{(partially) degenerate groups} and \emph{regular b-groups}, the latter corresponding geometrically to \emph{noded Riemann surfaces} (see \cite{OBoTSaoKG:1}, \cite{OBoTSaoKG:2} and \cite{OBoTSaoKG:3}). Including only the latter, one arrives at a nice partial completion of Teichm\"uller space called \emph{augmented Teichm\"uller space} \cite{DFoRS} on which the mapping class group operates by homoeomorphisms and the quotient is homeomorphic to the Deligne-Mumford compactification of moduli space. All of this should constitute enough motivation to study other holomorphic maps of Teichm\"uller spaces as we do from Section \ref{HMoTS} onward.
\subsection{Banach spaces of holomorphic functions}\label{BSoAF}
At several points later we will need Banach spaces of automorphic forms. And also, in the proof of Theorem \ref{KernelHBM} we need several of the main results from this theory. In this section we briefly give the definitions and state the theorems we need later on. All material can be found, e.g., in \cite{AFaKG}.\\ \\
$L^p(D)$ will as usual denote the space of $p$-integrable
measurable functions on $D$, which can be any hyperbolic open set of $\cinf$,
and $\| \cdot \|_p$ will denote the $p$-norm. If a group $G$ acts on $D$, a factor of automorphy for the $G$-action on $D$ is a map $\rho_s: G\times D \rightarrow \bb C^*$ such that for fixed $g \in G$, \mbox{$\rho_s(g, \cdot):D\rightarrow \bb C^*$} is holomorphic and that for all $g_i \in G$, $\rho_s(g_1g_2,z) = \rho_s(g_1, g_2(z))\cdot \rho_s(g_2,z)$. A factor is an $s$-factor, iff $|\rho_s(g,z)|=|g'(z)|^{-s}$. For an $s$ factor $\rho_s$ of a Kleinian group $G$ acting properly discontinuously on $D$, $L_{\rho_s}(D,G)$ will
denote the space of measurable automorphic forms, i.e., functions that satisfy $(f\circ g)(z) = f(z) \rho_s(g,z)$. On $L_{\rho_s}(D,G)$ the following expressions are well-defined norms,
\begin{eqnarray*}
   \|f\|_{L^p_s(D,G)} &:=& \left(\int_{\cal F} \lambda_{D}^{2-ps}(z)|f(z)|^p
d^2z\right)^{1/p}, \qquad 1 \leq p < \infty\;, \\
   \|f\|_{L^\infty_s(D,G)} &:=& \essup_{z \in \cal F} \left \{
\lambda_{D}^{-s}(z) |f(z)| \right \}\;.
\end{eqnarray*}
and the set of functions for which the norm is finite is denoted by $L^p_{\rho_s}(D,G)$. These are Banach spaces.
Note that the integrals are not performed over $D$ but only over a
fundamental domain $\cal F$ for the action of $G$ on $D$. The subspace of holomorphic automorphic forms is denoted by \begin{eqnarray*}
   A^p_{\rho_s}(D,G)&:=& \cal O(D) \cap L^p_{\rho_s}(D,G)\;.
\end{eqnarray*}
In case $\infty \in D$ we will have to add the extra assumption\footnote{Otherwise the constructions will not be independent of the domain, e.g., the operation of pull-back introduced below would map functions holomorphic around $\infty$ to \emph{meromorphic functions} at the origin (see \cite{AFaKG} for details).} that $f \in O(|z|^{-2s})$. Also, since $L^p$-convergence of holomorphic functions implies local uniform convergence, these subspaces are closed and hence Banach spaces themselves.\\ \\ 
We follow the tradition of denoting the spaces $A^\infty_{\rho_s}(D,G)$ by $B_{\rho_s}(D,G)$ and the spaces $A^1_{\rho_s}(D,G)$ by $A_{\rho_s}(D,G)$. Also, if we are dealing with an integer power $n$ of the canonical factor of automorphy, i.e., the factor $\rho_n(g,z) = g'(z)^{-n}$, we will use the subscript $n$ instead of $\rho_n$. And finally, when $G = \{\bb 1\}$ is the trivial group, we
simplify the notation and write $L^p_{s}(D)$ instead of $L^p_{\rho_s}(D,\bb 1)$, and similarily for the holomorphic subspaces. \\ \\
These complex Banach spaces are of course independent of the chosen uniformization or, so to speak, invariant under
conjugation. More precisely, let $D$ be a simply-connected hyperbolic domain and $\phi:D \rightarrow \phi(D)$ be a biholomorphism. Then $\phi$ induces norm preserving isomorphisms for $1\leq p \leq \infty$, called the pull-back,
$$ \phi_s^*: L^p_{\rho_s}(\phi(D),\phi G \phi^{-1})\rightarrow
L^p_{\rho_s}(D,G)\;,\qquad (\phi_s^*f) (z) = (f\circ \phi) (z) \phi' (z)^s\;, $$ which respect the subspaces
$A^p_{\rho_s}$ of holomorphic functions.\\ \\
For conjugate numbers, i.e., $1/p + 1/p'=1$, we can introduce the product, called the \emph{Weil-Petersson pairing},
$$ L^p_{\rho_s}(D,G) \times L^{p'}_{\rho_s}(D,G)
\rightarrow \bb C\;, \qquad \langle f,g \rangle^G_s := \int_{\cal F} f(z) \ol {g(z)} \lambda^{2-2s}_D(z) d^2z \;.$$
\sloppy The integral is seen to converge by rewriting the integrand as follows,
$$f(z) \ol{g(z)} \lambda^{2-2s}_D(z) = \left(f(z) \lambda^{\frac{-sp}{p}}_D(z)\right) \cdot \left(\ol{g(z)} \lambda^{\frac{-sp'}{p'}}_D(z)\right) \cdot \lambda_D^2(z)\;,$$
and then applying H\"older's inequality with respect to the measure $\lambda^2_D(z) d^2z$. This also establishes the fact that the Weil-Petersson pairing induces an isometric isomorphism 
\begin{eqnarray} \label{IsoIso}L_{\rho_s}^{p'} (D,G) \cong \left(L_{\rho_s}^{p} (D,G)\right)^*\end{eqnarray}
for any $1\leq p<\infty$ and any group $G$ acting on $D$. It is remarkable that the subspaces of holomorphic automorphic forms respect this duality, albeit not isometrically.\fussy
\begin{thm}[\cite{AFaKG}] \label{NondegenerateOnAs}
  For $1\leq p<\infty$ the anti-linear map
  $$  A_{\rho_s}^{p'} (D,G) \rightarrow \left(A_{\rho_s}^{p} (D,G)\right)^*\;, \qquad \psi \mapsto l_\psi:=\langle \cdot, \psi\rangle^G_s\;,$$
  is an isomorphism which satisfies the norm inequality 
  $$ c_s^{-1} \| \psi \|_{L^{p'}_s(D,G)} \leq \| l_\psi \| \leq \|\psi\|_{L^{p'}_s(D,G)}\;.$$
\end{thm}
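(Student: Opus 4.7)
The upper bound $\|l_\psi\|\le\|\psi\|_{L^{p'}_s(D,G)}$ is immediate from H\"older's inequality: using the factorisation $f\,\overline{\psi}\,\lambda_D^{2-2s}=(f\lambda_D^{-s})(\overline{\psi}\lambda_D^{-s})\lambda_D^2$ displayed just before (\ref{IsoIso}) and integrating against the measure $\lambda_D^2\,d^2z$ over a fundamental domain gives $|\langle f,\psi\rangle_s^G|\le\|f\|_{L^p_s}\|\psi\|_{L^{p'}_s}$. This simultaneously shows that $\psi\mapsto l_\psi$ is a well-defined bounded anti-linear map into $(A^p_{\rho_s}(D,G))^*$.

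For the remaining assertions -- surjectivity, the lower norm estimate, and injectivity -- the unifying tool is a bounded projection $P:L^{p'}_{\rho_s}(D,G)\to A^{p'}_{\rho_s}(D,G)$ that is adjoint, with respect to $\langle\cdot,\cdot\rangle_s^G$, to the inclusion $A^p_{\rho_s}(D,G)\hookrightarrow L^p_{\rho_s}(D,G)$. Writing $c_s:=\|P\|$, surjectivity and the lower bound fall out together: given any $\ell\in(A^p_{\rho_s}(D,G))^*$, Hahn--Banach extends it to an anti-linear functional $\tilde\ell$ on $L^p_{\rho_s}(D,G)$ of the same norm; the isometric duality (\ref{IsoIso}) produces $\tilde\psi\in L^{p'}_{\rho_s}(D,G)$ with $\tilde\ell=\langle\cdot,\tilde\psi\rangle_s^G$ and $\|\tilde\psi\|_{L^{p'}_s}=\|\ell\|$; and $\psi:=P\tilde\psi\in A^{p'}_{\rho_s}(D,G)$ satisfies, by the adjoint property, $l_\psi(f)=\langle f,P\tilde\psi\rangle_s^G=\langle f,\tilde\psi\rangle_s^G=\ell(f)$ for every $f\in A^p_{\rho_s}(D,G)$. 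The accompanying norm estimate is $\|\psi\|_{L^{p'}_s}\le c_s\|\tilde\psi\|_{L^{p'}_s}=c_s\|\ell\|=c_s\|l_\psi\|$.

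For injectivity I would use the reproducing kernel $K_w\in A^p_{\rho_s}(D,G)$ that represents evaluation at $w\in D$, so that $\psi(w)=\langle\psi,K_w\rangle_s^G$ for every $\psi\in A^{p'}_{\rho_s}(D,G)$; such kernels are constructed in \cite{AFaKG} as Poincar\'e theta series over $G$ of the weighted Bergman kernel of the simply connected cover. If $l_\psi\equiv0$ then $\overline{\psi(w)}=\overline{\langle\psi,K_w\rangle_s^G}=\langle K_w,\psi\rangle_s^G=l_\psi(K_w)=0$ for all $w\in D$, whence $\psi=0$.

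The main technical obstacle is the construction of $P$ and the estimate on its operator norm. Concretely, $P$ is realised by integration against a Poincar\'e-theta reproducing kernel on $D$, and its $L^{p'}_{\rho_s}\to L^{p'}_{\rho_s}$ boundedness -- which then restricts to the required boundedness into $A^{p'}_{\rho_s}$ -- is obtained by a Schur-type test calibrated by the hyperbolic density $\lambda_D$ and the geometry of a fundamental domain for $G$. The endpoint $p=1$ (so $p'=\infty$) is the delicate case, since the naive Bergman projection is unbounded on $L^\infty$; here the hyperbolic weight $\lambda_D^{-s}$ built into the definition of $L^\infty_s$ is precisely what saves the estimate. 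Both this boundedness and the existence and continuity of the reproducing kernels $K_w$ are carried out in detail in \cite{AFaKG}, so the argument above really does reduce the theorem to results available there.
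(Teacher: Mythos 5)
The paper does not prove this statement at all; it is quoted as background from \cite{AFaKG}, so there is no in-paper argument to compare against. Your proposal is essentially the standard proof from that reference, and it is correct: the upper bound is H\"older, surjectivity comes from Hahn--Banach plus the duality \eqref{IsoIso} plus the symmetric bounded projection (which is exactly the operator $\beta_{\rho_s}$ the paper records as Theorem \ref{last}, with norm at most $c_s$), and injectivity follows from pairing against the Poincar\'e series of the weighted Bergman kernel. The only point worth making explicit is the order of quantifiers in the lower bound: your estimate $\|\psi\|_{L^{p'}_s}\le c_s\|l_\psi\|$ is derived for the particular $\psi=P\tilde\psi$ produced by the surjectivity construction, so to get it for an \emph{arbitrary} $\psi\in A^{p'}_{\rho_s}(D,G)$ you must first invoke your injectivity step to conclude that the constructed preimage of $l_\psi$ coincides with the given $\psi$; with that one sentence added, the argument is complete.
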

The projection operator from the space of measurable automorphic forms to the holomorphic automorphic forms can be written down quite explicitly. For this, let
$$ k_{\bb D}: \bb D \times \bb D \rightarrow \bb C\;, \qquad k_{\bb D}(z,w) := \frac{1}{\pi(1-z\bar w)^2}\;.$$
be the well-known Bergman kernel on the disc. On any other domain related to $D$ related to $\bb D$ via a biholomorphism $\psi:D\rightarrow
\psi(D)$, the kernel is given by
$$ k_{\psi(D)} (\psi(z),\psi(w)) \psi'(z)\ol{\psi'(w)} = k_{D} (z,w)\;.$$
Now let us define a related function, 
\begin{eqnarray} 
	\label{BergmanDef} K_{D,s}(z,w):= (2s-1)\pi^{s-1}\left(k_D(z,w)\right)^s\;, \qquad c_s:=\frac{2s-1}{s-1}\;,
\end{eqnarray}
which we call the $s$-Bergman kernel. 
\begin{thm} \label{last}
    The operator defined by the expression
    $$ f\mapsto (\beta_{\rho_s}f)(z):= \int_{D} \lambda^{2-2s}_D(w)K_s(z,w)f(w) d^2w\;,$$
    is a well-defined projection operator $L^p_{\rho_s}(D,G)\rightarrow A^p_{\rho_s}(D,G)$ of norm at most $c_s$.           Moreover it is symmetric with respect to the Weil-Petersson pairing, i.e.,
     $$\langle \beta_{\rho_s} f,g\rangle_s^G = \langle f,\beta_{\rho_s} g\rangle^G_s\;.$$
\end{thm}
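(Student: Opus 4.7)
The plan is to model the argument on the classical proof that the Bergman projection is a bounded projection onto holomorphic functions, now generalized in two directions: to weighted $L^p$ for $p \in [1,\infty]$ and to equivariance under the group $G$. As a first reduction, the pull-back is an isometric isomorphism on the relevant Banach spaces and the Bergman kernel transforms covariantly under biholomorphisms, so it suffices to work with $D = \bb D$, where $K_s(z,w) = (2s-1)/(\pi(1-z\bar w)^{2s})$ and $\lambda^{2-2s}_{\bb D}(w) = (1-|w|^2)^{2s-2}$ have explicit form.

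My first step is to establish well-definedness. Pointwise convergence of the defining integral and holomorphy in $z$ follow by differentiation under the integral sign, justified by locally uniform bounds on $K_s(z,w)$ and an application of H\"older's inequality. The automorphy $(\beta_{\rho_s}f)\circ h = (\beta_{\rho_s}f)\rho_s(h,\cdot)$ for $h \in G$ comes from a measure-preserving substitution $w \mapsto hw$ in the integral; the key ingredients are the weight transformation $\lambda^{2-2s}_{\bb D}(hw) = \lambda^{2-2s}_{\bb D}(w)|h'(w)|^{2s-2}$, the covariance $k_{\bb D}(hz,hw)h'(z)\overline{h'(w)} = k_{\bb D}(z,w)$ of the Bergman kernel, and the assumed $s$-automorphy of $f$, which together rearrange to supply the required factor in $z$.

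The projection property is the reproducing property of the $s$-Bergman kernel: $\beta_{\rho_s}f = f$ on $A^p_{\rho_s}$. For $p = 2$ this is the defining identity of $K_s$; for general $p$, since $K_s(z,\cdot) \in A^{p'}_s$ for any finite $p'$, one appeals to the duality of Theorem~\ref{NondegenerateOnAs} to conclude $\langle f, K_s(z,\cdot)\rangle_s = f(z)$ pointwise. Idempotency of $\beta_{\rho_s}$ then follows automatically, since its range lies in $A^p_{\rho_s}$. Symmetry $\langle \beta_{\rho_s}f, g\rangle_s^G = \langle f, \beta_{\rho_s}g\rangle_s^G$ follows from Fubini's theorem together with the Hermitian symmetry $K_s(z,w) = \overline{K_s(w,z)}$, after absolute integrability has been verified by H\"older.

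The main obstacle is the norm bound $\|\beta_{\rho_s}\|_{L^p_s \to L^p_s} \leq c_s$. My approach would be Schur's test with auxiliary weight $\varphi(z) = (1-|z|^2)^{-a}$ for a parameter $a$ in the range $(0,(2s-1)/\max(p,p'))$, reducing both Schur integrals to evaluations of the Forelli--Rudin type
\begin{equation*}
\int_{\bb D}\frac{(1-|w|^2)^c}{|1-z\bar w|^{2s}}\,d^2w \;=\; \pi\,\frac{\Gamma(c+1)\Gamma(2s-2-c)}{\Gamma(2s-1)}\,(1-|z|^2)^{c+2-2s}
\end{equation*}
valid for $-1 < c < 2s-2$. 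A careful optimization in $a$ then produces precisely the constant $c_s = (2s-1)/(s-1)$; the endpoint cases $p = 1, \infty$ are handled by direct integration against the kernel. Finally, the transition from the trivial group to arbitrary $G$ is accomplished by folding $\int_{\bb D}$ as a sum over $G$-translates of a fundamental domain and noting that the Schur bounds transfer intact through the resulting Poincar\'e-series kernel.
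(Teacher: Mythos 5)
First, a point of order: the paper offers no proof of this statement --- it is imported verbatim from Kra's book (the section opens with ``All material can be found, e.g., in \cite{AFaKG}''), so your proposal can only be compared with the standard argument there. Your architecture matches it: reduce to $D=\bb D$ via the norm-preserving pull-backs and the covariance of $k_D$; get $G$-automorphy of $\beta_{\rho_s}f$ from the substitution $w\mapsto hw$ (this does work for a general $s$-factor, since $\rho_s(h,\cdot)/(h')^{-s}$ is a unimodular constant and the $h$-dependent factors collapse to exactly $\rho_s(h,z)$); obtain the projection property from the reproducing identity of $K_s$; and prove the norm bound by weighted estimates of $\int_{\bb D}(1-|w|^2)^c|1-z\bar w|^{-2s}\,d^2w$, unfolding over $G$-translates of $\cal F$ at the end. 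One caveat: the reproducing identity is \emph{not} ``the defining identity of $K_s$'' --- the paper defines $K_s$ by the explicit formula \eqref{BergmanDef} --- so it must actually be verified (e.g.\ monomial by monomial on $\bb D$, then transported by conformal covariance and a density argument); your appeal to Theorem \ref{NondegenerateOnAs} here is circular in spirit, since that duality is itself usually proved \emph{using} the projection.

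The genuine gap is in the load-bearing quantitative step. Your displayed Forelli--Rudin ``identity'' is false as an identity: expanding $|1-z\bar w|^{-2s}$ and integrating termwise gives
\begin{equation*}
\int_{\bb D}\frac{(1-|w|^2)^c}{|1-z\bar w|^{2s}}\,d^2w \;=\; \frac{\pi}{c+1}\,(1-|z|^2)^{c+2-2s}\,{}_2F_1\bigl(c+2-s,\,c+2-s;\,c+2;\,|z|^2\bigr)\;,
\end{equation*}
which is a pure power of $(1-|z|^2)$ only when $c=s-2$ (then the hypergeometric factor is $\equiv 1$ and the integral equals $\tfrac{\pi}{s-1}(1-|z|^2)^{-s}$ exactly); for other $c$ one has only an asymptotic, and even the asymptotic constant is $\pi\Gamma(c+1)\Gamma(2s-2-c)/\Gamma(s)^2$, not $\pi\Gamma(c+1)\Gamma(2s-2-c)/\Gamma(2s-1)$ --- your formula already fails at $z=0$. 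Moreover a Schur test with a free weight parameter $a$ produces a product of two different constants raised to the powers $1/p$ and $1/p'$ and will not visibly collapse to $c_s$. The clean repair: the exact case $c=s-2$ yields the endpoint bounds $\|\beta_{\rho_s}\|\le c_s=\tfrac{2s-1}{s-1}$ on $L^1_{\rho_s}$ (Fubini) and on $L^\infty_{\rho_s}$ (pointwise estimate) with the sharp constant; since $f\mapsto f\lambda_D^{-s}$ is, for every $p$ simultaneously, an isometry of $L^p_{\rho_s}(D,G)$ onto the ordinary $L^p$ space of the fixed measure $\lambda_D^2\,d^2z$ on $\cal F$, Riesz--Thorin interpolation then transfers the constant $c_s$ to all $1\le p\le\infty$ at once, with no optimization needed.
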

The final theorem we need later on concerns the normal convergence of the following series called the \emph{Poincar\'e series},
$$\Theta_{\rho_s} [f] (z):= \sum_{g \in G} f(g z) \rho_{g}(z)^{-1}\;,$$
for given $s$-factor of automorphy $\rho_s$. 
\begin{thm}\label{PS}
   If $\|f\|_{L^1_s(D)} < \infty$, the series $\Theta_{\rho_s}[f]$
converges normally. Moreover $\Theta_{\rho_s} [f] \in
L^1_{\rho_s}(D,G)$ and $$ \| \Theta_{\rho_s} [f] \|_{L^1_{\rho_s}(D,G)} \leq \|f\|_{L^1_s(D)}\;.$$
\end{thm}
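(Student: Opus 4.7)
The plan is to derive the norm inequality first, using the interplay between the factor of automorphy and the hyperbolic measure; the convergence statements will then fall out of finiteness of the resulting integral.

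First, I would estimate
$$\|\Theta_{\rho_s}[f]\|_{L^1_{\rho_s}(D,G)} \;\leq\; \int_{\cal F} \lambda_D^{2-s}(z) \sum_{g\in G} |f(gz)|\,|g'(z)|^s\, d^2z,$$
where $|\rho_s(g,z)|^{-1} = |g'(z)|^s$ has been inserted by the definition of an $s$-factor. The key identity is M\"obius invariance of the Poincar\'e density, $\lambda_D(gz)|g'(z)| = \lambda_D(z)$, which rewrites $|g'(z)|^s \lambda_D^{-s}(z) = \lambda_D^{-s}(gz)$; the integrand then factors as $|f(gz)|\lambda_D^{-s}(gz)\cdot \lambda_D^2(z)$. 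Since the integrand is positive and measurable, Tonelli permits swapping sum and integral unconditionally; and since the hyperbolic area form $\lambda_D^2 \, d^2z$ is $G$-invariant, substituting $w = gz$ in the $g$-th summand converts $\int_{\cal F}$ into $\int_{g\cal F}$. Because the translates $g\cal F$ tile $D$ up to a null set, the resulting sum of integrals reassembles into $\int_D \lambda_D^{2-s}(w)|f(w)|\, d^2w = \|f\|_{L^1_s(D)}$. This proves the norm inequality; finiteness of the double integral then gives $\sum_g |f(gz)|\,|g'(z)|^s < \infty$ for almost every $z \in D$, so $\Theta_{\rho_s}[f]$ is well-defined a.e.\ and lies in $L^1_{\rho_s}(D,G)$ with the claimed bound.

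For normal convergence, I would invoke the mean-value property of holomorphic functions: if $h$ is holomorphic on a Euclidean disc with $\overline{B(z_0,r)} \subset D$, then $|h(z_0)| \leq (\pi r^2)^{-1} \int_{B(z_0,r)} |h|\, d^2w$. Applied termwise to tails $\sum_{g \notin F} f(gz)\rho_s(g,z)^{-1}$ (each summand holomorphic in $z$ in the intended applications), this bounds $\sup_{z \in K}|\mathrm{tail}|$ on any compact $K \subset D$ by a constant depending only on $K$ times the $L^1$-integral of the tail on a slightly larger compact neighbourhood $K' \supset K$. The latter is in turn dominated by the restriction to $K'$ of the convergent series handled above (after covering $K'$ by finitely many translates of $\cal F$), and therefore tends to zero as $F$ exhausts $G$.

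The only step requiring genuine care is the reassembly in the norm bound: one must verify that $\bigsqcup_g g\cal F$ covers $D$ up to a null set and that everything in sight is positive and measurable, so that Tonelli applies without a preliminary convergence hypothesis. These facts are standard in the theory of Fuchsian group actions on hyperbolic domains, so the main obstacle is entirely one of bookkeeping rather than analysis. The mean-value step tacitly assumes holomorphicity (or at least continuity) of the summands, which is present whenever this theorem is invoked in the paper.
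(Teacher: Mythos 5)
The paper does not actually prove Theorem \ref{PS}: it is quoted as a standard result from \cite{AFaKG}, so there is no internal proof to compare against. Your argument is the standard one from that reference and it is correct. The norm bound via the identity $\lambda_D^{-s}(z)|g'(z)|^s=\lambda_D^{-s}(gz)$, Tonelli, the $G$-invariance of $\lambda_D^2\,d^2z$, and the unfolding of $\sum_g\int_{\cal F}$ into $\int_D$ is exactly right, and the sub-mean-value (subharmonicity of $|h|$ for $h$ holomorphic) argument with dominated convergence of the tails is the standard route to normal convergence; you are also right to flag that the latter step needs $f$ holomorphic, which is how the theorem is used in the paper (its restriction to $A^1_s(D)$). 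Two small points you should make explicit. First, the claim $\Theta_{\rho_s}[f]\in L^1_{\rho_s}(D,G)$ requires not just finiteness of the norm but the transformation law $(\Theta_{\rho_s}[f])(hz)=\Theta_{\rho_s}[f](z)\,\rho_s(h,z)$; this follows by reindexing $g\mapsto gh$ in the absolutely convergent series and using the cocycle identity $\rho_s(gh,z)=\rho_s(g,hz)\rho_s(h,z)$, but it is part of the statement and should be said. Second, in the unfolding and in covering $K'$ by finitely many translates of $\cal F$ you are implicitly using local finiteness of the tessellation $\{g\cal F\}$ and that the translates cover $D$ up to a null set; as you note, this is standard for fundamental domains of properly discontinuous actions, so it is bookkeeping, not a gap.
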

This means that the Poincar\'e operator $\Theta_{\rho_s}$ is a bounded linear operator $L^1_s(D)\rightarrow L^1_{\rho_s}(D,G)$. By normal convergence its restriction to $A^1_s(D)$ maps into $A^1_{\rho_s}(D,G)$. The Poincar\'e operator is compatible with the Weil-Petersson product in the following way, which can be checked by straight-forward calculation.
\begin{lemma} \label{scalar}
   Let $f \in L_{\rho_s}^\infty(D,G)$, $g \in L^1_{\rho_s}(D,G)$ and $g=\Theta_{\rho_s}[h]$ for some function
$h \in L_s^1(D)$. Then the scalar product can be computed by
$$ \langle f,g\rangle^G_s = \int_D f(w) \ol{h(w)} \lambda_D^{2-2s}(w) d^2w = \langle f, h\rangle_s^{\bb 1}\;.$$
\end{lemma}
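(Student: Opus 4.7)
The proof follows the standard \emph{unfolding trick}. Starting from the definition
$$\langle f, g \rangle^G_s = \int_{\cal F} f(z)\,\overline{g(z)}\,\lambda_D^{2-2s}(z)\, d^2z,$$
I would substitute $g = \Theta_{\rho_s}[h] = \sum_{\gamma \in G} h(\gamma z)\rho_s(\gamma,z)^{-1}$ and interchange summation with integration. This interchange is the sole non-cosmetic point of the argument, and it is delivered by Theorem \ref{PS}: it supplies normal convergence of the Poincar\'e series together with the bound $\|\Theta_{\rho_s}[h]\|_{L^1_{\rho_s}(D,G)} \leq \|h\|_{L^1_s(D)}$, so combined with $f \in L^\infty_{\rho_s}(D,G)$ the Fubini--Tonelli theorem applies.

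With the sum outside the integral, I would perform the change of variables $w = \gamma z$ in each summand. Three compatible transformation rules then conspire to eliminate every appearance of $\gamma$: the automorphy rule for $f$, which after inverting reads $f(\gamma^{-1} w) = f(w)\rho_s(\gamma^{-1}, w)$; the cocycle identity, which rewrites $\rho_s(\gamma,\gamma^{-1}(w))^{-1}$ as $\rho_s(\gamma^{-1},w)$; and the $s$-factor property $|\rho_s(\gamma^{-1},w)| = |(\gamma^{-1})'(w)|^{-s}$ combined with the invariance of the hyperbolic area element $\lambda_D^2\, d^2 z$ under $G$. Tracking the powers of $|(\gamma^{-1})'|$ shows that everything cancels, so the $\gamma$-th summand collapses to $\int_{\gamma \cal F} f(w)\overline{h(w)}\lambda_D^{2-2s}(w)\, d^2 w$.

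Summing over $\gamma \in G$ and using that the translates $\{\gamma \cal F\}_{\gamma \in G}$ tile $D$ up to a null set reassembles the total into $\int_D f(w)\overline{h(w)}\lambda_D^{2-2s}(w)\, d^2 w$, which is by definition $\langle f, h \rangle_s^{\bb 1}$ (a fundamental domain for the trivial group being all of $D$). The main --- indeed only --- technical obstacle is the interchange of sum and integral; as noted, Theorem \ref{PS} is precisely engineered for this, and every subsequent step is a purely formal manipulation of the cocycle data.
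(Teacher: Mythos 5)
Your proposal is correct and is precisely the ``straight-forward calculation'' the paper alludes to without writing out: unfold the Poincar\'e series, justify the interchange via the absolute convergence underlying Theorem \ref{PS} together with $|f|\leq \|f\|_{L^\infty_s}\lambda_D^s$, and check that the powers of $|(\gamma^{-1})'|$ coming from $\rho_s(\gamma^{-1},w)\overline{\rho_s(\gamma^{-1},w)}$, the Jacobian, and $\lambda_D^{2-2s}$ cancel exactly, so the translates $\gamma\cal F$ reassemble into $D$. No gaps.
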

\section{Holomorphic Maps of Teichm\"uller Spaces}\label{HMoTS}
Now that we have introduced all the needed background, we immediately come to the main theorem, which is the starting point of all investigations in this paper, in its precise formulation. 
\begin{thm}\label{higherBersholomorphic}
	  Let $Q:\cal S(D) \rightarrow \cal O(D)$ be a differential operator that satisfies 
	  $$ Q[f\circ g] = (Q[f]\circ g)(g')^{m} \qquad \forall g \in \mathrm{PSL}(2, \bb C)\;,$$
	  for some $m \in \bb Z$ and such that $Q[f]$ is a polynomial in $f'',\ldots, f^{(N)}$ and $(f')^{-1}$ with complex coefficients. Then $Q$ induces a holomorphic map 
	  $$\beta^Q:\cal T_B(G) \rightarrow B_{m}(\bb D)\;, \qquad \beta^Q([\mu]) = Q(\pi_{BS}([\mu]))\;,$$ for any Fuchsian group $G$.
\end{thm}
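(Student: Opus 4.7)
\emph{Overall strategy.} I would mirror Bers's original proof of Theorem~\ref{BembThm}: first establish that the lifted map
\benn
\tilde\beta^Q\colon \bb B_1(L^\infty_{(-1,1)}(\bb D^c, G)) \to B_m(\bb D), \qquad \mu\mapsto Q[w^\mu|_{\bb D}]\,,
\eenn
is well-defined, with values in $B_m(\bb D)$, and is holomorphic as a map of Banach-space domains; then check that $\tilde\beta^Q$ factors through $\pi_T$ to induce $\beta^Q$; and finally invoke the fact that $\pi_T$ is a holomorphic submersion (Thm.~\ref{BembThm}) to conclude holomorphy of $\beta^Q$.

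\emph{The key estimate.} To see $\tilde\beta^Q$ lands in $B_m(\bb D)$, with bounds uniform on $\mu$-balls of radius $\delta<1$, I would use the PSL-equivariance to transport the pointwise norm to the origin. For $\phi_z(w):=(w+z)/(1+\bar z w)\in \mathrm{Aut}(\bb D)$ one has $\phi_z(0)=z$ and $\phi_z'(0)=1-|z|^2$, so the covariance formula gives
\benn
|Q[f](z)|\,(1-|z|^2)^m \;=\; |Q[f\circ\phi_z](0)|\,.
\eenn
It therefore suffices to bound $|Q[F](0)|$ universally, as $F = f\circ\phi_z$ ranges with $z\in\bb D$ and $f = w^\mu|_{\bb D}$ in $\cal F_\delta|_{\bb D}$. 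The weighted-homogeneity of $Q$ forced by the dilation subgroup of $\mathrm{PSL}(2,\bb C)$ allows one to rewrite $Q[F](0)$ as a universal polynomial in the normalized ratios $F^{(k)}(0)/F'(0)$, which are bounded by classical Bieberbach/de Branges estimates for schlicht functions on $\bb D$. The cleanest way to absorb the remaining residual factors is to use the full PSL-equivariance to re-express $Q[f]$ as a polynomial in the Schwarzian $S_f$ and its holomorphic derivatives $S_f^{(k)}$; the Kraus-Nehari bound $\|S_f\|_{B_2(\bb D)}\leq 6$ together with Cauchy estimates on derivatives of $B_2$-bounded holomorphic functions then gives $\|S_f^{(k)}\|_{B_{2+k}(\bb D)}\leq C_k$, and summing over the finitely many weight-$m$ monomials yields $\|Q[f]\|_{B_m(\bb D)}\leq C(\delta)$.

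\emph{Descent and holomorphy.} If $\mu\sim\nu$ then $w^\mu$ and $w^\nu$ agree on $\partial\bb D$ and hence, being holomorphic on $\bb D$, on all of $\bb D$, so $Q[w^\mu|_{\bb D}] = Q[w^\nu|_{\bb D}]$ and $\tilde\beta^Q$ descends to a well-defined $\beta^Q$. For holomorphy of $\tilde\beta^Q$ I would invoke the standard Banach-space criterion that strong (Fr\'echet) holomorphy is equivalent to G\^ateaux holomorphy together with local boundedness. Local boundedness is the previous paragraph. For G\^ateaux holomorphy, along any holomorphic family $t\mapsto\mu_t\in \bb B_1$ the normalized solutions $w^{\mu_t}$ depend holomorphically on $t$ locally uniformly on $\bb D$, by the classical regularity result for the Beltrami equation recalled in the excerpt; hence so do all of their derivatives $(w^{\mu_t})^{(k)}$ and the reciprocal $1/(w^{\mu_t})'$, so $t\mapsto Q[w^{\mu_t}](z)$ is holomorphic at each $z\in\bb D$. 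Holomorphy of $\beta^Q$ then follows automatically because $\pi_T$ is a holomorphic submersion onto $\cal T_B(G)$.

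\emph{Main obstacle.} The only substantive difficulty is the uniform $B_m$-estimate; everything else is essentially formal. A naive monomial-by-monomial bound leaves residual factors of the form $(|f'(z)|(1-|z|^2))^{s}$ that are not controlled uniformly on $\bb D$ for a general schlicht $f$, and one must use PSL-equivariance beyond just the dilation subgroup to see that these cancel out. The Schwarzian-reduction sketched above is the cleanest way to organize the cancellation, but the underlying algebraic statement — that the PSL-equivariant polynomial differential operators are, up to the appropriate weighting, polynomials in $S_f$ and its holomorphic derivatives — itself needs justification.
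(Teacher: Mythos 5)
Your overall architecture (lift to $\tilde\beta^Q$, prove local boundedness plus Gateaux holomorphy, then descend) is exactly the paper's, and you correctly isolate the crux: the uniform $B_m$-estimate, and within it the residual powers of $F'(0)$ that survive after normalizing the monomials. The genuine gap is in your proposed resolution of that crux. The algebraic claim that the admissible operators can be rewritten as polynomials in $S_f$ and its derivatives $S_f^{(k)}$ is false: any such polynomial is invariant under \emph{post}composition with M\"obius transformations (since $S_{g\circ f}=S_f$), whereas the theorem only assumes equivariance under \emph{pre}composition. A concrete counterexample inside the class covered by the theorem is $\sigma^A_4[f]=S_f'-2(f''/f')S_f$, which satisfies $\sigma^A_4[f\circ g]=(\sigma^A_4[f]\circ g)(g')^3$ but is not postcomposition invariant; the only weight-$3$ polynomial in $S_f$ and its derivatives is a multiple of $S_f'$, and $\sigma^A_4[f]-S_f'=-2(f''/f')S_f\not\equiv 0$. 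So the cancellation cannot be organized your way, and without it the term-by-term bound genuinely fails: e.g.\ the monomial $(f'')^2/(f')^3$ of $\sigma^B_4$ contributes $(F''(0)/F'(0))^2\cdot (F'(0))^{-1}$ with $(F'(0))^{-1}=\big(f'(z)(1-|z|^2)\big)^{-1}$ unbounded over schlicht $f$, even though the full sum is bounded.

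The paper avoids the issue by never normalizing $F'(0)$ and never invoking coefficient estimates for schlicht functions. It restricts to the family $\cal F_\delta$ of $1$-point-normalized $K$-quasiconformal solutions, which is normal and equi-H\"older continuous on compacta; after precomposing with the affine map taking $\bb D$ onto $\bb D_r(z_0)$ with $r=\mathrm{dist}(z_0,\partial\bb D)$, Cauchy estimates give uniform bounds on all derivatives of the rescaled function at the origin (together with control of $(f')^{-1}$ there), hence a uniform bound $N_m$ on $|Q[\,\cdot\,](0)|$ over the rescaled family. The covariance then yields $|Q[w^\mu](z_0)|\cdot r^m\leq N_m$, and the inequality $\lambda_{\bb D}(z_0)\,\mathrm{dist}(z_0,\partial\bb D)\geq 1/4$ makes the powers of $r$ and of $\lambda_{\bb D}^{-1}$ cancel exactly, giving $\|Q[w^\mu]\|_{B_m(\bb D)}\leq 4^m N_m$. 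If you wish to keep your disc-automorphism formulation, the repair is the same: bound $|Q[f\circ\phi_z](0)|$ via normality and equicontinuity of the quasiconformal family rather than via de Branges, so that no division by $F'(0)$ is ever performed. (Your Gateaux step is also lighter than the paper's, which verifies that the difference quotients converge in the $B_m$-norm via Cauchy-integral estimates rather than only pointwise in $z$; that is a routine repair once local boundedness is secured.)
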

\emph{Proof}. The map is well-defined, since $w^{\mu}$ only depends on the class $[\mu]$. Recall also that $\pi_{BS}([\mu])$ is given by the 1-point normalization at the origin of $w^\mu|_{\bb D}$. To prove holomorphicity for all $G$ it clearly suffices to prove it for $G=\bb 1$ since all $T(G) \subset T(\bb 1)$ are complex submanifolds.\\ \\
The complex structure on $\cal T_B(\bb 1)$ is inherited from the one on $L^\infty(\bb D^c)$, so if we lift the map $\beta^Q$ to
$$\tilde \beta^Q:= \beta^Q \circ \pi_T\;, \qquad \tilde \beta^Q(\mu) = \beta^Q([\mu])\;,$$
then $\beta^Q$ is holomorphic iff $\tilde \beta^Q$ is. In general, a map from $\bb C$ into a complex Banach space is said to be holomorphic iff the Gateaux derivative exists and is finite. A map from an infinite-dimensional complex Banach space into a complex Banach space is holomorphic iff it is locally bounded and it is holomorphic when restricted to any finite-dimensional subspace, which again is true iff it is holomorphic when restricted to any \emph{one-dimensional} subspace by Hartog's theorem. Hence we have to show that $\tilde \beta^Q$ is locally bounded and that
$$ \lim_{t\rightarrow 0}\frac{\| \tilde \beta^Q (\mu + t \nu) - \tilde \beta^Q(\mu)\|_{B_{m}(\bb D)}}{t}$$
exists and is finite for all $\mu \in \textrm{Belt}(\bb D^c, \bb 1)$ and $\nu \in L^\infty(\bb D^c)$, which we do in two separate lemmas. 
\begin{lemma}\label{BetaLocallyBounded}
		The function $\tilde \beta^Q(\mu + t \nu)(z_0 + z)$, viewed as a function of $(z,t)$ in a small neighborhood of the origin in $\bb C^2$, is locally bounded for any $z_0 \in \bb D,$ $\mu \in \mathrm{Belt}(\bb D^c)$ and $\nu \in L^\infty(\bb D^c)$.
\end{lemma}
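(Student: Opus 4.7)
The plan is to reduce everything to uniform control of the derivatives $(w^{\mu+t\nu})^{(k)}$ and of $1/(w^{\mu+t\nu})'$ on a small bidisc neighborhood of $(z_0,0)$, and then invoke the polynomial form of $Q$. Fix $\mu \in \Belt(\bb D^c)$ and $\nu \in L^\infty(\bb D^c)$. For $|t|$ small enough the coefficient $\mu+t\nu$ still lies in $\bb B_1(L^\infty(\bb D^c))$; indeed, we can pick $\epsilon > 0$ and $\delta < 1$ so that $\|\mu+t\nu\|_\infty \le \delta$ for all $|t| < \epsilon$ (here we use the extension-by-zero convention on $\bb D$). Thus the 1-point normalized solutions $\{w^{\mu+t\nu}\}_{|t|<\epsilon}$ all lie in the normal family $\cal F_\delta$ of 1-point normalized quasiconformal homeomorphisms with dilatation $\le \delta$, and each $w^{\mu+t\nu}$ is holomorphic on $\bb D$ (since the Beltrami coefficient vanishes there).

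Next I would extract the bounds on the holomorphic derivatives. Choose a closed disc $\ol{\bb D_r(z_0)} \subset \bb D$ and a slightly larger disc $\ol{\bb D_{r'}(z_0)} \subset \bb D$. By normality the family $\{w^{\mu+t\nu}|_{\bb D_{r'}(z_0)}\}_{|t|<\epsilon}$ is uniformly bounded (normal families of 1-point normalized schlicht maps have uniform sup-bounds on compacta — equivalently, use the Koebe distortion theorems applied after composing with a Möbius). Cauchy's integral formula over the circle of radius $r'$ then yields a constant $C_k$ such that
\[
\sup_{|t|<\epsilon}\ \sup_{z \in \bb D_r(z_0)} \bigl|\,(w^{\mu+t\nu})^{(k)}(z)\,\bigr| \le C_k, \qquad k=1,\ldots,N.
\]

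The delicate point is bounding $(w^{\mu+t\nu})^{-\prime}:= 1/(w^{\mu+t\nu})'$ uniformly. This is the main obstacle, since $Q$ involves negative powers of $f'$. Each individual $w^{\mu+t\nu}$ is injective on $\bb D$, so its derivative is nowhere zero; hence $1/(w^{\mu+t\nu})'$ is bounded on $\ol{\bb D_r(z_0)}$ for each fixed $t$. For uniformity in $t$, suppose toward contradiction that there exist $t_n \to t_\infty \in \ol{\bb D_\epsilon}$ and $z_n \in \ol{\bb D_r(z_0)}$ with $(w^{\mu+t_n\nu})'(z_n) \to 0$. By normality, pass to a subsequence so that $w^{\mu+t_n\nu}$ converges locally uniformly on $\bb D$ to some $w^\star$; by the continuous dependence of the Beltrami solution on its coefficient (cf.\ the regularity statement recalled in Section \ref{MoTS}), the limit is $w^{\mu+t_\infty\nu}$, which is univalent on $\bb D$. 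Hence $(w^\star)'$ is nowhere zero on $\bb D$ (either directly from univalence, or by Hurwitz applied to the derivatives). Passing to a further subsequence, $z_n \to z_\infty \in \ol{\bb D_r(z_0)} \subset \bb D$ and $0 = \lim (w^{\mu+t_n\nu})'(z_n) = (w^\star)'(z_\infty)$, a contradiction. So there is a constant $C_0 > 0$ with $|(w^{\mu+t\nu})'(z)| \ge C_0$ throughout $\ol{\bb D_r(z_0)} \times \ol{\bb D_\epsilon}$.

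Combining the two estimates, the values $f'',\ldots,f^{(N)}$ and $(f')^{-1}$ of $f = w^{\mu+t\nu}$ are uniformly bounded on this bidisc. Since $Q[f]$ is a polynomial with complex coefficients in these quantities, $\tilde\beta^Q(\mu+t\nu)(z) = Q[w^{\mu+t\nu}](z)$ is uniformly bounded on $\bb D_r(z_0) \times \bb D_\epsilon$, which is the asserted local boundedness near $(z_0,0)$.
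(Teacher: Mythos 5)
Your argument is correct for the lemma as literally stated, but it takes a genuinely different route from the paper's, and the difference matters for how the lemma is used afterwards. You work directly at the fixed point $z_0$: normality of the $1$-point normalized family gives uniform sup-bounds on a compact subdisc, Cauchy estimates control $f'',\ldots,f^{(N)}$ there, and a compactness/Hurwitz contradiction gives a uniform lower bound on $|f'|$; the polynomial structure of $Q$ then finishes. This is clean, and your explicit treatment of the $(f')^{-1}$ term via Hurwitz is actually more careful at this local level than the paper's. The paper instead conjugates by the M\"obius map $M$ taking $\bb D$ onto $\bb D_r(z_0)$ with $r=\mathrm{dist}(z_0,\partial\bb D)$, applies the transformation law $Q[f\circ M]=(Q[f]\circ M)(M')^m$, and bounds $Q$ of the rescaled family at the origin; the factor $r^{m}$ this produces is then cancelled against $\lambda_{\bb D}^{-m}(z_0)$ using the fundamental estimate \eqref{FEoPD}. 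The payoff is the estimate
$$\lambda_{\bb D}^{-m}(z_0)\,\big|\tilde\beta^Q(\mu+t\nu)(z_0)\big|\;\leq\;4^{m}N_m\qquad\forall\,z_0\in\bb D,\;|t|<\epsilon,$$
with $N_m$ independent of $z_0$ --- i.e.\ a bound on the $B_m(\bb D)$-\emph{norm}, uniform in $t$. Your constants $C_k$ and $C_0$ come from compact exhaustion and necessarily degenerate as $z_0\to\partial\bb D$, so your proof yields no control of the weighted supremum over all of $\bb D$. Since the next lemma invokes precisely this uniform weighted bound (to majorize $\sup_{|\eta|=\epsilon}|\phi(\eta,z)|$ independently of $z$ and conclude that the difference quotients are Cauchy in $B_m(\bb D)$), your version of the lemma, though true, would not support the rest of the proof of Theorem \ref{higherBersholomorphic}. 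If you want your approach to do the same work, you must either restate the lemma as the uniform $B_m$-estimate and add the rescaling step, or redo the subsequent Cauchy-sequence argument pointwise with the weight carried along.
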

\begin{proof} The function is well defined on 
$$\left\{z:\:|z + z_0| < 1\right\} \times \left\{t:\:|t| < (1-\|\mu\|_\infty)\|\nu\|_\infty^{-1}\right\}\;.$$
We restrict it to a product of discs, or more precisely to $\bb D_r (z_0) \times \bb D_\epsilon$ where $r:=\textrm{dist}(z_0, \partial \bb D)$ and $\epsilon$ is any real number such that the norm of $\mu + t \nu$ is bounded by $K<1$  for $t \in \bb D_\epsilon$. Let $M$ be the M\"obius transformation obtained by composing the translation $z\mapsto z - z_0$ with the dilatation $z \mapsto r^{-1} z$. The disc $\bb D_r (z_0)$ is mapped onto the unit disc by $M$. Define the compositions 
$$ \mu^\bullet := \mu \circ M \;, \qquad \nu^\bullet := \nu \circ M\;.$$
Now in general, the Beltrami differential of a composition is given by 
\begin{eqnarray*} \label{ChainRuleBeltramiDifferential}
\hat \mu(g\circ f) = \frac{\hat \mu(f) + (\hat \mu(g) \circ f)\cdot(\overline{\partial f} /\partial f)}{1+(\hat \mu(g) \circ f)\cdot\overline{\hat \mu(f)}\cdot(\overline{\partial f}/\partial f)}\;,
\end{eqnarray*} 
and hence, because $\hat \mu(M) = 0$, we have
$$ \hat \mu ( w^{\mu + t \nu}\circ M) = \hat \mu(w^{\mu + t \nu}) \circ M \frac{\overline {M'}}{M'} = (\mu + t \nu)\circ M \frac{\overline {M'}}{M'}\;.$$
On the other hand,
\begin{eqnarray*}
\bar \partial (w^{\mu + t \nu} \circ M) &=& \bar \partial w^{\mu + t \nu} \circ M \cdot \overline{M'}\\
\partial (w^{\mu + t \nu} \circ M) &=& \partial w^{\mu + t \nu} \circ M \cdot M'\;,
\end{eqnarray*}
so $w^{\mu + t \nu} \circ M =: w^{\mu^\bullet + t\nu^\bullet}$ solves the Beltrami equation (this is a slight abuse of notation which will only be used in this proof: By previous conventions, $w^{\mu^\bullet + t\nu^\bullet}$ should be used to denote the \emph{1-point normalized} solution of the Beltrami equation for the coefficient $\mu^\bullet + t\nu^\bullet$, and in general, this is not the same as $w^{\mu + t \nu} \circ M$) for the coefficient $\mu^\bullet + t \nu^\bullet$. We remarked in Section \ref{Bersemb} that the family 
$$\cal F_\nu:= \{w^{\mu + t \nu}\:: \: t \in \bb D_\epsilon\}$$ 
is a normal family of K-qc mappings. Such a family is also equi-H\"older continuous on compact sets (\cite{QA}, Ch. II.5),
$$ |f(z_1)-f(z_2)| \leq C |z_1-z_2|^{1/k}\;, \qquad \forall z_1,z_2 \in K \subset \bb C \; \textrm{and}\; f \in \cal F_\nu\;,$$
which results in a bound
$$ |f(z)| \leq C' \qquad \forall \:z \in \bb D_r(z_0)\;.$$
This also implies the same bound on the values of the functions of the family $\cal F^\bullet_\nu:=\{w^{\mu^\bullet + t \nu^\bullet}\:: \: t \in \bb D_\epsilon\}$, and this bound on the family on the boundary $\partial \bb D$ of the unit disc yields a bound on all the derivatives at the origin by the Cauchy estimates:
$$ \left|\frac {d^m}{dz^m}w^{\mu^\bullet + t \nu^\bullet}(0)\right| \leq C_m\;, \qquad \forall\: t \in \bb D_\epsilon\;.$$
Now because $Q[f]$ is a polynomial in the first $N$ derivatives of $f$ and $(f')^{-1}$, the bounds $C_m$ induce a bound on the value of the image of the operator $Q$ applied to $f$ at the origin,
$$|Q[f](0)| \leq N_m\;, \qquad \forall f \in \cal F_\nu\;.$$
By the transformation behaviour with respect to precomposition with a disc automorphism, 
$$ Q[w^{ \mu^\bullet + t \nu^\bullet}](0) = Q[w^{\mu + t \nu}\circ M](0) = Q [w^{\mu + t \nu}](z_0) \cdot r^{m}\;,$$
so we get a pointwise estimate for the expression in the $B_{m}(\bb D)$-norm,
$$ \lambda_{\bb D}^{-m}(z_0) |\tilde \beta^Q(\mu + t \nu)(z_0)| \leq \lambda_{\bb D}^{-m}(z_0) r^{-m} N_n\;.$$
If we combine this with a well-known and fundamental estimate on the Poincar\'e density \cite{AFaKG},
\be \label{FEoPD}
   1\geq \lambda_{\bb D}(z_0) \textrm{dist}(z_0, \bb D) \geq \frac{1}{4}\;,
\ee
(where the latter one is only valid for domains not containing infinity) and $r \leq \textrm{dist}(z_0, \bb D)$ we arrive at
$$ \lambda_{\bb D}^{-m}(z_0) |\tilde \beta^Q(\mu + t \nu)(z_0)| \leq 4^{m} N_m\;,$$
i.e., the norm of $\tilde \beta^Q$ is a locally bounded function. Observe that this works because the power of $r$ and $\lambda_{\bb D}$ are precisely the same. In any other case, there would be no uniform bound. \end{proof}
\begin{lemma}
	The Gateaux derivative of $\tilde \beta^Q(\mu + t \nu)$ as a function of $t$ exists at $t=0$ and is finite for all $\mu \in \mathrm{Belt}(\bb D^c,   \bb 1)$ and $\nu \in L^\infty(\bb D^c)$.
\end{lemma}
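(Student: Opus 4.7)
The plan is to upgrade the local boundedness from Lemma \ref{BetaLocallyBounded} to Gateaux differentiability in the $B_m(\bb D)$-norm by exploiting fiberwise holomorphy in $t$ via Cauchy's integral formula.

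First, I would verify that for each $z_0 \in \bb D$ the scalar function $g_{z_0}(t) := \tilde \beta^Q(\mu + t\nu)(z_0)$ is holomorphic in $t$ on some fixed disc $\bb D_\epsilon$. The parametric regularity recalled in Section \ref{Bersemb} gives holomorphic dependence $t \mapsto w^{\mu+t\nu}(z)$ for each fixed $z$, and Cauchy's integral formula in the $z$-variable, applied on a small disc around $z_0$ contained in $\bb D$, transfers this holomorphy in $t$ to every $z$-derivative $\partial_z^k w^{\mu+t\nu}(z_0)$ for $1 \leq k \leq N$. Since $w^{\mu+t\nu}|_{\bb D}$ is schlicht, $(w^{\mu+t\nu})'(z_0)$ never vanishes, hence $((w^{\mu+t\nu})'(z_0))^{-1}$ is also holomorphic in $t$. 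As $Q[f]$ is polynomial in $f'', \ldots, f^{(N)}$ and $(f')^{-1}$, it follows that $g_{z_0}(t)$ is holomorphic in $t$.

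Second, I would couple this fiberwise holomorphy with the uniform estimate $|g_{z_0}(s)|\,\lambda_{\bb D}^{-m}(z_0) \leq C$ for $|s| \leq \epsilon$ and $z_0 \in \bb D$ supplied by the previous lemma. Cauchy's formula on the circle $|s| = \epsilon/2$ yields the integral identity
$$g_{z_0}(t) - g_{z_0}(0) - t\,g_{z_0}'(0) \;=\; \frac{t^2}{2\pi i}\oint_{|s|=\epsilon/2}\frac{g_{z_0}(s)}{s^2(s-t)}\,ds,$$
and combined with the $z_0$-uniform bound on $g_{z_0}(s)$ it produces estimates $|g_{z_0}'(0)|\,\lambda_{\bb D}^{-m}(z_0) \leq 2C/\epsilon$ and $\bigl|(g_{z_0}(t)-g_{z_0}(0))/t - g_{z_0}'(0)\bigr|\,\lambda_{\bb D}^{-m}(z_0) \leq K|t|$, both uniform in $z_0$, with $K$ depending only on $C$ and $\epsilon$. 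The first bound shows that the candidate limit $z\mapsto g_z'(0)$ lies in $B_m(\bb D)$, while the second shows that the difference quotient converges to it in the $B_m(\bb D)$-norm. Together, these give Gateaux differentiability at $t=0$ with a finite derivative.

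The main subtlety is that the power $\lambda_{\bb D}^{-m}$ is absorbed uniformly in $z_0$ on both sides of the Cauchy estimates; this works only because the local boundedness of the previous lemma already encodes the correct hyperbolic weight. Beyond that, no substantive analytic obstacle remains: holomorphic dependence of normalized quasiconformal solutions on the Beltrami coefficient is background material, and the rest is a routine Cauchy-formula manipulation.
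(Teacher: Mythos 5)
Your proposal is correct and follows essentially the same route as the paper: fiberwise holomorphy of $t\mapsto\tilde\beta^Q(\mu+t\nu)(z_0)$, the $z_0$-uniform weighted bound from the local-boundedness lemma, and Cauchy's integral formula on a circle $|s|=\epsilon/2$ to upgrade pointwise convergence of the difference quotients to convergence in the $B_m(\bb D)$-norm. The only cosmetic difference is that you use the explicit second-order Taylor remainder to identify the limit as $z\mapsto g_z'(0)$ directly, whereas the paper first derives a Lipschitz bound for $\phi(t,z)$, then one for the difference quotient, and concludes via a Cauchy-sequence argument in the Banach space $B_m(\bb D)$.
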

\begin{proof}
Let us abbreviate the function $\tilde \beta^Q(\mu + t \nu)(z)$ by $\phi(t,z)$, and let $\epsilon$ be as in the proof of the previous lemma. For fixed $z$, this is a holomorphic function of $t$, since $w^{\mu + t \nu}(z)$ is holomorphic in $t$ and $Q$ leaves the regularity of the $t$-dependence unaltered because of its polynomial structure. By Cauchy's integral formula we can estimate for $|t| < \epsilon$
$$ |\phi(t,z) - \phi(0,z)|\leq \frac{1}{2 \pi} \textrm{sup}_{|\eta| = \epsilon}|\phi(\eta,z)| \int_{|\eta|= \epsilon} \left|\frac{1}{\eta - t }-\frac{1}{\eta}\right| d\eta\;.$$
Now, by Lemma \ref{BetaLocallyBounded} the quantity $\textrm{sup}_{|\eta| = \epsilon}|\phi(\eta,z)|$ is locally (independent of $z$) majorized by $4^{m} N_m$. Moreover, we are interested in $\phi(t,z)$ near the origin, so we can restrict $t$ to the disc $\bb D_{\frac \epsilon 2}$. The following estimate,
$$  \int_{|\eta|= \epsilon} \left|\frac{1}{\eta - t_1}-\frac{1}{\eta-t_2}\right| d\eta \leq \frac {8 \pi}{\epsilon} |t_1 - t_2|\;,$$
is straightforward for $t_i \in \bb D_{\frac \epsilon 2}$, and we use it to obtain
\begin{eqnarray}
	\label{dsfdsf} |\phi(t,z) - \phi(0,z)|\leq 2 \cdot 4^{m+1} N_m \frac{|t|}{\epsilon^2}\;,
\end{eqnarray}
which in other words says that $\tilde \beta_n$ is locally Lipschitz. But we want a little more. For this, let us denote the difference quotient of $\phi(t,z)$ at $t=0$ by $\psi(t,z)$. This can be estimated in the same way with the help of \eqref{dsfdsf},
\begin{eqnarray*}
	|\psi(t_1,z)- \psi(t_2, z)| &\leq& \frac{1}{2\pi}\int_{|\eta|= \epsilon}|\psi(\eta,z)|\left|\frac{1}{\eta-t_1}- 	\frac{1}{\eta - t_2}\right|d\eta\\
	&\leq&\frac{2 \cdot 4^{m+2} N_m}{\epsilon^3} |t_1 - t_2| \;.
\end{eqnarray*}
Hence taking a sequence $t_i \rightarrow 0$, the sequence 
$$t_i^{-1}\left(\tilde \beta^Q(\mu + t_i \nu) - \tilde \beta^Q(\mu)\right)\;,$$ is a Cauchy sequence in the Banach space $B_{m}(\bb D)$ and therefore converges to a unique definite element in $B_{m}(\bb D)$. This proves the lemma and concludes the proof of the theorem. 
\end{proof}
One might wonder at this point if there really exist differential operators satisfying the prerequisites of the theorem. We will see in Section \ref{HBM} that there indeed are, and we will study the holomorphic maps they induce in quite some detail. We also remark that \cite{MIOoRS} contains a rather complete classification of operators satisfying the prerequisites of the theorem.\\ \\
To understand the mappings in more detail, the next step is to look at their infinitesimal behaviour. To this end, let us determine a general formula for their differential at the origin.
\begin{thm}\label{ThmDifferential}
   Let $Q$ be as in Thm.~ \ref{higherBersholomorphic}, and let 
   $$M_1(Q[f])= \sum_{k,l} a_{k,l} \frac{f^{(k)}}{(f')^l}\;,$$
   be the part of the polynomial $Q[f]$ which consists of monomials of degree one in $f'', \ldots, f^{(N)}$.
   Then the derivative $D_0\tilde \beta^Q$ at the origin of $L^\infty(\bb D^c)$ is given by the bounded linear operator
   $$ D_0\tilde \beta^Q: L^\infty(\bb D^c) \rightarrow B_{m}(\bb D)\;, \quad \nu \mapsto \sum a_{k,l} \frac {(-1)^{k}k!}{\pi} \int_{\bb D^c} \frac{\nu(\eta)}{(z-\eta)^{k+1}}d^2\eta\;.$$
\end{thm}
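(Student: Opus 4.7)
The plan is to expand $\tilde\beta^Q(t\nu) = Q[w^{t\nu}]|_{\bb D}$ in powers of $t$ and read off the linear coefficient. By the holomorphic dependence of the Beltrami solution on the coefficient, we have $w^{t\nu}(z) = z + t\,w_1^\nu(z) + O(t^2)$ with $w_1^\nu$ given by the closed formula \eqref{1stOrderBeltSol}. Since the monomials of $Q[f]$ are polynomial in $f'',\ldots,f^{(N)},(f')^{-1}$, and at $f=\mathrm{id}$ one has $f^{(k)}=0$ for $k\geq 2$ and $f'=1$, any monomial of total degree $\geq 2$ in $\{f'',\ldots,f^{(N)}\}$ contributes only $O(t^2)$ to the substitution $Q[\mathrm{id}+tw_1^\nu+\cdots]$. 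Hence only the part $M_1(Q[f])$ is responsible for $D_0\tilde\beta^Q(\nu)$.

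For a single admissible monomial $a_{k,l}\,f^{(k)}/(f')^l$ with $k\geq 2$, a direct calculation gives
\[
\frac{d}{dt}\bigg|_{t=0}\frac{(\mathrm{id}+th)^{(k)}}{((\mathrm{id}+th)')^l}=h^{(k)},
\]
since the numerator is $th^{(k)}$ and the denominator is $1+O(t)$. Applying this with $h=w_1^\nu$ reduces the theorem to the computation of $(w_1^\nu)^{(k)}$ for $k\geq 2$.

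Here the key observation is the partial fraction identity
\[
-\frac{z(z-1)}{w(w-1)(w-z)}=\frac{1-z}{w}+\frac{z}{w-1}-\frac{1}{w-z},
\]
which, inserted into \eqref{1stOrderBeltSol}, writes $w_1^\nu(z)$ as an affine function of $z$ plus the Cauchy-type integral $-\frac{1}{\pi}\int_{\bb D^c} \nu(w)/(w-z)\,d^2w$. For $k\geq 2$ the affine part is annihilated by $\partial_z^k$, and exchanging $\partial_z^k$ with the integral (justified by $\nu \in L^\infty(\bb D^c)$ together with smoothness and uniform decay of the kernel $1/(w-z)^{k+1}$ on compact subsets of $\bb D$) gives $\partial_z^k(w-z)^{-1}=k!(w-z)^{-(k+1)}=(-1)^{k+1}k!(z-w)^{-(k+1)}$. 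Collecting the signs produces exactly the stated formula. The boundedness of the resulting linear map $L^\infty(\bb D^c)\to B_m(\bb D)$ is then inherited from the holomorphicity established in Theorem~\ref{higherBersholomorphic}, since a holomorphic map between complex Banach spaces has bounded Gateaux derivative.

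The main obstacle I anticipate is the bookkeeping around the normalization of $w_1^\nu$: the closed formula \eqref{1stOrderBeltSol} is adapted to a three-point (rather than one-point) normalization, so the two choices differ by an infinitesimal M\"obius correction, hence by a polynomial of degree $\leq 2$ in $z$. For $k\geq 3$ this discrepancy vanishes under the $k$-th derivative; for $k=2$ it contributes a constant term, so one should verify that these constants cancel in $\sum_{k,l}a_{k,l}(w_1^\nu)^{(k)}$ -- for example via post-composition invariance of $Q$ under $\mathrm{PSL}(2,\bb C)$, which holds for the classical and higher Schwarzians treated later in the paper -- or are absorbed by the implicit M\"obius freedom in passing between representatives of the Beltrami class.
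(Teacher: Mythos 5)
Your proposal follows essentially the same route as the paper's proof: expand in $t$, observe that only the degree-one part $M_1(Q[f])$ survives linearization at the identity (the same two computations with $\frac{d}{dt}$ of a single monomial and of a product of higher derivatives), and then differentiate the closed formula \eqref{1stOrderBeltSol} term by term after the same partial-fraction decomposition \eqref{OtherNorm}. The normalization caveat you raise is genuine and is exactly the point the paper itself only addresses later (in the remark after Theorem \ref{HBDiffSurj}, where the discrepancy between normalizations is noted to be a polynomial of low degree in $z$ that is annihilated by the derivatives which actually occur); your observation that $k=2$ would need an extra cancellation is fair, but for the operators of interest the monomials in $M_1$ have $k\geq 3$, so the issue does not arise there.
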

\begin{proof}
Let $w$ be the coordinate on $\bb D$.  The quasiconformal solution to the trivial Beltrami differential 1-point-normalized at $0$ is $f(w,0) = w$, which we will simply denote by $f(w)$. Fix $\nu \in L^\infty(\bb D^c)$. The 1-point-normalized solutions to the Beltrami equation for $t\nu$ with $t \in \bb D_{1/\|\nu\|}$ will be denoted by $f(w,t)$. Further, let $\phi(w,t):= Q[f(w,t)]$ denote the image in $B_{m}(\bb D)$. We are interested in the derivative of $\phi$ with respect to $t$ at $t=0$. We denote $t$-derivation by a dot, $w$-derivation by a prime and p-th order $w$ derivatives by ${}^{(p)}$. Now since $f^{(p)}(w)\equiv 0$ for $p\geq 2$ and $f(w)'\equiv 1$, we get for $p\geq 2$ and $q \geq 1$
\begin{eqnarray*}
   \frac{d}{dt}\left(\frac{f(w,t)^{(p)}}{(f(w,t)')^q}\right)_{|t=0} = \frac{\dot f^{(p)} 
   (f')^q - q (f')^{q-1} \dot f' f^{(p)}}{(f')^{2q}}_{|t=0}
   =\dot f^{(p)}(w)\;.
\end{eqnarray*}   
while when the numerator contains products of derivatives of order $\geq 2$,
\benn
   \frac{d}{dt}& \left(\frac{f(w,t)^{(k)}f(w,t)^{(l)}}{(f(w,t)')^q}\right)_{|t=0} \\&= \frac{\left(\dot f^{(k)}f^{(l)} + f^{(k)}\dot f^{(l)} \right) (f')^q - q (f')^{q-1} \dot f' f^{(k)}f^{(l)}}{(f')^{2q}}_{|t=0}\equiv0\;.
\eenn 
Hence only the monomial terms in the numerator survive, and the $t$-derivative of such a monomial at $t=0$ yields
$$ \frac{d}{dt} \phi(w,t)_{|t=0} = \sum a_{k,l} \dot f^{(k)}(w)\;.$$
Now since $f(w,t)$ is holomorphic in $t$, we can expand it as we did in Section \ref{Bersemb},
$$ f(w,t) = f(w) + t f_1(w) + O(t^2)\;,$$
from which of course follows that $\dot f(w,0) = f_1(w)$. We have an explicit representation for the term $f_1$ (see equation \eqref{1stOrderBeltSol}) given by
\begin{equation}\label{EffOne}f_1(w) = -\frac{w(w-1)}{\pi}\int_{\bb C}\frac{\nu(\eta)}{\eta(\eta-1)(\eta - w)}d^2\eta\;.\end{equation}
The integral converges absolutely since the modulus of the integrand is of order $\cal O(|\eta|^{-3})$ for $|\eta| \rightarrow \infty$, hence the $w$-derivation can be moved inside the integral. The part depending on $w$ can be rewritten conveniently as
\begin{equation} \label{OtherNorm} \frac{w(w-1)}{\eta(\eta - 1)(\eta - w)} = \frac{1}{\eta - w} - \frac{w}{\eta - 1} + \frac{w-1}{\eta}\;. \end{equation}
Since the last two terms are linear in $w$, they don't contribute to derivatives of order higher than one, and we can read off the $k$-th $w$-derivative of $\dot f$,
$$ \dot f^{(k)}(w) = \frac {(-1)^{k}k!}{\pi} \int_{\bb D^c} \frac{\nu(\eta)}{(\eta - w)^{k+1}}d^2\eta\;,$$
which then implies that
$$  \frac{d}{dt} \phi(z,t)_{|t=0} = \sum a_{k,l} \frac {(-1)^{k}k!}{\pi} \int_{\bb D^c} \frac{\nu(\eta)}{(z-\eta)^{k+1}}d^2\eta\;,$$
which, of course, is exactly $D_0\tilde \beta^Q(\nu)$, since we know the Gateaux derivative exists. 
\end{proof}
\section{Higher Schwarzian Derivatives and Higher Bers Maps} \label{HBM}
We remarked above that there do exist operators satisfying the prerequisites of our main theorem \ref{higherBersholomorphic}. In this section we introduce two series of such operators in particular, which we call the \emph{A and B series of higher Schwarzians} (Def.~ \ref{HSDDef}), as well as their induced holomorphic mappings, which we call \emph{higher Bers maps} because of their close analogy to the Bers embedding. These operators are good novel examples of non-homogeneous operators that nevertheless induce mappings of Teichm\"uller space. We go into this point more precisely in Section \ref{OtherWork}, where we also review results known on homogeneous operators.
\subsection{Higher Schwarzian Derivatives}\label{HSD}
There are several generalizations of Schwarzian derivatives defined
in the literature. We will consider two particular series of such generalizations, which we call the $A$ and $B$ series. The former are quite recent and were introduced by Eric Schippers in \cite{DTfhoSDoUF}, while the latter have been known for longer time and can be found, for instance, in \cite{CEfNPotDoUF} and \cite{MIOoRS}.
\begin{dfn}\label{HSDDef}
   For any interger $n \geq 3$, the $A$ and $B$ series of \underline{higher} \underline{Schwarzians} $\sigma^\bullet_{n}: \cal M_{\mathrm{li}}( D)\rightarrow        \cal O( D)$ are defined by
   \begin{equation}\begin{aligned}\label{DefSchwarz} 
   				\sigma^A_3[f] &:= S_f\;, \qquad \sigma^A_{n+1}[f] := \sigma^A_n[f]' - (n-1) \frac{f''}{f'}\sigma^A_n[f]\\
   				\sigma^B_n[f] &:= -2(f')^{\frac n2 - 1} \frac{d^{n-1}}{dz^{n-1}}\left((f')^{1-\frac n2}\right)\;,
   				\end{aligned}\end{equation}
   where the same branch of the square root of $f$ is assumed in both appearances in the definition of $\sigma^B_n[f]$ for odd $n$.
\end{dfn}
They are indeed well defined on the space of meromorphic locally injective functions, since one easily can convince oneself that only powers of the first derivative of $f$ appear in the denominator of the expression for $\sigma^\bullet_{n}[f]$ (see also Lemma \ref{StructHSD} below). In general, if we refer to an operator of either series, we will write $\sigma_n^\bullet$. Many statements can be obtained for both types of operators simultaneously. However, to do so we introduced slightly different conventions for the $\sigma^B_n$ than the ones in \cite{CEfNPotDoUF} and \cite{MIOoRS}, where the operators are denoted by $S_n$. More precisely, they are related by $\sigma^B_n = -2 S_{n-1}$. Let us write down the first few operators of both series explicitly. For $\sigma^A$, one obtains via the recursion formula 
\benn
&\sigma^A_4[f] = \frac{f''''}{f'}-6\frac{f'''f''}{(f')^2}+6\left(\frac{f''}{f'}\right)^3\\
& \sigma^A_5[f] = \frac{f'''''}{f'}-10\frac{f''''f''}{(f')^2}-6\left(\frac{f'''}{f'}\right)^2+48\frac{f'''(f'')^2}{(f')^3}-36\left(\frac{f''}{f'}\right)^4\;.
\eenn
We warn the reader that there are two typos in the expression for $\sigma^A_5[f]$ in the original paper \cite{DTfhoSDoUF}. The first operators of the $B$-series are given by
\benn
	&\sigma^B_3[f] = \frac{f'''}{f'}-\frac 32 \left(\frac{f''}{f'}\right)^2\\
  &\sigma^B_4[f] = 2 \frac{f^{''''}}{f'} + 12 \frac{f''' f''}{(f')^2} + 12 \frac{(f'')^2}{(f')^3}\\
  &\sigma^B_5[f] = 3 \frac{f^{'''''}}{f'} - \frac{15}{2} \frac{\left((f''')^2 + 4 f''''f''\right)}{(f')^2}
  + \frac{315}4 \frac{f'''(f'')^2}{(f')^3} - \frac{945}{8} \left(\frac{f''}{f'}\right)^4\;.
\eenn
In particular, $\sigma^\bullet_3[f] = S_f$, so calling them higher Schwarzians is justified.\\ \\
For the derivative of the higher Bers maps, which are holomorphic maps of Teichm\"uller spaces constructed with the help of the higher Schwarzians and will be introduced in a moment, we need the following structural statement about the higher Schwarzians.
\begin{lemma} \label{StructHSD}The expressions $\sigma^\bullet_n[f]$ are polynomials in  $f'',\ldots,f^{(n)}$ and $(f')^{-1}$, with integer coefficients for the A series (except for $\sigma_3^A[f]$, of course) and rational coefficients for the B series. The only term of $\sigma^\bullet_n[f]$ where the numerator is a monomial of degree one in $f'',\ldots,f^{(n)}$ is $c^\bullet(n) \cdot f^{(n)}/f'$ where $c^A(n) = 1$ and $c^B(n) = n-2$.
\end{lemma}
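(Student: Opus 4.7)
\emph{Proof proposal.} I would prove the two parts of the lemma in parallel: the A series by induction on $n$ using the recursion \eqref{DefSchwarz}, and the B series by a direct Fa\`a di Bruno expansion. In both cases the claim about the linear piece reduces to identifying which monomials carry exactly one factor $f^{(j)}$ with $j \geq 2$, so the key bookkeeping device is the ``derivative degree'' $d$ of a monomial $c\cdot \prod_i (f^{(j_i)})^{a_i}/(f')^p$ (with each $j_i \geq 2$), defined as $d := \sum_i a_i$.

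For the A series, the base case $n = 4$ is a direct computation from $\sigma^A_3 = S_f$: the half-integer coefficient in the Schwarzian is cancelled upon applying the recursion, yielding $\sigma^A_4$ with integer coefficients and linear piece $f^{(4)}/f'$. For the inductive step, if $\sigma^A_n$ has the claimed form with integer coefficients, then so does $(\sigma^A_n)'$ (since $(f^{(j)})' = f^{(j+1)}$ and $((f')^{-p})' = -p(f')^{-p-1}f''$ both introduce only integer coefficients), and consequently so does $\sigma^A_{n+1}$. To isolate the degree-one terms I observe that differentiating a degree-$d$ monomial produces only monomials of degree $d$ or $d+1$, while multiplication by $f''/f'$ raises the degree by exactly one. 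Hence the sole degree-one contribution to $\sigma^A_{n+1}$ comes from differentiating the unique degree-one monomial of $\sigma^A_n$, which by induction is $f^{(n)}/f'$: this yields $f^{(n+1)}/f'$ at degree one (with the other piece $-f^{(n)}f''/(f')^2$ of degree two), giving $c^A(n+1) = 1$.

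For the B series, set $\alpha := 1 - n/2$ and $\phi := f'$, so that $\sigma^B_n[f] = -2\phi^{-\alpha}(\phi^{\alpha})^{(n-1)}$. Fa\`a di Bruno's formula applied to the composition $z \mapsto \phi(z)^\alpha$ yields
\[(\phi^\alpha)^{(n-1)} = \sum_{\lambda} \frac{(n-1)!}{\prod_j (j!)^{a_j}\, a_j!}\, \alpha(\alpha-1)\cdots(\alpha-|\lambda|+1)\, \phi^{\alpha-|\lambda|}\prod_j (\phi^{(j)})^{a_j},\]
where the sum ranges over tuples $\lambda = (a_1,a_2,\ldots)$ of non-negative integers satisfying $\sum_j j\, a_j = n-1$ and $|\lambda| := \sum_j a_j$. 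Multiplying by $-2\phi^{-\alpha}$ cancels the fractional power of $f'$ and, after substituting $\phi^{(j)} = f^{(j+1)}$, leaves a polynomial in $f^{(2)},\ldots,f^{(n)}$ and $(f')^{-1}$ with rational coefficients (the falling factorials in $\alpha$ being integers when $n$ is even and half-integers when $n$ is odd). The degree-one part corresponds to $|\lambda| = 1$, which forces $a_{n-1} = 1$ and all other $a_j = 0$; the coefficient then collapses to $\alpha$, so the corresponding contribution to $\sigma^B_n[f]$ is $-2\alpha \cdot f^{(n)}/f' = (n-2)\, f^{(n)}/f'$, as claimed.

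The main obstacle, modest as it is, lies in the degree bookkeeping for the A series: one must verify that \emph{no} degree-one monomial other than $f^{(n+1)}/f'$ appears in $(\sigma^A_n)'$, which hinges on the inductive hypothesis that $\sigma^A_n$ itself has a \emph{unique} degree-one monomial. The B series computation is essentially a Fa\`a di Bruno bookkeeping exercise with no conceptual difficulty once the linear part is identified with the unique partition satisfying $|\lambda| = 1$.
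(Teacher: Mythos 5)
Your proposal is correct. For the $A$ series you argue exactly as the paper does: induction on the recursion, with the observation that the product term $\frac{f''}{f'}\sigma^A_n[f]$ raises every monomial's degree and so cannot contribute a degree-one term, while differentiation of $\sigma^A_n[f]$ produces a degree-one monomial only from the unique degree-one monomial $f^{(n)}/f'$; your explicit ``derivative degree'' bookkeeping is just a cleaner packaging of the paper's remark that a product of two polynomials without constant term has no degree-one monomial. One small point worth making explicit in your induction: you need the hypothesis that $\sigma^A_n[f]$ contains no degree-zero monomial $c\,(f')^{-p}$ (whose derivative \emph{would} be degree one); this holds because $\sigma^A_3=S_f$ has none and both operations in the recursion preserve minimum degree $\geq 1$, but it should be stated as part of what you carry through the induction. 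For the $B$ series you diverge in presentation: the paper computes the first two derivatives of $(f')^{1-n/2}$ by hand and then argues informally that the pattern persists, isolating the linear term as the one obtained by always applying the product rule to the factor $(f')^{-n/2}f^{(k)}$. Your Fa\`a di Bruno expansion replaces this with a closed formula in which the linear part is precisely the partition with $|\lambda|=1$, and the coefficient collapses to $\alpha=1-\tfrac n2$, giving $-2\alpha f^{(n)}/f'=(n-2)f^{(n)}/f'$. This is more rigorous than the paper's ``from which we see what happens in general,'' and it buys you the rationality of the coefficients for free (the multinomial factor is an integer and the falling factorial of $\alpha$ is at worst a dyadic rational), at the cost of invoking a combinatorial identity the paper avoids.
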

\begin{proof} Let us first consider $\sigma^A_n[f]$. The lemma is certainly true for $n\leq 5$ by the explicit formulas above. Now $\sigma^A_{n+1}[f]$ is a sum of the derivative of $\sigma_n^A[f]$ and the product $f''/f' \cdot \sigma^A_n$. By induction, the latter term being a product of two polynomials in $f'',\ldots,f^{(n)}$ and $(f')^{-1}$ is again a polynomial in these variables, and since each of the polynomials has no constant term, this product cannot contain a monomial of degree one. Also, the derivative of $\sigma_n^A[f]$ is of this structure by the quotient rule of differentiation and a monomial of degree one can only be obtained by differentiating the term $ f^{(n)}/f'$. The monomial obtained in this way is $f^{(n+1)}/f'$. Since $\sigma^A_4[f]$ has integer coefficients only, so do all $\sigma_n^A[f]$ with $n \geq 4$ because the recursive relation does not produce rationals.\\ \\
Let us now consider the $\sigma^B_n$. Let us compute the first derivatives in the definition of $\sigma^B_n[f]$,
\benn
 \frac{d^{n-1}}{dz^{n-1}}& \left((f')^{1- \frac n2}\right)= \left(1 - \frac n2\right) \frac{d^{n-2}}{dz^{n-2}}\left((f')^{1- \frac n2 -1} f''\right)\\
&=\left( 1- \frac n2\right)\frac{d^{n-3}}{dz^{n-3}}\left((1-{\textstyle \frac n2} - 1) (f')^{1 - \frac n2 - 2} (f'')^2 + (f')^{-\frac n2} f'''\right)\;,
\eenn
from which we see what happens in general. Namely, the result of taking all derivatives will be a sum of terms, each of which is a product of $f'', \ldots, f^{(n)}$ and $(f')^{-\frac n2 - k}$ with $0 \leq k \leq n-2$. Hence, after multiplying through with $(f')^{\frac n2 -1}$, the summands are products of $f'', \ldots, f^{(n)}$ and $(f')^{-(k+1)}$. Moreover, the only term with numerator a monomial of degree 1 in $f'', \ldots, f^{(n)}$ is the second summand obtained by the product rule of differentiation applied to the term $(f')^{-\frac n2} f^{(k)}$. The resulting term after $n-1$ derivatives in $\sigma_n^B[f]$ is then 
$$(-2) \cdot (1- {\textstyle \frac n2} ) (f')^{\frac n2 -1} (f')^{-\frac n2} f^{(n)} = (n-2) \frac{f^{(n)}}{f'}\;.$$
\end{proof}
The coefficients in both series are indeed rational and not integer. Our examples above already show this for the $B$ series. The first non-integer coefficient in the $A$ series appears in $\sigma^A_6[f]$.\\ \\
There is a very useful reformulation of the recursion relation for $\sigma_n^A$, which we want to consider next. It will be used several times later on.
\begin{lemma}\label{AltDefHS}
   The defining relation (\ref{DefSchwarz}) for the operators in the $A$ series of higher Schwarz\-ians can be rewritten as follows
   \begin{eqnarray} \label{AltDefSchw} \frac{\sigma^A_{n+1}[f]}{(f')^{n-1}} = \left( \frac{\sigma^A_{n}[f]}{(f')^{n-1}}\right)'\;.\end{eqnarray}
   Moreover, if $f$ is schlicht, this is equivalent to 
   \begin{eqnarray} \label{SchwInv} \sigma^A_n[f]=\left(\frac{d^{n-3}}{dz^{n-3}} S_{f^{-1}} \right)\circ f \cdot (f')^{n-1}\;.\end{eqnarray} 
\end{lemma}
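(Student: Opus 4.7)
The proof splits naturally into two parts, the second of which is an induction powered by the first.

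For the identity (\ref{AltDefSchw}), my plan is to work directly from the recursive definition. Applying the quotient rule to the right-hand side gives
$$\left(\frac{\sigma^A_n[f]}{(f')^{n-1}}\right)' = \frac{(\sigma^A_n[f])'}{(f')^{n-1}} - (n-1)\,\frac{f''\,\sigma^A_n[f]}{(f')^{n}}\,,$$
so after multiplying through by $(f')^{n-1}$ the numerator becomes $(\sigma^A_n[f])' - (n-1)(f''/f')\sigma^A_n[f]$, which is exactly $\sigma^A_{n+1}[f]$ by the defining recursion (\ref{DefSchwarz}). Note that this piece does not require schlichtness; it is valid on the full domain of locally injective meromorphic functions.

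For (\ref{SchwInv}), the plan is to induct on $n \geq 3$ using (\ref{AltDefSchw}) as the engine. The base case $n=3$ reduces to the classical identity $S_f = (S_{f^{-1}} \circ f)(f')^2$ (up to a sign, as discussed below), which follows from applying the Schwarzian chain rule (\ref{trafoSchw}) to $f^{-1} \circ f = \mathrm{id}$; this is precisely where schlichtness enters, guaranteeing that $f^{-1}$ exists as a holomorphic inverse on $f(D)$. For the inductive step, assuming (\ref{SchwInv}) at level $n$, divide by $(f')^{n-1}$ to obtain $\sigma^A_n[f]/(f')^{n-1} = \bigl((S_{f^{-1}})^{(n-3)}\bigr) \circ f$, then differentiate in $z$ via the chain rule to get $\bigl((S_{f^{-1}})^{(n-2)}\bigr) \circ f \cdot f'$. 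By (\ref{AltDefSchw}) this equals $\sigma^A_{n+1}[f]/(f')^{n-1}$, and multiplying through by $(f')^{n-1}$ yields $\sigma^A_{n+1}[f] = \bigl((S_{f^{-1}})^{(n-2)}\bigr) \circ f \cdot (f')^{n}$, which is (\ref{SchwInv}) at level $n+1$.

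There is no genuine obstacle here: once (\ref{AltDefSchw}) is in hand, the induction essentially writes itself, and everything reduces to a bookkeeping exercise combining the quotient and chain rules. The one subtle point worth verifying carefully is the sign convention in the base case: the chain rule applied to $f^{-1} \circ f = \mathrm{id}$ gives $S_f = -(S_{f^{-1}} \circ f)(f')^{2}$, so strictly speaking the induction yields (\ref{SchwInv}) with an overall factor of $-1$. This should either be absorbed into the statement or checked against the sign conventions adopted for $\sigma_n^A$; either way it is a cosmetic matter and does not affect the structural proof.
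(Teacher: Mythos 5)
Your proposal is correct and follows essentially the same route as the paper: the quotient rule applied to $\sigma^A_n[f]/(f')^{n-1}$ gives \eqref{AltDefSchw}, and \eqref{SchwInv} then follows by induction after expressing $S_{f^{-1}}$ via the chain rule applied to $f^{-1}\circ f=\mathrm{id}$. Your observation about the overall sign is well taken --- the paper's own displayed computation yields $-\bigl(\tfrac{d}{dz}S_{f^{-1}}\bigr)\circ f\cdot (f')^3 = \sigma^A_4[f]$, so \eqref{SchwInv} as stated is indeed off by a factor of $-1$, which is harmless for its later use (e.g.\ in Lemma~\ref{HomSol}).
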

\begin{proof}. By applying the quotient rule to (\ref{AltDefSchw}) we immediately get (\ref{DefSchwarz}) and thereby the first statement is proven. The second statement follows from writing the Schwarzian of the inverse function in terms of the Schwarzian of the function itself by applying the chain rule for Schwarzians to $f^{-1}\circ f$, 
$$ -S_{f^{-1}} \circ f = S_f \cdot (f')^{-2}\;.$$ 
If we take the derivative of this equation, we get
$$ -\left(\frac{d}{dz} S_{f^{-1}} \right)\circ f \cdot f' = \left(\frac{S_f}{(f')^2}\right)' = \frac{\sigma^A_4[f]}{(f')^2}\;,$$
and this proves (\ref{SchwInv}) for $n=4$. Inductively, we assume (\ref{SchwInv}) to be true for $n$, divide it by $(f')^{n-1}$ and take the derivative of it,
$$ \left(\frac{\sigma^A_n[f]}{(f')^{n-1}}\right)' = \left(\frac{d^{n+1-3}}{dz^{n+1-3}} S_{f^{-1}}\right) \circ f \cdot f'\;.$$
If we now multiply through with $(f')^{n-1}$, the left hand side equals $\sigma^A_{n+1}[f]$ by equation (\ref{AltDefSchw}) whereas the right hand side is the same as the right hand side of (\ref{SchwInv}) for the value $n+1$. 
\end{proof}
Neither series of higher Schwarzian derivatives has a nice chain rule, i.e., a closed formula for the value of the operator applied to a composition of functions. However, they do behave nicely when precomposed with M\"obius transformations. Observe that this is precisely the required formula in the main Theorem \ref{higherBersholomorphic}
\begin{lemma}\label{HSDMoebius}
The higher-order Schwarzian derivatives behave in the following way under precomposition with a M\"obius transformation $g$,
\begin{eqnarray}
    \sigma^\bullet_{n+1}[f \circ g] = (\sigma^\bullet_{n+1}[f] \circ g) (g')^{n}\;. \qquad
\end{eqnarray}
\end{lemma}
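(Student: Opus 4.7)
The plan is to prove each series separately: the $A$-series by induction on $n$ using the reformulation (\ref{AltDefSchw}), and the $B$-series by a direct calculation resting on the fact that $(g')^{-(n-1)/2} = (cz+d)^{n-1}$ is a polynomial in $z$ of degree $n-1$ for $g(z) = (az+b)/(cz+d) \in \psl$ with $ad-bc=1$.

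For the $A$-series, the base case $n=2$ follows from the classical chain rule $S_{f\circ g} = (S_f\circ g)(g')^2 + S_g$, which reduces to $(S_f\circ g)(g')^2$ because $S_g \equiv 0$ for M\"obius $g$. For the induction step, set $F := f\circ g$ and assume $\sigma^A_n[F] = (\sigma^A_n[f]\circ g)(g')^{n-1}$. Since $(F')^{n-1} = ((f')^{n-1}\circ g)(g')^{n-1}$, the $(g')^{n-1}$-factor cancels in the quotient appearing in (\ref{AltDefSchw}) and we obtain
$$\frac{\sigma^A_n[F]}{(F')^{n-1}} \;=\; \left(\frac{\sigma^A_n[f]}{(f')^{n-1}}\right)\circ g.$$
Differentiating in $z$ produces an extra chain-rule factor $g'$ on the right; applying (\ref{AltDefSchw}) on both sides and clearing denominators yields $\sigma^A_{n+1}[F] = (\sigma^A_{n+1}[f]\circ g)(g')^n$.

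For the $B$-series, I would substitute $F = f\circ g$ into the definition and use $(F')^\alpha = ((f')^\alpha\circ g)(g')^\alpha$ to get
$$\sigma^B_{n+1}[F] = -2\,((f')^{(n-1)/2}\circ g)(g')^{(n-1)/2}\cdot\frac{d^n}{dz^n}\!\left[((f')^{-(n-1)/2}\circ g)(g')^{-(n-1)/2}\right].$$
The essential ingredient is the intertwining identity
$$\frac{d^n}{dz^n}\bigl[(\phi\circ g)(g')^{-(n-1)/2}\bigr] \;=\; (\phi^{(n)}\circ g)(g')^{(n+1)/2}\qquad\forall\,g\in\psl,$$
valid for every holomorphic $\phi$, which I would prove by induction on $n$: the base case $n=1$ is the ordinary chain rule, and the induction step exploits that $(g')^{-1/2} = cz+d$ is affine (so Leibniz applied to it produces only two nonzero terms) together with the identity $g'' = -2c\,(g')^{3/2}$, which provides the precise cancellation needed to pass from $n$ to $n+1$. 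Applied with $\phi = (f')^{-(n-1)/2}$, the identity collapses the display above into $(\sigma^B_{n+1}[f]\circ g)(g')^n$.

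The main obstacle is the intertwining identity for the $B$-series: it encodes the delicate cancellation of chain-rule and weight terms that holds only for M\"obius $g$ and only with the half-integer weight $(n-1)/2$. Once it is in hand, both series are dispatched by algebraic bookkeeping; for the $A$-series the corresponding miracle is already packaged into (\ref{AltDefSchw}), which is why the inductive argument there is essentially formal.
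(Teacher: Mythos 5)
Your proof is correct, and it differs from the paper's in two instructive ways. For the $A$-series, the paper argues by induction directly from the recursion \eqref{DefSchwarz}, expanding $\sigma^A_{n+1}[f\circ g]$ term by term and matching against the target using the identity for $(f\circ g)''/(f\circ g)'$; you instead run the induction through the reformulation \eqref{AltDefSchw}, where the factor $(g')^{n-1}$ cancels in the quotient $\sigma^A_n[F]/(F')^{n-1}$ and the extra $g'$ drops out of the chain rule upon differentiation. This is legitimate (Lemma \ref{AltDefHS} is derived from the quotient rule alone, so there is no circularity) and is genuinely cleaner: the bookkeeping that the paper does by hand is absorbed into the reformulation. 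For the $B$-series, your ``intertwining identity'' is exactly Bol's lemma after the reindexing $n\mapsto n+1$: the statement $\frac{d^n}{dz^n}[(\phi\circ g)(g')^{-(n-1)/2}]=(\phi^{(n)}\circ g)(g')^{(n+1)/2}$ coincides with the paper's $f_2^{(n-1)}=(f_1^{(n-1)}\circ g)(g')^{n/2}$ for $f_2=(f_1\circ g)(g')^{1-n/2}$. The paper cites Bol and applies it; you rederive it, and your inductive proof is sound --- the cancellation between the term produced by $g''=-2c\,(g')^{3/2}$ and the single extra Leibniz term coming from the affine factor $cz+d$ is precisely what makes the induction close. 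So your argument is self-contained where the paper defers to a reference, at the cost of reproving a classical fact. Two small points: your base case label ``$n=2$'' is consistent with the $\sigma_{n+1}$ indexing (it is the classical Schwarzian chain rule with $S_g\equiv 0$), and for odd weights one should fix the branch $(g')^{-1/2}=cz+d$ once and for all, as you implicitly do.
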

\begin{proof}
The formula is proved by induction for the $A$ series. By \eqref{trafoSchw} we know it is true for $\sigma^A_3[f]=S_f$. For the induction step we compute,
\benn
    \sigma^A_{n+1}[f\circ g] &= \left(\sigma^A_{n}[f]\circ g \cdot (g')^{n-1}\right)' - (n-1)\frac{(f\circ g)''}{(f\circ g)'} \sigma^A_{n}[f\circ g]\\
    &= \left(\sigma^A_{n}[f]'\circ g\right) \cdot (g')^{n} + (n-1)\sigma^A_{n}[f]\circ g \cdot (g')^{n-2}\cdot g'' \\
    &\quad - (n-1) \frac{(f\circ g)''}{(f\circ g)'} (g')^{n-1}\sigma^A_{n}[f]\circ g \;.
\eenn
Comparing this with the desired result written out explicitly,
$$ (\sigma^A_{n+1}[f]\circ g)\cdot(g')^{n} = \left(\sigma^A_{n}[f]'\circ g - (n-1)\frac{f'' \circ g}{f'\circ g}\sigma^A_{n}[f]\circ g\right)\cdot (g')^{n}\;,$$
we find that the first term matches up fine while the second matches up, iff
$$ g'' - \frac{(f\circ g)''}{(f\circ g)'} g' = -(g')^2 \frac{f'' \circ g}{f'\circ g}\;. $$
But this is easily seen to be true, since
$$\frac{(f\circ g)''}{(f\circ g)'} = \frac{f''\circ g \cdot (g')^2 + f' \circ g \cdot g''}{f'\circ g \cdot g'}\;.$$
This concludes the proof for the $A$ series. As for the proof of the transformation behaviour of the $B$ series we reproduce the elegant proof of this fact contained in \cite{CEfNPotDoUF}. This proof uses a well-known lemma due to Bol \cite{IlD}.
\begin{lemma}
   Let $f_i \in \cal O(D)$ be related via $f_2 = (f_1 \circ g) (g')^{1-\frac n2}$ for some $g \in \mathrm{PSL}(2, \bb C)$. Then their $(n-1)$st derivatives are related by
   $$ f_2^{(n-1)} = (f_1^{(n-1)} \circ g) (g')^{\frac n2}\;.$$
\end{lemma}
We apply this lemma to the functions
$$ f_1 = (f')^{\frac n2 -1}\;, \qquad f_2 = \left((f \circ g)'\right)^{1-\frac n2} = \left(f'\circ g \cdot g'\right)^{1-\frac n2}\;, $$ where $g$ is any M\"obius transformation. Observe that their quotient is precisely $\sigma^B_n[f\circ g]$, which then immediately yields the desired transformation behaviour,
\begin{eqnarray*} \sigma^B_n[f\circ g] &=& \left((f \circ g)'\right)^{\frac n2-1} \frac{d^{n-1}}{dz^{n-1}}\left( \left(f \circ g)'\right)^{\frac n2-1}\right)\\ &=& \frac{f_2^{(n-1)}}{f_2} = \frac{f_1^{(n-1)} \circ g}{f_1 \circ g} (g')^{n-1} = (\sigma^B_n[f] \circ g) (g')^{n-1}\;.\end{eqnarray*}
\end{proof}
But what happens under postcompositions with M\"obius transformations? Neither series of operators has an invariance property with respect to postcompositions, but for certain M\"obius transformations, the A series behaves invariantly. Namely, observe that the inductive formula \eqref{DefSchwarz} can be written with the help of the \emph{pre-Schwarzian}, i.e., the operator given by $PS[f]:=f''/f'$,  
$$\sigma^A_{n+1}[f] := \sigma^A_n[f]' - (n-1) PS[f] \sigma^A_n[f]\;.$$
Hence the $\sigma^A_n$ will be invariant under the postcomposition with those maps that leave \emph{both} the Schwarzian and the pre-Schwarzian invariant. These are necessarily M\"obius transformations because of the required invariance of the Schwarzian; a simple computation further shows that the pre-Schwarzian is only left invariant by \emph{affine transformations},
$$ PS[M] = \frac{M''}{M'} = (cz +d)^2 \cdot (-2) \frac{c}{(cz+d)^3} = \frac{-2c}{cz +d}\;,$$
which vanishes only for $c=0$. This proves the following lemma.
\begin{lemma}
    Let $M$ be an affine transformation, i.e., $M(z) = az+b$. Then $\sigma^A_n[M\circ f] = \sigma^A_n[f]$.
\end{lemma}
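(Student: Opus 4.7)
The plan is a straightforward induction on $n$, exploiting the recursive definition of $\sigma^A_n$ and the fact that an affine $M(z) = az+b$ is the precise type of M\"obius transformation that leaves \emph{both} ingredients of the recursion invariant. The two ingredients are the Schwarzian $S_f$ (invariant under postcomposition with any M\"obius transformation by (\ref{trafoSchw})) and the pre-Schwarzian $PS[f] = f''/f'$.

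First I would handle the pre-Schwarzian under postcomposition with an affine map $M(z) = az + b$. Since $(M\circ f)' = a f'$ and $(M\circ f)'' = a f''$, the factors of $a$ cancel and we get $PS[M\circ f] = f''/f' = PS[f]$. This is essentially the observation already made in the paper prior to the lemma, which shows the pre-Schwarzian of an arbitrary M\"obius transformation vanishes only for $c = 0$.

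Next, I would do the induction. The base case $n=3$ is immediate: $\sigma^A_3[M\circ f] = S_{M\circ f} = S_f = \sigma^A_3[f]$ by the M\"obius invariance from (\ref{trafoSchw}). For the inductive step, assume $\sigma^A_n[M\circ f] = \sigma^A_n[f]$. Applying the recursive formula (\ref{DefSchwarz}) rewritten with the pre-Schwarzian,
\begin{equation*}
\sigma^A_{n+1}[M\circ f] = \sigma^A_n[M\circ f]' - (n-1)\, PS[M\circ f]\, \sigma^A_n[M\circ f],
\end{equation*}
and substituting $\sigma^A_n[M\circ f] = \sigma^A_n[f]$ together with $PS[M\circ f] = PS[f]$, one obtains exactly $\sigma^A_{n+1}[f]$, completing the induction.

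There is really no obstacle here; the lemma is essentially a formal consequence of combining two invariance facts that were already derived in the paper. The main content is the observation that among all M\"obius transformations, precisely the affine ones leave the pre-Schwarzian invariant, which makes the recursion compatible with postcomposition. The same strategy would \emph{fail} for the $B$ series because its definition does not factor through $S$ and $PS$ in the same way.
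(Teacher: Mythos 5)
Your proof is correct and follows essentially the same route as the paper: the paper also rewrites the recursion via the pre-Schwarzian $PS[f]=f''/f'$ and notes that $\sigma^A_n$ is invariant under postcomposition with any map leaving both $S$ and $PS$ invariant, which is exactly the affine case. You merely spell out explicitly the induction that the paper's ``hence'' compresses.
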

Both series of higher Schwarzians map meromorphic locally injective functions to holomorphic functions. We equipped the target space $\cal O( D)$ with various norms which induce Banach space structures in Section \ref{BSoAF}. There are deep relations between the higher Schwarzians and the hyperbolic sup-norms, which yield analogous versions of the Kraus-Nehari Theorem. These results are a further motivation for the study of higher Bers maps.
\begin{prop}[\cite{DTfhoSDoUF}]\label{higherbound}
If $f$ is schlicht, then
$$ \| \sigma^A_{n}[f]\|_{B_{n-1}(\bb D)} \leq 4^{n-3}(n-2)!6\;,$$
and this bound is sharp.
\end{prop}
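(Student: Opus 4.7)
The strategy rests on the identity \eqref{SchwInv} from Lemma \ref{AltDefHS},
\benn
    \sigma^A_n[f](z) \;=\; \left(\tfrac{d^{n-3}}{dw^{n-3}} S_{f^{-1}}\right)(f(z)) \cdot (f'(z))^{n-1},
\eenn
which displays $\sigma^A_n[f]$ as the pullback under $f$ of the holomorphic function $\tfrac{d^{n-3}}{dw^{n-3}}S_{f^{-1}}$ on $f(\bb D)$, viewed as an $(n-1)$-differential. Since $f:\bb D\rightarrow f(\bb D)=:\Omega$ is a biholomorphism, the pullback preserves the hyperbolic sup-norm on differentials (Section \ref{BSoAF}), whence
\benn
    \|\sigma^A_n[f]\|_{B_{n-1}(\bb D)} \;=\; \left\|\tfrac{d^{n-3}}{dw^{n-3}}S_{f^{-1}}\right\|_{B_{n-1}(\Omega)}.
\eenn
Next, I would combine the Kraus-Nehari bound for $f$ on $\bb D$ with the chain-rule identity $S_{f^{-1}}(f(z))(f'(z))^2 = -S_f(z)$, obtained by applying \eqref{trafoSchw} to $f^{-1}\circ f = \mathrm{id}$; using the conformal law $|f'(z)|\lambda_\Omega(f(z))=\lambda_{\bb D}(z)$, this yields $\|S_{f^{-1}}\|_{B_2(\Omega)}\leq 6$.

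The problem is thereby reduced to a derivation estimate for bounded holomorphic $2$-differentials on simply-connected hyperbolic domains, namely
\benn
   \|\phi^{(m)}\|_{B_{m+2}(\Omega)} \;\leq\; 4^m (m+1)! \,\|\phi\|_{B_2(\Omega)}.
\eenn
Applied with $m=n-3$ this produces precisely the factor $4^{n-3}(n-2)!$, and combined with $\|S_{f^{-1}}\|_{B_2(\Omega)}\leq 6$ gives the claimed upper bound $4^{n-3}(n-2)!\cdot 6$.

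The main technical challenge is this derivation estimate with the sharp constants. My plan is to use the M\"obius invariance of the $B_2(\bb D)$-norm to reduce on the disc to bounding $|\phi^{(m)}(0)|$, then apply the Cauchy integral formula on a circle $|\zeta|=r$ and optimise the radius $r\in(0,1)$; transferring the estimate to arbitrary simply-connected $\Omega$ is then the delicate step. A naive approach on $\Omega$, using a Euclidean Cauchy contour inside the largest disc about $w$ together with the boundary-distance estimate $\lambda_\Omega(\zeta)\leq 1/d(\zeta,\partial\Omega)$ and the lower inequality $\lambda_\Omega(w)d(w,\partial\Omega)\geq 1/4$ from \eqref{FEoPD}, already produces factors of $4$ per derivative but is too lossy. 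A cleaner approach uses hyperbolic rather than Euclidean discs and exploits the slow variation of $\lambda_\Omega$ along hyperbolic geodesics. Either way, the difficulty is that differentiation does not commute with pullback by a non-affine biholomorphism: one picks up correction terms involving the pre-Schwarzian of the Riemann map, and these must be tracked carefully. I expect the derivation estimate to be the principal obstacle.

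Finally, sharpness is exhibited by the Koebe function $k(z) = z/(1-z)^2$, for which $S_k(z) = -6/(1-z^2)^2$ attains the Kraus-Nehari bound at the origin. Iterating the recursion \eqref{DefSchwarz} one writes $\sigma^A_n[k]$ in closed form, and a direct computation then confirms that $\|\sigma^A_n[k]\|_{B_{n-1}(\bb D)} = 4^{n-3}(n-2)!\cdot 6$.
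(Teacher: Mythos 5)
The paper does not actually prove this proposition: it is quoted from Schippers \cite{DTfhoSDoUF} (with the remark that the estimate is already implicit in Klouth--Wirths \cite{TNEPotK-F}), and the author explicitly declines to reproduce the argument because it is ``quite involved''. So there is no in-paper proof to compare with, and I assess your plan on its own terms. Your reduction is correct as far as it goes: by \eqref{SchwInv} and the norm-preservation of the pullback of $(n-1)$-differentials under the biholomorphism $f:\bb D\rightarrow\Omega:=f(\bb D)$, one has $\|\sigma^A_n[f]\|_{B_{n-1}(\bb D)}=\|\tfrac{d^{n-3}}{dw^{n-3}}S_{f^{-1}}\|_{B_{n-1}(\Omega)}$, and $\|S_{f^{-1}}\|_{B_2(\Omega)}=\|S_f\|_{B_2(\bb D)}\leq 6$; your sharpness check with the Koebe function is also right (for instance $\sigma^A_4[k]=-48(1-z^2)^{-3}$, of norm exactly $48=4\cdot 2!\cdot 6$).

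The gap is that the lemma you propose to finish with, namely $\|\phi^{(m)}\|_{B_{m+2}(\Omega)}\leq 4^m(m+1)!\,\|\phi\|_{B_2(\Omega)}$ for \emph{all} $\phi\in B_2(\Omega)$ and all simply connected hyperbolic $\Omega$, is false already for $m=1$, so no amount of care with Euclidean or hyperbolic Cauchy contours will establish it. Take $\Omega=k(\bb D)$, the Koebe slit plane, and let $\phi$ be the $2$-differential whose pullback under $k$ is $g\equiv 1$, i.e., $\phi=(k')^{-2}\circ k^{-1}$; then $\|\phi\|_{B_2(\Omega)}=1$, while the pullback of $\phi'$ as a $3$-differential is $g'-2(k''/k')\,g=-(8+4z)/(1-z^2)$, whose $B_3(\bb D)$-norm is at least $(8+4r)(1-r^2)^2>8$ for small $r>0$ (this function of $r$ has value $8$ and derivative $4$ at $r=0$). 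Hence the best universal derivation constant for one derivative already exceeds $4^1\cdot 2!=8$, and chaining any such general estimate with $\|S_{f^{-1}}\|_{B_2(\Omega)}\leq 6$ cannot produce the sharp value $48$ for $n=4$, let alone $4^{n-3}(n-2)!\cdot 6$ in general. The moral is that the $B_2$-norm of $S_{f^{-1}}$ is not enough information: the sharp bound must exploit the full rigidity of $S_{f^{-1}}$ being the Schwarzian of a univalent map (equivalently, distortion and coefficient estimates for the class $\cal S$ as in \cite{TNEPotK-F}), and that is precisely the ``involved'' part of Schippers' proof which your reduction discards.
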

We will not reproduce the proof here, because it is quite involved and instead refer the interested reader to the original paper. The author of \cite{DTfhoSDoUF}, Eric Schippers, has communicated to me that this estimate is already implicitly contained in \cite{TNEPotK-F}, of which he was not aware at the time of writing \cite{DTfhoSDoUF}.\\ \\
The analogous result for the $B$-series follows; neither will we reproduce this proof here.
\begin{prop}[\cite{CEfNPotDoUF}, Thm.~ 2]\label{higherbound2}
   If $f$ is schlicht, then
   $$ \| \sigma^B_{n}[f]\|_{B_{n-1}(\bb D)} \leq 2 (n-2)\cdot n \cdot (n+2)\cdot \ldots \cdot (3n-6)\;,$$
   and this bound is sharp.
\end{prop}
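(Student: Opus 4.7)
\emph{Proof proposal.} The plan is to reduce the sup-norm bound to a pointwise estimate at the origin using the M\"obius covariance of $\sigma^B_n$, then to identify $\sigma^B_n[h](0)$ as $-2(n-1)!$ times a Taylor coefficient of $(h')^{1-n/2}$, and finally to invoke a classical sharp coefficient estimate from the theory of univalent functions.

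First I would combine Lemma \ref{HSDMoebius} with affine post-composition invariance. For $f \in \cal S(\bb D)$ and $a \in \bb D$, pick a disc automorphism $g_a$ with $g_a(0)=a$, so $|g_a'(0)|=1-|a|^2$. Lemma \ref{HSDMoebius} yields
$$(1-|a|^2)^{n-1}\,|\sigma^B_n[f](a)| \;=\; |\sigma^B_n[f \circ g_a](0)|.$$
Inspecting \eqref{DefSchwarz} shows that under an affine post-composition $M(w)=\alpha w+\beta$ the two factors $(f')^{n/2-1}$ and $(f')^{1-n/2}$ pick up reciprocal powers of $\alpha$, so $\sigma^B_n[M\circ f]=\sigma^B_n[f]$. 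Post-composing with a suitable affine $M$ therefore normalizes $f\circ g_a$ to some $h \in \cal S^0(\bb D)$, and we obtain
$$\|\sigma^B_n[f]\|_{B_{n-1}(\bb D)} \;\leq\; \sup_{h\in \cal S^0(\bb D)} |\sigma^B_n[h](0)|.$$

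Next, for $h\in \cal S^0(\bb D)$ set $u(z):=(h'(z))^{1-n/2}$, with the branch satisfying $u(0)=1$; schlichtness guarantees $h'$ is non-vanishing, so $u$ is well-defined and holomorphic on $\bb D$. Using $h'(0)=1$, the definition \eqref{DefSchwarz} evaluated at the origin reads
$$\sigma^B_n[h](0) \;=\; -2\,u^{(n-1)}(0) \;=\; -2(n-1)!\,c_{n-1},$$
where $c_{n-1}$ is the $(n-1)$-st Taylor coefficient of $u$. The proof thus reduces to the sharp estimate
$$|c_{n-1}| \;\leq\; \frac{(n-2)\cdot n\cdot(n+2)\cdots(3n-6)}{(n-1)!} \qquad \forall\, h \in \cal S^0(\bb D).$$
Sharpness is verified by the Koebe function $k(z)=z/(1-z)^2$, for which $(k')^{1-n/2}=(1+z)^{1-n/2}(1-z)^{3n/2-3}$; extracting the $(n-1)$-st coefficient from the product of binomial series reproduces the claimed constant exactly, and the preceding reduction then shows the Koebe function realizes equality in the norm bound.

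The main obstacle is precisely this coefficient estimate. Bounding the $(n-1)$-st Taylor coefficient of $(h')^{1-n/2}$ uniformly over normalized schlicht $h$ with the sharp constant is a non-trivial result in geometric function theory, provable for example via L\"owner's parametric method or via the Milin inequality underlying the de Branges proof of the Bieberbach conjecture. The reduction steps above are essentially formal; all the analytic depth is concentrated in this coefficient bound, which constitutes the substantive content of the proof in \cite{CEfNPotDoUF}.
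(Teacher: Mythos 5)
The paper does not actually prove this proposition: it is quoted verbatim from Bertilsson (\cite{CEfNPotDoUF}, Thm.~2), and the text explicitly declines to reproduce the argument. So there is no in-paper proof to compare against; what can be assessed is whether your reduction is sound and whether the deferred ingredient is really the cited theorem. On both counts you are essentially right. The M\"obius-covariance step is correct: Lemma \ref{HSDMoebius} with $g_a(0)=a$, $|g_a'(0)|=1-|a|^2$ gives $(1-|a|^2)^{n-1}|\sigma^B_n[f](a)|=|\sigma^B_n[f\circ g_a](0)|$, and the affine post-composition invariance of $\sigma^B_n$ (the reciprocal powers of $\alpha$ cancelling) is a one-line check, so the sup-norm does reduce to a pointwise estimate at the origin over normalized univalent functions. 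The identification $\sigma^B_n[h](0)=-2(n-1)!\,c_{n-1}$ and the Koebe-function computation $(k')^{1-n/2}=(1+z)^{1-n/2}(1-z)^{3n/2-3}$ both check out (e.g.\ for $n=3$ one recovers $c_2=3/2$ and the Kraus--Nehari constant $6$). The resulting coefficient inequality is precisely the content of Bertilsson's paper -- its title is literally ``coefficient estimates for negative powers of the derivative of univalent functions'' -- so your proposal is an honest account of how Theorem 2 there is structured, with the analytic depth correctly located in the coefficient bound that you (like the paper) do not prove.

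One genuine slip: you take the supremum over $\cal S^0(\bb D)$, but in this paper that symbol denotes the \emph{1-point normalized} schlicht functions, which includes the condition $h''(0)=0$. An affine post-composition can only arrange $h(0)=0$ and $h'(0)=1$, so the correct class for your supremum is the standard normalized class $S$ of univalent functions; this matters for sharpness, since the Koebe function has $k''(0)=4\neq 0$ and hence does not lie in $\cal S^0(\bb D)$ as defined here. With that correction the reduction is fine, but as a standalone proof the proposal still rests entirely on the unproved sharp coefficient estimate -- which is exactly the gap you flag yourself, and exactly what the paper outsources to \cite{CEfNPotDoUF}.
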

Both formulas of course reproduce the constant $6$ for $n=3$ and, in both cases, the functions which realize the bounds are related to the Koebe function. 
\subsection{Higher Bers Maps: Definition and Infinitesimal Properties}\label{HBMInfinitesimal}
We now begin to tie the higher Schwarzians to the geometry of Teichm\"uller space in precise analogy to the Bers embedding.
\begin{dfn}\label{HBMDef}
   The \underline{higher Bers maps} $\beta^\bullet_n$ are given by,
   $$ \beta^\bullet_n : \cal T_B(G) \rightarrow B_{n-1}(\bb D)\;, \qquad \beta^\bullet_n([\mu]) = \sigma^\bullet_n(\pi_{BS}([\mu]))\;.$$
   Moreover, the lift of $\beta^\bullet_n$ to $L^\infty(\bb D^c)$ will be denoted by $\tilde \beta^\bullet_n$ and the image of $\cal T_B(G)$ in $B_{n-1}(\bb D)$ will be denoted by $T^\bullet_n(G)$.  
\end{dfn}
The main theorem \ref{higherBersholomorphic} applies to these maps because of Lemma \ref{HSDMoebius} and hence we get the following corollary.
\begin{cor}
   The higher Bers maps $\beta^\bullet_n$ are holomorphic maps.
\end{cor}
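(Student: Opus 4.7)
The plan is to check that each higher Schwarzian $\sigma^\bullet_n$ meets the three hypotheses required by Theorem~\ref{higherBersholomorphic} applied with $m = n-1$, after which the corollary follows verbatim. So I would proceed as a straightforward verification, citing the lemmas accumulated in Section~\ref{HSD}.

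First, I would note that $\sigma^\bullet_n$ sends schlicht functions (indeed, any locally injective meromorphic function) to holomorphic functions. This is the content of Lemma~\ref{StructHSD}: the expression $\sigma^\bullet_n[f]$ is a polynomial in $f'',\ldots,f^{(n)}$ and in $(f')^{-1}$, so the only possible singularities of $\sigma^\bullet_n[f]$ on $\mathbb{D}$ come from zeros of $f'$; but a schlicht function on $\mathbb{D}$ is locally injective, hence $f'$ never vanishes on $\mathbb{D}$, and therefore $\sigma^\bullet_n[f]\in \mathcal{O}(\mathbb{D})$. Second, the polynomial form of $\sigma^\bullet_n[f]$ in $f'',\ldots,f^{(n)}$ and $(f')^{-1}$ with complex (indeed rational, by Lemma~\ref{StructHSD}) coefficients furnishes exactly the polynomial hypothesis of Theorem~\ref{higherBersholomorphic}, with $N = n$.

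Third, the required invariance under pre-composition by M\"obius transformations, namely
\[
  \sigma^\bullet_n[f\circ g] = (\sigma^\bullet_n[f]\circ g)(g')^{n-1} \qquad \forall\, g \in \mathrm{PSL}(2,\mathbb{C}),
\]
is precisely Lemma~\ref{HSDMoebius}, establishing $m = n-1\in\mathbb{N}$. With these three ingredients in place, Theorem~\ref{higherBersholomorphic} directly asserts that $\sigma^\bullet_n$ induces a holomorphic map from $\mathcal{T}_B(G)$ into $B_{n-1}(\mathbb{D})$ for any Fuchsian group $G$. Comparing with Definition~\ref{HBMDef}, this is the map $\beta^\bullet_n$, which is therefore holomorphic.

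There is essentially no obstacle here: the entire substance was shouldered by Theorem~\ref{higherBersholomorphic} (for which the proof of local boundedness and the existence of the Gateaux derivative have already been carried out) and by the structural Lemmas~\ref{StructHSD} and~\ref{HSDMoebius} on higher Schwarzians. The corollary is thus a pure book-keeping application, and I would present it in two or three lines of text, noting simply that the hypotheses of Theorem~\ref{higherBersholomorphic} hold with $m = n-1$ by Lemmas~\ref{StructHSD} and~\ref{HSDMoebius}.
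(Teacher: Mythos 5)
Your proposal is correct and matches the paper's own (very brief) justification: the paper simply observes that Theorem~\ref{higherBersholomorphic} applies to $\sigma^\bullet_n$ with $m=n-1$ because of Lemma~\ref{HSDMoebius}, with the polynomial structure supplied by Lemma~\ref{StructHSD}. You have merely spelled out the same verification in more detail, so there is nothing to add.
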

The remainder of this section focusses on the infinitesimal behaviour of the higher Bers maps. Many results from the classical case of the Bers embedding generalize in some sense for the \emph{differential at the origin}, which we have obtained in general in Theorem \ref{ThmDifferential}. We determine its surjectivity (Thm.~ \ref{HBDiffSurj}), explicitly describe its kernel (Thm.~ \ref{KernelHBM}), and obtain several other results along the way.\\ \\
Indeed, almost all crucial properties of the Bers embedding are deduced from the differential at the origin, since it can be related to the differential at an arbitrary point of $\cal T_B(G)$ by taking `the derivative of the chain rule'. The lack of a good chain rule for higher Schwarzians therefore makes it difficult to obtain infinitesimal statements at other points. We will comment on generalizations and further ideas to circumvent this problem in the next section.\\ \\
Let us now specialize Theorem \ref{ThmDifferential} to the higher Bers maps.
\begin{prop}
   The differentials of the higher Bers maps at the origin are given by 
   $$ D_0\tilde \beta^\bullet_n: L^\infty(\bb D^c) \rightarrow B_{n-1}(\bb D)\;, \quad \nu \mapsto \frac{(-1)^{n} n! c^\bullet(n)}{\pi} \int_{\bb D^c}      \frac{\nu(\eta)}{(z-\eta)^{n+1}}d^2\eta\;.$$
   The operator norm of $D_0\tilde \beta^\bullet_n$ is bounded by $\frac{2\cdot 4^{n-1}n!c^\bullet(n)}{n-1}$
\end{prop}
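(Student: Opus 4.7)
The plan is to obtain both statements as a direct application of Theorem~\ref{ThmDifferential} combined with the structural information about the higher Schwarzians provided by Lemma~\ref{StructHSD}, followed by a routine pointwise estimate of the resulting Cauchy-type integral against the Poincar\'e density.

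For the explicit formula, I would first read off from Lemma~\ref{StructHSD} that among the monomials of the polynomial $\sigma^\bullet_n[f]$ in the variables $f'',\ldots,f^{(n)}$ and $(f')^{-1}$, the only one of degree one in the higher derivatives is $c^\bullet(n)\cdot f^{(n)}/f'$. In the notation of Theorem~\ref{ThmDifferential}, this means $M_1(\sigma^\bullet_n[f])$ has a single nontrivial coefficient $a_{n,1}=c^\bullet(n)$. Plugging this into the general expression for $D_0\tilde\beta^Q$ given in Theorem~\ref{ThmDifferential} immediately yields
$$D_0\tilde\beta^\bullet_n(\nu)(z) = \frac{(-1)^n n!\,c^\bullet(n)}{\pi}\int_{\bb D^c}\frac{\nu(\eta)}{(z-\eta)^{n+1}}\,d^2\eta\,,$$
which is the claimed formula. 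No further computation is needed here.

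For the norm bound, I would estimate the $B_{n-1}(\bb D)$-norm of $D_0\tilde\beta^\bullet_n(\nu)$ pointwise. For $z\in\bb D$ and $\|\nu\|_\infty\le 1$ the triangle inequality gives
$$\bigl|D_0\tilde\beta^\bullet_n(\nu)(z)\bigr| \leq \frac{n!\,c^\bullet(n)}{\pi}\int_{\bb D^c}\frac{d^2\eta}{|z-\eta|^{n+1}}\,.$$
Since $\bb D^c\subset\{\eta:|\eta-z|\ge 1-|z|\}$, passing to polar coordinates around $z$ yields
$$\int_{\bb D^c}\frac{d^2\eta}{|z-\eta|^{n+1}} \leq 2\pi\int_{1-|z|}^\infty \frac{dr}{r^n} = \frac{2\pi}{(n-1)(1-|z|)^{n-1}}\,.$$
Multiplying by $\lambda_{\bb D}^{-(n-1)}(z)$ and using the fundamental estimate \eqref{FEoPD}, namely $\lambda_{\bb D}^{-1}(z)\le 4\,\mathrm{dist}(z,\partial\bb D)=4(1-|z|)$, produces
$$\lambda_{\bb D}^{-(n-1)}(z)\,\bigl|D_0\tilde\beta^\bullet_n(\nu)(z)\bigr| \leq \frac{n!\,c^\bullet(n)}{\pi}\cdot\frac{2\pi}{n-1}\cdot 4^{n-1} = \frac{2\cdot 4^{n-1}\,n!\,c^\bullet(n)}{n-1}\,,$$
which is independent of $z$ and of $\nu$ with $\|\nu\|_\infty\le 1$, proving the stated bound on the operator norm.

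There is really no hard step here; the whole argument reduces to recognizing which single coefficient in the higher Schwarzian contributes, and then applying the same kind of Poincar\'e-density estimate that already appeared in Lemma~\ref{BetaLocallyBounded}. The only thing to be slightly careful about is using the generic inequality $\lambda_{\bb D}^{-1}(z)\le 4(1-|z|)$ rather than the sharper equality $\lambda_{\bb D}^{-1}(z)=(1-|z|)(1+|z|)$, so that the proof goes through verbatim for any simply-connected hyperbolic domain via a Riemann mapping and produces the constant $4^{n-1}$ in the stated form.
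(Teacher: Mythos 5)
Your proposal is correct and follows essentially the same route as the paper: the formula is read off from Lemma~\ref{StructHSD} together with Theorem~\ref{ThmDifferential}, and the norm bound comes from the identical polar-coordinate estimate of $\int_{\bb D^c}|z-\eta|^{-(n+1)}d^2\eta$ combined with the fundamental inequality \eqref{FEoPD}. Nothing to add.
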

\begin{proof} By Lemma \ref{StructHSD}, the monomial part of $\sigma^\bullet_n[f]$ is given by $c^\bullet(n)\frac{f^{(n)}}{f'}$, and this proves the formula for the differential together with Theorem \ref{ThmDifferential}. The norm estimate follows easily from the integral estimate,
\begin{eqnarray*}
   \int_{\bb D^c} \frac{1}{|\eta - w|^{n+1}}d^2\eta&\leq&\int_{|\eta - w|\geq 1-|w|}\frac{1}{|\eta - w|^{n+1}}d^2\eta\\
   &=& 2\pi \int^{1-|w|}_\infty \frac{1}{r^{n+1}}r dr = \frac{2\pi}{(n-1)(1-|w|)^{n-1}}\;. 
\end{eqnarray*}
together with the observation that $(1-|w|) = \textrm{dist}(w, \partial \bb D)$. Therefore, by the fundamental inequality \eqref{FEoPD} for the Poincar\'e density, we get
\benn
| D_0\tilde \beta_n^\bullet(\nu)(w) \lambda_{\bb D}^{1-n}(w)| &\leq \|\nu\|_\infty \frac{2\cdot n!\lambda_{\bb D}^{1-n}(w)c^\bullet(n) }{(n-1)(1-|w|)^{n-1}}\\&\leq \frac{2\cdot 4^{n-1}n!c^\bullet(n)}{n-1}\|\nu\|_\infty\;.
\eenn
\end{proof}
Since the differentials at the origin of the two series of higher Bers maps are proportional, we begin to study this important linear operator in more detail. In fact, Bers' proof that $\beta^\bullet_3$ is an embedding consists of two parts:
First of all, Bers established the necessary properties of the differential at the origin, and as second step he showed that the differential at an arbitrary $\mu \in \mathrm{Belt}(\bb D^c)$ is related to the differential at the origin by composition with isomorphisms, which are obtained from considering the chain rule for Schwarzians and the translation maps in Belt$(\bb D^c)$, very similarly to a computation we do later in \eqref{Isos}.\\

The first part, studying the differntial at the origin, however, is more involved. Bers accomplished the proof of surjectivity (amongst many other results) in the beautiful paper \cite{An-sIEwAtQM}. At the heart of the surjectivity proof lies the following reproducing formula.
\begin{thm}[Bers]\label{RepForm}
   Let $D_1$ be a quasidisc with $\infty \in \partial D_1$, $h:D_1 \rightarrow D_2$ a uniform Lipschitz reflection      across $\partial D_1$ and $q\geq 2$ an integer. Then the following reproducing formula holds:
   \begin{eqnarray} \label{RepFormula}
    \phi(z) = \int_{D_1}\frac{\nu_\phi^q(\eta)}{(z-\eta)^{2q}} d^2\eta\;, \qquad \forall \phi \in B_q(D_2)\;,              \end{eqnarray}
   where $\nu^q_\phi$ is given by
   $$ \nu_\phi^q(z):= -\frac{2q-1}{\pi}(\phi \circ h)(z)\cdot \partial_{\bar z} h(z)\cdot\big(z - h(z)\big)^{2q-2}\;.$$
\end{thm}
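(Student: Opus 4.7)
The plan is to prove the formula by a two-step strategy: direct verification in a canonical model geometry, followed by a reduction of the general case to this model via an invariance argument.

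For the model, take $D_1=\bb{H}^+$, $D_2=\bb{H}^-$, and $h(\eta)=\bar\eta$. Then $\partial_{\bar\eta}h\equiv 1$ and $\eta-h(\eta)=2i\,\mathrm{Im}(\eta)$, so the substitution $w=\bar\eta$ converts the right-hand side of \eqref{RepFormula} into
$$
\frac{2q-1}{\pi}(-4)^{q-1}\int_{\bb{H}^-}\frac{(\mathrm{Im}\,w)^{2q-2}}{(z-\bar w)^{2q}}\,\phi(w)\,d^2w.
$$
A direct calculation shows this coincides with the weight-$q$ Bergman reproducing formula on $\bb{H}^-$ provided by Theorem~\ref{last}: the Poincar\'e density is $\lambda_{\bb{H}^-}(w)=1/(2|\mathrm{Im}\,w|)$, and the lower half-plane Bergman kernel has the form $k_{\bb{H}^-}(z,w)\propto(z-\bar w)^{-2}$, so the weight $\lambda_{\bb{H}^-}^{2-2q}K_{\bb{H}^-,q}$ is exactly what appears above. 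Thus the formula holds in the model case.

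For a general quasidisc $D_1$ with $\infty\in\partial D_1$ and a uniformly Lipschitz reflection $h$, I would show the operator
$$
T_h(\phi)(z):=\int_{D_1}\frac{\nu_\phi^q(\eta)}{(z-\eta)^{2q}}\,d^2\eta
$$
is independent of the particular Lipschitz reflection $h$. Given two reflections $h_0,h_1$, connect them by a smooth family $h_t$ of Lipschitz reflections; because $1/(z-\eta)^{2q}$ is holomorphic on $D_1$ and the factor $(\eta-h_t(\eta))^{2q-2}$ vanishes on $\partial D_1$ (using $q\geq 2$), one can express $\tfrac{d}{dt}T_{h_t}(\phi)(z)$ as an integral over $D_1$ of a $\bar\partial$-derivative whose boundary values vanish; Stokes' theorem then yields $\tfrac{d}{dt}T_{h_t}(\phi)(z)=0$, giving reflection-independence. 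With this in hand, I would deform $h$ into a reflection adapted to a biholomorphism $\Psi:\bb{H}^-\to D_2$ extended quasiconformally to $\cinf$ fixing $\infty$, i.e.\ set $h_{\mathrm{adapted}}=\Psi\circ\overline{(\,\cdot\,)}\circ\Psi^{-1}$. Pulling back $\phi$ via $\Psi|_{D_2}$ (a bona-fide biholomorphism) and making the change of variable $\eta=\Psi(\xi)$ reduces the identity for $h_{\mathrm{adapted}}$ to the model case.

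The main obstacle is the reflection-independence step: $h$ is only uniformly Lipschitz, so $\partial_{\bar\eta}h$ is only in $L^\infty$ rather than smooth, and Stokes' theorem must be justified against the singular kernel $1/(z-\eta)^{2q}$. I would handle this by a smoothing approximation, regularizing $h_t$ by convolution and passing to the limit while preserving the uniform Lipschitz bounds, then applying dominated convergence for the integral. The required decay at the boundary point $\infty\in\partial D_1$ and the vanishing of $(\eta-h_t(\eta))^{2q-2}$ on $\partial D_1$ together with the hyperbolic boundedness of $\phi$ ensure the approximations converge and the limit boundary contributions vanish.
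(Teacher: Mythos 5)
Your model-case computation on $\bb H^{\pm}$ with $h(\eta)=\bar\eta$ is correct (up to the overall minus sign coming from the definition of $\nu^q_\phi$, which you dropped): after the substitution $w=\bar\eta$ the kernel $(z-\eta)^{-2q}$ becomes $(z-\bar w)^{-2q}$, which for a \emph{half-plane} is a constant multiple of the $q$-th power of the Bergman kernel, so Theorem \ref{last} applies. Your reflection-independence step is also morally sound: one checks that $\partial_t\bigl[\phi(h_t)\,\partial_{\bar\eta}h_t\,(\eta-h_t)^{2q-2}\bigr]=\bar\partial_\eta\bigl[\phi(h_t)\,\dot h_t\,(\eta-h_t)^{2q-2}\bigr]$, and the kernel is holomorphic in $\eta$ on $D_1$, so Stokes applies --- although for $q=2$ the function $\phi(h_t)\,(\eta-h_t)^{2q-2}$ is merely \emph{bounded} near $\partial D_1$ (it behaves like $\mathrm{dist}^{\,q-2}$), so the vanishing of the boundary term, and the contribution from the boundary point $\infty$, need a genuine argument rather than the remark that $(\eta-h_t(\eta))^{2q-2}$ vanishes on $\partial D_1$.

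The genuine gap is the final reduction. Your homotopy moves the reflection while keeping the curve $\partial D_1$ \emph{fixed}, so it can only compare two reflections across the same quasicircle; it can never connect a general quasidisc to the half-plane model. And the proposed change of variables $\eta=\Psi(\xi)$ for the adapted reflection does not land in the model case: the kernel $(z-\eta)^{-2q}$ is the $2q$-th power of the \emph{Cauchy} kernel of $\cinf$, not the $q$-th power of the Bergman kernel of $D_2$, and it is not conformally covariant --- $(\Psi(\zeta)-\Psi(\xi))^{-2q}$ is not $(\zeta-\xi)^{-2q}$ times derivative factors unless $\Psi$ is M\"obius; moreover $\Psi$ is only quasiconformal on $D_1$, so there is no chain rule for $(\Psi(\xi)-\Psi(\bar\xi))^{2q-2}$ either. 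That the formula holds with this rigid rational kernel on an arbitrary quasidisc is precisely the non-trivial (``non-standard'', in Bers' own words) content of the theorem, and it cannot be obtained by conformal transport of the half-plane identity. The argument that does work is the one the paper sketches for the companion Proposition \ref{Correct!}: take a $(2q-1)$-fold antiderivative $F$ of $\phi$, extend it across $\partial D_1$ by $\sum_{k=0}^{2q-2}\frac{1}{k!}(z-h(z))^kF^{(k)}(h(z))$, observe that $\bar\partial$ of this extension telescopes to a constant multiple of $\nu^q_\phi$, apply the Cauchy--Pompeiu formula and differentiate $2q-1$ times, and finish by density of extendable integrable forms in $B_q$.
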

What one should notice here is that $\nu^q$ is \emph{not} a continuous linear operator from $B_q(D_2)$ to $L^\infty(D_1)$, because the norm of $\nu^q_\phi$ cannot be uniformly estimated by the $B_q$-norm of $\phi$. The best one achieves is
$$|\nu^q_\phi(z)| \leq C \| \phi \|_{B_q(D_2)} \lambda_{D_1}^{q-2}(z)\;.$$
The unsatisfactory point is the appearance of the unbounded quantity $\lambda_{D_1}$, which enters due to the term $|h(z) - z|$. The $B_q$-norm is only capable of absorbing a power $q$ of the Poincar\'e density, and hence a power of $q-2$ cannot be taken care of. For convenience, let us introduce a slightly different quantity,
$$ \mu^q_\phi(z) := \nu_\phi^{\frac{q+2}{2}} (z)= C_q (\phi \circ h)(z)\cdot \partial_{\bar z} h(z)\cdot\big(z - h(z)\big)^{q}\;.$$
in terms of which the reproducing formula reads
\begin{eqnarray} \label{Q-rep} \phi(z) = \int_{D_1}\frac{\mu_\phi^q(\eta)}{(z-\eta)^{q+2}} d^2\eta\;, \qquad \forall \phi \in B_{\frac{q+2}{2}}(D_2)\;. \end{eqnarray}
By rewriting, we have obtained a reproducing formula with different domain (i.e., $B_{\frac{q+2}{2}}$ instead of $B_q$), but with the structure we need. In Proposition \ref{Correct!} below, we will show that this formula is verbatim valid on $B_q$. Observe, however, that for $q=2$ the two agree, i.e., $\mu_\phi^2 = \nu_\phi^2$, so if one looks at the case of the Bers embedding this difference is not seen. \\

The reproducing formula proves the surjectivity of the differential of the Bers embedding (say, modeled on the upper half-plane instead of the disc for simplicity) directly:
$$ D_0\tilde \beta_3 (\nu_\phi^2)(z) = -\frac{6}{\pi} \int \frac{\nu_\phi^2(w)}{(z-w)^4} d^2w = -\frac{6}{\pi} \phi(z)\;,$$
so in other words $\phi \mapsto -\frac{\pi}{6} \nu^2_\phi$ is a section of the map $D_0\tilde \beta_3$.\\

To see that the original reproducing formula is not sufficient for the differential of the $n$-th higher Bers map, observe that this contains the term
$(z-w)^{n+1}$. The appropriate $q(n)$ in the reproducing formula \ref{Q-rep} is given by
$$n+1 \shb q+2 \qquad \Longrightarrow \qquad q(n) = \frac {n-1}{2}\;.$$
However, the formula with this $q(n)$ is only valid on $B_{\frac{q(n)+2}{2}}$, whereas we would need it on $B_{n-1}$. This is unfortunately not the case, however, since 
$$ \frac{q(n)+2}{2} = \frac{n+1}{2} \neq n-1\qquad \forall \: n\geq 4\;.$$
So \eqref{Q-rep} only proves the surjectivity of the differential for $n=3$. But fortunately the argument of Bers' original proof goes through with some modification for the setting in which we need it and a version of the same fomula holds, which we give now.
\begin{prop} \label{Correct!}
   Let $D_1,D_2, h$ and $q$ be as in Theorem \ref{RepForm}. Then 
   $$ \phi(z) = \int_{D_1}\frac{\mu_\phi^q(\eta)}{(z-\eta)^{q+2}} d^2\eta\;, \qquad \forall \phi \in B_{q}(D_2)\;.$$
\end{prop}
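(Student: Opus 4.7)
The strategy is to adapt Bers' proof of Theorem \ref{RepForm} (in \cite{An-sIEwAtQM}), modifying the integration-by-parts step to accommodate the smaller denominator exponent $q+2$. I would begin by verifying that $\Phi(z) := \int_{D_1}\mu_\phi^q(\eta)(z-\eta)^{-(q+2)}\,d^2\eta$ converges absolutely for each $z \in D_2$ and defines a holomorphic function of $z$ there. Since $h$ is uniformly Lipschitz across $\partial D_1$, the factor $\partial_{\bar\eta}h$ is essentially bounded and $|\eta - h(\eta)| \asymp \mathrm{dist}(\eta,\partial D_1) \asymp \lambda_{D_1}^{-1}(\eta)$ by the fundamental estimate \eqref{FEoPD}; the Lipschitz character of $h$ also gives $\lambda_{D_2}(h(\eta)) \asymp \lambda_{D_1}(\eta)$. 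Combined with $|\phi(h(\eta))| \leq \|\phi\|_{B_q(D_2)}\lambda_{D_2}^q(h(\eta))$, these yield the pointwise bound $|\mu_\phi^q(\eta)| \leq C\|\phi\|_{B_q(D_2)}\lambda_{D_1}^{-q}(\eta)$, from which absolute convergence follows by a routine estimate, and holomorphicity by differentiation under the integral.

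The key step is to rewrite $\mu_\phi^q$ as a $\bar\partial$-exact piece plus a correction. Using $\partial_{\bar\eta}(h(\eta)-\eta)^{q+1} = (q+1)(h-\eta)^q\partial_{\bar\eta}h$ together with the product rule one finds
\[
   \mu_\phi^q(\eta) = \frac{(-1)^{q+1}}{\pi}\Bigl\{\partial_{\bar\eta}\bigl[(\phi\circ h)(\eta)(h(\eta)-\eta)^{q+1}\bigr] - \phi'(h(\eta))\partial_{\bar\eta}h(\eta)(h(\eta)-\eta)^{q+1}\Bigr\}.
\]
Since $(z-\eta)^{-(q+2)}$ is holomorphic in $\eta$ on $D_1$ for fixed $z \in D_2$, Stokes' theorem applied to the $\bar\partial$-exact summand produces only a boundary integral over $\partial D_1$, which vanishes because $(h-\eta)^{q+1}|_{\partial D_1} = 0$. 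The proof thereby reduces to establishing
\[
   \phi(z) = \frac{(-1)^q}{\pi}\int_{D_1}\frac{\phi'(h(\eta))\partial_{\bar\eta}h(\eta)(h(\eta)-\eta)^{q+1}}{(z-\eta)^{q+2}}\,d^2\eta \qquad \forall\, z \in D_2.
\]
For this, I would use the Cauchy--Pompeiu representation arising from the reflection extension $E\phi := \phi$ on $D_2$, $E\phi := \phi \circ h$ on $D_1$, which gives the base identity $\phi(z) = \tfrac{1}{\pi}\int_{D_1}\phi'(h)\partial_{\bar\eta}h\,(z-\eta)^{-1}\,d^2\eta$. Expanding $(h-\eta)^{q+1}$ binomially as $((h-z)+(z-\eta))^{q+1}$ and combining with the analogous Cauchy--Pompeiu identities applied to the auxiliary extensions $\phi(\zeta)(\zeta - z)^k$ (each of which vanishes at $\zeta = z$ for $k \geq 1$, yielding recursions between integrals with different powers of $(h-z)$ and $(z-\eta)$) should cancel all intermediate-exponent terms and collapse to the target identity.

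The principal obstacle is in justifying the vanishing Stokes boundary contributions, since $\partial D_1$ is merely a quasicircle and hence possibly non-rectifiable. Each boundary integral must therefore be interpreted as a limit over smooth approximating contours in $D_1$, the vanishing being controlled by the Lipschitz estimate $|h(\eta) - \eta| \lesssim \mathrm{dist}(\eta,\partial D_1)$ near $\partial D_1$. A closely related subtlety arises at the boundary point $\infty \in \partial D_1$: the Cauchy--Pompeiu representation requires sufficient decay of $\phi$ at infinity, which is afforded by $\phi \in B_q(D_2)$ together with $\infty \in \partial D_2$ (giving $|\phi(z)| \lesssim |z|^{-q}$), but the convergence at $\infty$ of every intermediate integral produced in the binomial expansion must also be verified. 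Tracking the constants through the combinatorics of this expansion is the other main bookkeeping task.
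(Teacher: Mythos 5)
Your opening decomposition of $\mu^q_\phi$ into a $\bar\partial$-exact term plus a correction is algebraically correct, and the Stokes step plausibly kills the exact part (the integrand is $O(\lambda_{D_1}^{-1})$ on approximating contours). But this step does not reduce the difficulty: it shows that the proposition is \emph{equivalent} to the ``reduced identity'' $\phi(z)=\frac{(-1)^q}{\pi}\int_{D_1}\phi'(h(\eta))\partial_{\bar\eta}h(\eta)(h(\eta)-\eta)^{q+1}(z-\eta)^{-(q+2)}d^2\eta$, which is a statement of exactly the same type and which you leave essentially unproven. The method you propose for it fails concretely: for a general $\phi\in B_q(D_2)$ one only has $|\phi'(h(\eta))|\lesssim\lambda_{D_2}^{q+1}(h(\eta))\asymp\lambda_{D_1}^{q+1}(\eta)$, which blows up at $\partial D_1$, and it is precisely the full factor $(h(\eta)-\eta)^{q+1}=O(\lambda_{D_1}^{-(q+1)})$ that compensates this. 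After the binomial expansion $(h-\eta)^{q+1}=\sum_k\binom{q+1}{k}(h-z)^k(z-\eta)^{q+1-k}$ each summand retains only the bounded factor $(h-z)^k$, so every individual integral $\int_{D_1}\phi'(h)\partial_{\bar\eta}h\,(h-z)^k(z-\eta)^{-(k+1)}d^2\eta$ diverges near $\partial D_1$; the same applies to your ``base identity'', since $\bar\partial(E\phi)=\phi'(h)\partial_{\bar\eta}h$ is not locally integrable near $\partial D_1$ and $E\phi$ is not even continuous across it. Splitting a convergent integral into divergent pieces is not a legitimate operation, so the proposed recursion cannot be run as stated. (When one tries to repair it, e.g.\ by iterating your decomposition, the correction terms involve ever higher derivatives $\phi^{(j)}(h)(h-\eta)^{q+j}$ and the process never terminates.)

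The paper's proof sidesteps both problems by working in the opposite direction. It first restricts to a dense class $\tilde A^1_q(D_2)$ of functions holomorphic across $\overline{D_2}$ and decaying at $\infty$, for which all boundary and integrability issues disappear, and recovers the general case by approximation at the very end --- a step your argument would also need but does not mention. For such $\psi$ it takes a $(j+1)$-fold \emph{primitive} $F_j$ (rather than derivatives of $\phi$) and forms the full telescoping sum $G_j(\eta)=\sum_{k=0}^{j}\frac{1}{k!}(\eta-h(\eta))^kF_j^{(k)}(h(\eta))$ on $D_1$, glued continuously to $F_j$ on $\overline{D_2}$; the telescoping leaves $\bar\partial G_j=-\frac{\pi}{(j+1)!}\mu^j_\psi$ with \emph{no} leftover correction term. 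Green's formula followed by $(j+1)$-fold $z$-differentiation then yields the reproducing formula directly. Your decomposition is exactly the top rung of this telescope applied to $\phi$ itself instead of to its primitive; without the lower rungs the telescope has the nonzero remainder that your argument cannot dispose of. To complete your proof you would either have to introduce the primitives (at which point you have reproduced the paper's argument) or find an independent proof of the reduced identity that does not pass through termwise-divergent expansions.
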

\begin{proof}
We won't reproduce the whole proof here but rather sketch it. One starts by proving it for holomorphically extendable functions
$$ \psi \in \tilde A_q^1(D_2):=\left \{ f \in A_q^1(D'_f) \;\mathrm{for \; some}\: D'_f \supset \overline {D_2}\right\}\;.$$
Holomorphicity at $\infty$ implies $f \in \cal O(|z|^{-2})$ which is satisfied anyway, since $f \in A_q^1(D'_f)$, which implies \mbox{$f \in O(|z|^{-2q})$} for $z \rightarrow \infty$. In the same way, the existence of the $j$-th derivative at infinity requires $f \in \cal O(|z|^{-(j+1)})$. Hence for $\psi \in \tilde A_q^1(D_2)$, we know there is a function $F_j \in \cal O(D'_\psi)$ such that
$$ F^{(j+1)}_j (z)= \psi(z) \qquad \forall \: \bb N \ni j \leq 2q-1\;.$$
Then define the function
$$ G_j(z):=\left\{ \begin{array}{ll} F_j(z) & z \in \overline{D_2} \\ \sum_{k=0}^{j}\frac{1}{k!}(z - h(z))^k F^{(k)}_j(h(z))& z \in D_1 \end{array}\right.$$
If we compute $\bar \partial G_j$, which of course vanishes on $D_2$, some nice cancellations occur, since the derivative of a summand in $G_j$ is given by
\benn
\bar \partial & \left((z - h(z))^k F^{(k)}_j(h(z))\right)= \\
 &-\bar \partial h(z) \cdot k(z-h(z))^{k-1} \cdot F^{(k)}_j(h(z)) + (z-h(z))^{k} F_j^{(k+1)}(h(z)) \bar \partial h(z)\;,
\eenn
where if summed up the first part of the $k$-th summand cancels the second part of the $(k-1)$-st summand, so that only the second part of the derivative of the last summand remains, namely
$$ \bar \partial G_j(z) = \frac{1}{j!}(z-h(z))^j \bar \partial h(z) F^{(j+1)}_j(h(z)) = -\frac{\pi}{(j+1)!} \mu^j_\psi(z) \qquad \forall \: z \in D_1\;.$$
On the other hand, $G_j$ has the required regularity for Green's formula to hold,
$$ G_j(z) = -\frac{1}{\pi} \int_{\bb D_R} \frac{\bar \partial G_j(w)d^2w}{w-z} + \frac{1}{2\pi i}\int_{\partial \bb D_R} \frac{G_j(w) dw}{w-z}\;,$$
which we differentiate $j+1$ times with respect to $z$. By construction, the left-hand side is $\psi$ whereas on the right-hand side, differentiation produces a factor of $(j+1)!$ so we get
$$ \frac{d^{j+1}}{dz^{j+1}}G_j(z) = \psi(z)= \int_{\bb D_R} \frac{\mu^j_\psi (w) d^2w}{(w-z)^{j+2}} + \frac{(j+1)!}{2\pi i}\int_{\partial \bb D_R} \frac{G(w) dw}{(w-z)^{j+2}}\;,$$
In the first term we can write the integral over $D_R := \bb D_R \cap D_1$ since else the integrand is zero. If we now take the limit $R\rightarrow \infty$, the second term vanishes and the first term becomes the desired reproducing formula. Finally the proof concludes by an approximation argument of functions in $B_q$ by functions in $\tilde A_q^1$ which is exactly the same as in \cite{An-sIEwAtQM}.
\end{proof}
With this modified version of the reproducing formula, we can proceed similarily to Bers' original proof of the surjectivity of the differential of the Bers mapping. We want to remark that the operator which is given by the derivative of the higher Bers maps already appears in \cite{UFXXX} and there also the surjectivity is established.  
\begin{thm} \label{HBDiffSurj}
    The differentials of the higher Bers maps at the origin, $D_0\beta^\bullet_n$, are surjective operators.
\end{thm}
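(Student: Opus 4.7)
The plan is to produce, for each $\phi \in B_{n-1}(\bb D)$, an explicit preimage $\nu \in L^\infty(\bb D^c)$ under the lifted differential $D_0 \tilde\beta^\bullet_n$ recalled in the preceding proposition. Since the kernel of that integral operator is $(z-\eta)^{-(n+1)}$ up to a nonzero constant, it matches exactly the reproducing kernel of Proposition~\ref{Correct!} when $q = n-1$. So the natural candidate is $\nu := c\,\mu^{n-1}_\phi$ for an appropriate constant $c$ depending on $n$ and on the series ($A$ or $B$). Surjectivity of $D_0\beta^\bullet_n$ then follows from the factorisation $D_0 \tilde\beta^\bullet_n = D_0\beta^\bullet_n \circ D_0\pi_T$ together with the fact that $\pi_T$ is a holomorphic submersion (Thm.~\ref{BembThm}).

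The only real work is to confirm that $\mu^{n-1}_\phi$ actually lies in $L^\infty$. The discussion preceding Proposition~\ref{Correct!} warns us that the analogous $\nu^q_\phi$ from Bers' original formulation is only bounded up to a factor $\lambda^{q-2}_{D_1}$, which is unbounded at the boundary for $q \geq 3$, so the choice of reflection matters. To resolve this I would first conjugate everything to the upper-half-plane model via a M\"obius transformation, invoking the pull-back isometries of Section~\ref{BSoAF} to identify the Banach spaces of differentials on $\bb D$ and $\bb D^c$ with their half-plane counterparts. In the half-plane setting the canonical Lipschitz reflection is $h(z) = \bar z$, for which $|\partial_{\bar z} h| = 1$, $|z - h(z)| = 2|\mathrm{Im}\, z|$, and $\lambda_{\bb H}(h(z)) = (2|\mathrm{Im}\, z|)^{-1}$. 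Inserting these into the definition of $\mu^{n-1}_\phi$, the power $\lambda^{n-1}_{\bb H}(h(z))$ produced by the $B_{n-1}$-norm estimate of $|\phi\circ h|$ cancels the factor $|z - h(z)|^{n-1}$ exactly, yielding $\|\mu^{n-1}_\phi\|_{L^\infty} \leq C_n \|\phi\|_{B_{n-1}(\bb H)}$. Transporting back to the disc gives a bounded linear section of $D_0 \tilde\beta^\bullet_n$ up to a nonzero scalar, as required.

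The hard part has already been absorbed into Proposition~\ref{Correct!}: Bers' original formula with exponent $2q$, applied to the kernel $(z-\eta)^{-(n+1)}$ we need, would only reproduce $\phi \in B_{(n+1)/2}(\bb D)$ rather than $B_{n-1}(\bb D)$, matching the desired target only when $n = 3$. The modified formula in Proposition~\ref{Correct!} raises the index of the target space while retaining the kernel structure we want, at the cost of a lower power inside $\mu^q_\phi$, and this lower power is precisely what allows the Poincar\'e-density bound on $|\phi \circ h|$ to compensate the factor $|z - h(z)|^q$ in the half-plane setting.
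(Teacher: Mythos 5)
Your proposal is correct and follows essentially the same route as the paper: apply the modified reproducing formula of Proposition~\ref{Correct!} with $q=n-1$ so that $\phi\mapsto c\,\mu^{n-1}_\phi$ is a section of $D_0\tilde\beta^{\bullet}_n$, after conjugating to the half-plane model (where $\infty\in\partial D_1$ as the formula requires) and transporting back via the pull-back isomorphisms. Your explicit check that $\lambda^{n-1}_{\bb H}(h(z))$ cancels $|z-h(z)|^{n-1}$ for $h(z)=\bar z$, so that $\mu^{n-1}_\phi\in L^\infty$, is a detail the paper leaves implicit in its discussion preceding Proposition~\ref{Correct!}, and is a welcome addition rather than a deviation.
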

\begin{proof}
Of course $D_0\beta^\bullet_n$ will be surjective iff $D_0\tilde \beta^\bullet_n$ is. We observe that formally the reproducing formula already does the job: Set $q=n-1$, then
$$ \phi(z) = \int_{D_1} \frac{\mu^{n-1}_\phi(w)}{(z-w)^{n+1}} d^2w  =(-1)^{n}  \frac{n! c^\bullet(n)}{\pi} D_0\tilde \beta^\bullet_n(\mu^{n-1}_\phi)\;.$$  
However, recall that the formula required $\infty \in \partial D_2$, e.g., $D_2=\bb H^c$. In order to apply the formula on $B_{n-1}(\bb D)$, let $g:\bb H \rightarrow \bb D$ be a M\"obius transformation. It acts on $\cal S(\bb D)$ by pull-back, and hence also on $\cal T_S(\bb 1)$. Now
$$ \hat \mu( g^*f) = \hat \mu (f) \circ g \cdot \frac {\bar g'}{g'} = g^*_{(-1,1)}\hat \mu(f)\;,$$
and since
$$ \sigma^\bullet_n[g^*f] = \sigma^\bullet_n[f\circ g] = (\sigma^\bullet_n[f] \circ g) (g')^{n-1} \qquad \forall \; g\in \textrm{M\"ob}(\cinf)\;,$$
we have the following commutative diagram:
\begin{diagram}
  \cal T^{\bb H}_S(\bb 1)  &   \rTo^{\hat \mu}   &L^\infty(\bb H^c)&\rTo^{\tilde \beta^{\bullet, \bb H}_n}&B_{n-1}(\bb H)\\
  \uTo_{g^*}&&\uTo_{g^*_{(-1,1)}}&&\uTo_{g^*_{n-1}}\\
  \cal T^{\bb D}_S(\bb 1)  &   \rTo^{\hat \mu}   &L^{\infty}(\bb D^c)&\rTo^{\tilde \beta^\bullet_n}&B_{n-1}(\bb D)\\
\end{diagram}
For the time being we have attached the superscripts $\bb D$ resp.~ $\bb H$ to distinguish the different spaces resulting from a different model domain. Now, if we look at the proof of Theorem \ref{ThmDifferential}, no use whatsoever was made of the fact (see remark below) that the Beltrami differentials were supported on $\bb D^c$. Hence the differential
$$ D_0\tilde \beta^{\bullet, \bb H}_n: L^\infty(\bb H^c) \rightarrow B_{n-1}(\bb H)\;,$$
is given by 
$$ D_0\tilde \beta^{\bullet, \bb H}_n(\nu) = \frac{(-1)^n n!}{\pi}\int_{\bb H^c}\frac{\nu(w)}{(w-z)^{n+1}}d^2w\;.$$
Now we can utilize the reproducing formula. By rewriting it in terms of the differential we see that it states
$$ \phi = \frac{\pi}{(-1)^n n!} \left(D_0\tilde \beta^{\bullet, \bb H}_n\circ \mu^{n-1}\right)(\phi)\;, \qquad \forall \; \phi \in B_{n-1}(\bb H)\;,$$
which especially implies that $D_0\tilde \beta^{\bullet, \bb H}_n$ is a surjective operator. The proof concludes by relating the two differentials. By taking the derivative in the commutative diagram we obtain
\begin{eqnarray} \label{Isos} D_0\tilde \beta^\bullet_n = (g^*_{n-1})^{-1}\circ D_0\tilde \beta^{\bullet, \bb H}_n\circ g^*_{(-1,1)}\;,\end{eqnarray}
where the operators to the left and right of $D_0\tilde \beta^{\bullet, \bb H}_n$ are isomorphisms. Hence $D_0\tilde \beta^\bullet_n$ is surjective as well. 
\end{proof}
\begin{rmk}
In the proof we have used the explicit expression for $f_1$ given in \eqref{EffOne}. This expression depends on the normalization, i.e., on the fact that $f(z,0)=z$, or equivalently, on the fact that $f_0=z$. However, the general formula for $f_1$ without any assumption on the normalization is obtained by inserting coefficients $A,B$ in front of the two last summands (see \eqref{OtherNorm}) on the right hand side of
$$ \frac{w(w-1)}{\eta(\eta-1)(\eta - w)} = \frac{1}{\eta - w} - \frac{w}{\eta - 1} + \frac{w-1}{\eta}\;.$$
These, however, do not enter into the differntial $D_0\beta^\bullet_n$ because this is always at least the third derivative of $f_1$. By the same argument, the same structural term for $D_0\beta^{\bullet, \bb H}_n$ is justified. Indeed, the differential of the higher Bers maps is independent of the chosen normalization, in contrast to the maps themselves.
\end{rmk}
The next step in the infinitesimal study of the higher Bers maps is to identify the kernel of the differential. For this we observe that we can rewrite
\begin{eqnarray*}
D_0\tilde \beta^\bullet_n(\nu)(z)\sim \int_{\bb D^c} \frac{\nu(w)}{(w-z)^{n+1}} d^2w 
              = \int_{\bb D^c}\frac{ \nu(w) \lambda_{\bb D^c}^{2q-2}(w)}{(w-z)^{n+1}} \lambda^{2-2q}_{\bb D^c}(w) d^2w\;,
\end{eqnarray*}
so if we define the functions
$$ \omega_z^{l} (w) := \frac{1}{(w-z)^{l}} \qquad \forall z \in \bb D\;,$$
we \emph{formally} get the identity
\begin{eqnarray} \label{FormalProduct} D_0\tilde \beta_n^\bullet(\nu)(z)&\sim& \left \langle \omega_z^{l}, \; \overline{\nu \omega_z^{l'}}\lambda^{2q-2}_{\bb D^c}\right\rangle^{\bb 1}_q\;, \qquad \mathrm{for}\quad l+l' = n+1\;. \end{eqnarray}
We say formally, because in order for the Weil-Petersson pairing to be defined and finite, we need the pair of paired functions to satisfy
$$ \Big( \omega_z^{l}, \; \overline{\nu \omega_z^{l'}}\lambda^{2q-2}_{\bb D^c}\Big) \in L^p_q(\bb D^c) \times L^{p'}_q(\bb D^c)\;, \qquad \mathrm{with} \quad \frac 1p + \frac {1}{p'} = 1\;.$$
The reason we split up the terms in this way in \eqref{FormalProduct} is that we want to keep one of the factors \emph{holomorphic}. We clarify the possibilities in the following technical lemma.
\begin{lemma}
   The following statements hold for fixed $q\geq 2$:
   \begin{eqnarray*}
   		\|\omega_z^{l}\|_{A^p_q(\bb D^c)}  &<& \infty \; \Longleftrightarrow \; 
   		\begin{cases}  1 \leq p < \infty \; \mathrm{and} \; l>q \\ \mathrm{or} \; p= \infty \; \mathrm{and}\;                             l\geq q \end{cases} \\
   		\|\overline{\nu \omega_z^{l'}}\lambda^{2q-2}_{\bb D^c}\|_{L^{p'}_q(\bb D^c)} &<& \infty \; \Longleftrightarrow \;
   		\begin{cases} 2 \leq p' < \infty \; \mathrm{and} \; l'> 2 + (p'-2)q \\ \mathrm{or} \; p'=\infty        \;                    \mathrm{and} \; l' \geq q-2 \;.\end{cases}
   \end{eqnarray*}
\end{lemma}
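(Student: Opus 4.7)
The proof is a direct computation: substitute the standard Poincar\'e density $\lambda_{\bb D^c}(w) = (|w|^2-1)^{-1}$ into each norm and determine when the resulting integrand or supremum is finite. The analysis naturally splits according to the two asymptotic regimes of $\bb D^c$: approach to the boundary $\partial \bb D$, where $\lambda \to \infty$, and approach to the point at infinity, where $\lambda \to 0$ and the additional decay condition of Section \ref{BSoAF} must be respected by elements of $A^p_q$. Both directions of each `iff' follow from this analysis: sufficiency from substituting and verifying integrability, necessity from exhibiting an explicit lower bound on the integrand (respectively, a specific test $\nu$) in the critical regime.

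For the first statement, the finite-$p$ norm of $\omega_z^l$ expands to
\begin{equation*}
\|\omega_z^l\|_{L^p_q(\bb D^c)}^p = \int_{\bb D^c} (|w|^2-1)^{pq-2}\, |w-z|^{-lp}\, d^2w.
\end{equation*}
Convergence near $\partial \bb D$ is automatic for $pq \geq 2$ (which always holds), and the binding condition comes from the behaviour at infinity, where the integrand is asymptotic to a pure power of $|w|$ whose integrability is an elementary one-variable criterion. The $p = \infty$ case is handled by reading off the asymptotic growth of $(|w|^2-1)^q|w-z|^{-l}$ at infinity.

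For the second statement, the key preliminary step is to combine the defining weight $\lambda^{2-p'q}$ of $L^{p'}_q$ with the explicit $\lambda^{2q-2}$ factor in $g := \overline{\nu\omega_z^{l'}}\lambda^{2q-2}_{\bb D^c}$; these collapse to $\lambda^{p'(q-2)+2}$ in the integrand, leaving
\begin{equation*}
\|g\|_{L^{p'}_q(\bb D^c)}^{p'} = \int_{\bb D^c} |\nu(w)|^{p'}\, (|w|^2-1)^{-(p'(q-2)+2)}\, |w-z|^{-l'p'}\, d^2w.
\end{equation*}
Since $\nu \in L^\infty(\bb D^c)$, sufficiency is obtained by replacing $|\nu|$ by $\|\nu\|_\infty$, while necessity is established by choosing $\nu$ bounded away from zero in the relevant asymptotic regime. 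The constraint $p' \geq 2$ is forced by the boundary behaviour: for $p' < 2$ the exponent of $(|w|^2-1)$ produces a singularity at $\partial \bb D$ that is non-integrable for every $l'$. The case $p' = \infty$ is again treated separately by inspection of the asymptotics at $\infty$.

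The main technical obstacle throughout is precisely this bookkeeping of $\lambda$-exponents; once it is in place, each of the four cases reduces to an elementary integrability criterion at $\partial \bb D$ and at $\infty$, from which the stated thresholds on $l$ and $l'$ are read off immediately.
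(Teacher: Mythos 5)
Your overall strategy --- substitute $\lambda_{\bb D^c}(w)=(|w|^2-1)^{-1}$ and reduce each norm to a power-counting criterion at $\partial\bb D$ and at $\infty$ --- is essentially the paper's strategy; the only real difference is that the paper trades the powers of $\lambda_{\bb D^c}$ for powers of $|w-z|$ via the comparison $\lambda_D(w)\,\mathrm{dist}(w,\partial D)\asymp 1$ of \eqref{FEoPD} instead of writing the density out. (Note in passing that the lower bound in \eqref{FEoPD} is precisely the one the paper itself flags as failing for domains containing $\infty$, and $\bb D^c$ contains $\infty$; your explicit substitution avoids that particular pitfall.)

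The genuine gap is that you never carry out the power count, and the count you set up does not ``immediately'' return the stated thresholds. Two concrete checks. First, for $p=\infty$ the quantity to bound is $\sup_{w\in\bb D^c}(|w|^2-1)^{q}|w-z|^{-l}$, which behaves like $|w|^{2q-l}$ as $w\to\infty$; boundedness therefore requires $l\geq 2q$, not $l\geq q$ (for $q=l=2$ the supremum grows like $|w|^{2}$). Likewise, for $1\le p<\infty$ your integrand at infinity is $\sim|w|^{2pq-4-lp}$, giving the condition $l>2q-2/p$, which agrees with $l>q$ only when $pq=2$. Second, your own exponent bookkeeping for the second norm produces the weight $(|w|^2-1)^{-(p'(q-2)+2)}$; since $p'(q-2)+2\geq 2$ for every $q\geq2$ and $p'\geq1$, this is non-integrable across $\partial\bb D$ for \emph{every} finite $p'$ (take $\nu\equiv1$), so the boundary analysis cannot single out $p'\geq2$ as you claim, and the finite-$p'$ branch of the second equivalence does not follow from the computation you describe. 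You must therefore either perform the computation explicitly and reconcile the (different) conditions it yields with the statement, or restrict to the only cases the paper subsequently uses --- $q=2$ with $p=1$ for the first norm and $p'=\infty$, $l'=0$ for the second --- where your thresholds and the stated ones happen to coincide. As written, the proposal asserts the conclusion rather than deriving it.
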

\begin{proof} We start by proving the first statement. This is a simple estimate,
\begin{eqnarray*}
	\|\omega_z^{l}\|^p_{A^p_q(\bb D^c)} &=&\int_{\bb D^c} |\omega_z(w)|^{lp}\lambda_{\bb D^c}^{2-pq}(w) d^2w \\
	&\leq& C \int_{\bb D^c} \frac{1}{|w-z|^{(l-q)p + 2}|w-z|^{pq-2}\lambda_{\bb D^c}^{pq-2}(w)}d^2w \\ &\leq& 2 \pi C' \int_{R}^{\infty} \frac{1}{r^{(l-q)p+1}}dr
	< \infty \qquad \Leftrightarrow \quad (l-q)p+1>1\;,
\end{eqnarray*}
which is the same as $l>q$. The second step in the estimate follows by estimating $|z-w|\geq \textrm{dist}(z,\partial D)$ and once again using the asymptotic property $\lambda_D(z)\textrm{dist}(z,\partial D) \in O(1)$ g for $z\rightarrow \infty$ (see Eq.~ \eqref{FEoPD}). $R$ is chosen such that $0<R< \textrm{dist}(z,\partial \bb D)$. The case $p = \infty$ follows similarly by the same ingredients, but there we only need $l\geq q$ since $|w-z|^\alpha$ is bounded for $\alpha \leq 0$. For the second statement, a very similar computation yields the requirements $p' \geq 2$ and $l'> 2 + (p'-2)q$, where the first one comes from the fact that the total power of the Poincar\'e density in the integral this time is $2q-2+2-qp' = (2-p')q$, which should be $\leq 0$ for the factor to possibly be compensated as in the case above. So we get $p'\geq 2$. After compensating the powers of $\lambda_{\bb D^c}$, the integrability requirement yields the second inequality. The $p'=\infty$ estimate again uses the boundedness of $|\nu|$ and $|z-w|^{-1}$, the power of which is $l'$, which has to compensate the power $q-2$ of $\lambda_{\bb D^c}$, so $l'\geq q-2$.
\end{proof}
Beware however, as finiteness of the $L^p_q(D)$-norm does not always mean that the function belongs to $L^p_q(D)$: It also has to be of order $O(|z|^{-2q})$ for $|z| \rightarrow \infty$ if $\infty \in D$. This condition is vacuous if $\infty \notin D$, of course, and that is the reason we stated the lemma that way above. For our applications we will need the following special case, namely $l'=0$ and $l=n+1$. But then necessarily $p'=\infty$, hence $p=1$, and then $0=l'\geq q-2$ implies that $q=2$. Then in addition the decay condition at infinity implies the following corollary.
\begin{cor}
    We have for $q\geq 2$,
    $$\omega_z^l \in A^1_q(\bb D^c) \quad \Leftrightarrow \quad l\geq 2q\;, \qquad  \overline{\nu}\lambda^{2q-2}_{\bb D^c} \in L^{\infty}_q(\bb D^c) \quad \Leftrightarrow \quad q=2\;.$$ 
\end{cor}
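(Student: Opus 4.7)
The plan is to read off both equivalences as specialisations of the preceding lemma, supplemented by the order-of-decay condition at infinity that must be imposed whenever $\infty \in \bb D^c$.

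For the first equivalence, I set $p=1$ in the lemma, which gives finiteness of $\|\omega_z^l\|_{A^1_q(\bb D^c)}$ precisely when $l > q$. Since $\infty \in \bb D^c$, membership in $A^1_q(\bb D^c)$ further requires the decay $\omega_z^l(w) \in \cal O(|w|^{-2q})$ as $w\to\infty$. Because $\omega_z^l(w) = (w-z)^{-l} = \cal O(|w|^{-l})$, this forces $l \geq 2q$, and for $q\geq 2$ this condition already subsumes the norm-finiteness condition $l > q$. Hence the first equivalence collapses to $l \geq 2q$.

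For the second equivalence, I take $l' = 0$, so that $\omega_z^{l'} \equiv 1$, and $p'=\infty$ in the lemma; the finiteness statement reduces to the single inequality $l' \geq q-2$, i.e.\ $0 \geq q-2$. Combined with the standing hypothesis $q\geq 2$, this pins down $q=2$. The decay condition at infinity is then automatic: since $\lambda_{\bb D^c}(w) \sim |w|^{-2}$ there, the function $\ol \nu \lambda^{2}_{\bb D^c}$ is $\cal O(|w|^{-4})$, matching the $\cal O(|z|^{-2s})$ requirement at $s = q = 2$.

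I do not anticipate any substantive obstacle; the corollary is really the bookkeeping restriction of the preceding lemma to the only $(l,l',p,p')$-regime that arises in the intended application, with the $\infty$-decay condition made explicit. The one mild point that deserves care in the writeup is verifying that for $q>2$ the term $\lambda_{\bb D^c}^{q-2}$ genuinely fails to be bounded near $\partial \bb D$, so that no weaker hypothesis on $\nu$ could rescue the $L^\infty_q$-membership; this is immediate from $\lambda_{\bb D^c}(w) \to \infty$ as $w \to \partial \bb D$.
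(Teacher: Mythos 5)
Your proof is correct and is essentially the paper's own argument: the paper likewise obtains the corollary by specializing the preceding lemma to $p=1$, $p'=\infty$, $l'=0$ and then invoking the decay condition $f\in\cal O(|z|^{-2q})$ at $\infty\in\bb D^c$, which upgrades $l>q$ to $l\geq 2q$ and forces $q=2$ in the second statement. Your extra checks (that $l\geq 2q$ subsumes $l>q$ for $q\geq 2$, and that $\lambda_{\bb D^c}^{q-2}$ blows up at $\partial\bb D$ for $q>2$) are just the details the paper leaves implicit.
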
  
So returning to our point in \eqref{FormalProduct} and using the previous corollary, we get the following criterion for a measurable function to be in the kernel of the derivative of the higher Bers mappings:
$$ D\tilde \beta^\bullet_n (\nu) \equiv  0 \qquad \Longleftrightarrow\qquad \left \langle \omega_z^{n+1}, \bar \nu\lambda^{2}_{\bb D^c} \right \rangle^{\bb 1}_2 = 0 \quad \forall \; z \in \bb D\;.$$
We can optimize this further by using the projection operator $\beta_q: L^p_q(D,G)\rightarrow A^p_q(D,G)$, which is symmetric with respect to the Weil-Petersson pairing (see Theorem \ref{last}), in particular implying that
$$ \left \langle \beta_q (f),\nu \right \rangle^G_q =  \left \langle f,\nu \right \rangle^G_q = \left \langle f,\beta_q (\nu) \right \rangle^G_q \quad \mathrm{if} \; f \in A^1_q(D,G),\; \nu \in L^\infty_q(D,G).$$ 
Hence we get the following criterion for the kernel:
\begin{eqnarray} \label{KernelCrit} D_0\tilde \beta^\bullet_n (\nu) \equiv  0 \quad \Longleftrightarrow\quad \left \langle \omega_z^{n+1}, \beta_2\left(\bar \nu\lambda^{2}_{\bb D^c}\right) \right \rangle^{\bb 1}_2 = 0 \quad  \forall \; z \in \bb D\;. \end{eqnarray}
Since both entries are now holomorphic, we can use the fact that the pairing restricted to $A^p_q\times A^{p'}_q$ is non-degenerate (Thm.~ \ref{NondegenerateOnAs}). Moreover, the following map, known as the (generalized) Bers differential,  
$$ \psi_2: L^\infty(\bb D^c) \rightarrow L^\infty_2(\bb D^c)\;, \qquad \psi_q(\nu) =  \bar \nu  \lambda^2_{\bb D^c}\;,$$
is obviously an isometric isomorphism which induces isometric isomorphisms for any Fuchsian group $G$ by restriction,
$$ \psi_q: L^\infty_{(-1,1)} (\bb D^c, G) \rightarrow L^\infty_2(\bb D^c, G)\;.$$
Since $\beta_2$ is a projection, the right entry of the pairing \eqref{KernelCrit} generates all of $B_2(\bb D^c)$, and likewise, it generates all of $B_2(\bb D^c, G)$ iff $\beta_2$ is restricted to $L^\infty_{(-1,1)} (\bb D^c, G)$. Now by Lemma \ref{scalar}, if one of the entries, say $f$, in the Weil-Petersson pairing is $q$-automorphic for the group $G$, then we can rewrite the product using the Poincar\'e series operator,
$$ \langle f,g\rangle^{\bb 1}_q = \langle f, \Theta_q[g] \rangle^{G}_q \;.$$
So if we define the following spaces for integer $l\geq 2q$,
$$ A_q^1(\bb D^c, G) \supset \cal A^l_q(G):=\mathrm{cl}_{A_q^1(\bb D^c, G)}\left(\mathrm{span}_\bb C\left \{ \Theta_q[\omega^l_z], z \in \bb D \right \} \right)\;,$$
and as usual drop the group $G$ from the notation if $G = \bb 1$, then we have proven the following theorem:
\begin{thm}\label{KernelHBM}
    Let $G$ be a Fuchsian group. The kernel of the differential of the higher Bers maps at the origin is given by
    $$ \mathrm{Ker}\:D_0\tilde \beta^\bullet_n \big|_{L_{(-1,1)}^\infty(\bb D^c, G)} = \psi_2^{-1} \left(\cal A^{n+1}_2(G)\right)^\perp\;,$$
    where $\perp$ denotes the orthogonal subspace for the pairing $\langle \cdot\:, \cdot\rangle^G_2$.
\end{thm}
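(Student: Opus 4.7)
The plan is to extract the kernel characterization directly from the criterion \eqref{KernelCrit} already derived in the text. That criterion states that $\nu \in \mathrm{Ker}\:D_0\tilde\beta^\bullet_n$ iff $\langle \omega_z^{n+1},\beta_2(\psi_2(\nu))\rangle^{\bb 1}_2 = 0$ for every $z \in \bb D$. What remains is to rewrite this trivial pairing as a $G$-pairing against $\psi_2(\nu)$ itself and then upgrade the pointwise-in-$z$ vanishing into orthogonality against the closed subspace $\cal A^{n+1}_2(G)$.

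First I would verify the integrability hypotheses. Since $n \geq 3$, the exponent $l = n+1$ satisfies $l \geq 4 = 2q$ for $q = 2$, so by the technical lemma together with the decay bound $|\omega_z^{n+1}(w)| = O(|w|^{-(n+1)})$ at infinity, we have $\omega_z^{n+1} \in A^1_2(\bb D^c)$. Consequently $\Theta_2[\omega_z^{n+1}]$ is well-defined and lies in $A^1_2(\bb D^c, G)$ by Thm.~\ref{PS}. Invoking Lemma \ref{scalar} with $\beta_2(\psi_2(\nu)) \in L^\infty_2(\bb D^c, G)$ playing the role of the bounded factor (after the obvious complex-conjugation swap required by the sesquilinearity of the pairing) converts
$$\langle \omega_z^{n+1},\beta_2(\psi_2(\nu))\rangle^{\bb 1}_2 = \langle \Theta_2[\omega_z^{n+1}],\beta_2(\psi_2(\nu))\rangle^G_2.$$
Moreover, since $\Theta_2[\omega_z^{n+1}]$ is already holomorphic, the symmetry of $\beta_2$ with respect to the Weil-Petersson pairing (Thm.~\ref{last}) lets me replace $\beta_2(\psi_2(\nu))$ by $\psi_2(\nu)$ on the right-hand side without changing the value of the pairing.

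Next I would propagate the resulting vanishing condition. By $\bb C$-linearity it extends to all of $\mathrm{span}_{\bb C}\{\Theta_2[\omega_z^{n+1}] : z \in \bb D\}$, and by continuity to its $A^1_2$-closure $\cal A^{n+1}_2(G)$. The continuity step is automatic: the map $f \mapsto \langle f, \psi_2(\nu)\rangle^G_2$ is a bounded linear functional on $A^1_2(\bb D^c, G)$ by Thm.~\ref{NondegenerateOnAs}, with operator norm controlled by $\|\psi_2(\nu)\|_{L^\infty_2}$. Thus the kernel condition is equivalent to $\psi_2(\nu) \in (\cal A^{n+1}_2(G))^\perp$, and since $\psi_2$ restricts to an isometric isomorphism $L^\infty_{(-1,1)}(\bb D^c, G) \to L^\infty_2(\bb D^c, G)$, this rewrites as $\nu \in \psi_2^{-1}\bigl((\cal A^{n+1}_2(G))^\perp\bigr)$, as claimed.

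The main technical point is the bookkeeping in the Poincar\'e-series application: one must ensure the hypotheses of Lemma \ref{scalar} are met on the correct side of the pairing, and in particular that the endpoint case $n = 3$ (where $\omega_z^{n+1}$ barely fits into $A^1_2$ at infinity) is valid. Once that is settled, the result is essentially a duality-theoretic repackaging of the pointwise criterion \eqref{KernelCrit} and does not require any further substantive input beyond the machinery recorded in Section~\ref{BSoAF}.
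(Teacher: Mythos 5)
Your proposal is correct and follows essentially the same route as the paper: the paper also obtains the theorem as a direct repackaging of the criterion \eqref{KernelCrit}, using the unfolding identity of Lemma \ref{scalar} to pass to the $G$-pairing against $\Theta_2[\omega_z^{n+1}]$, the symmetry of $\beta_2$ to remove the projection, and linearity plus continuity of the pairing to pass to the closure $\cal A^{n+1}_2(G)$ before pulling back through the isometry $\psi_2$. Your extra care with the integrability at $l=2q$ and with which slot of the sesquilinear pairing carries the automorphic factor only makes explicit what the paper leaves implicit.
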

The theorem, however, gains true content only after a more explicit description of the spaces    
$\cal A^l_q(G)$ and the inclusion $\cal A^l_q(G) \subset A_q^1(\bb D^c,G)$. \\

We first start with the case $G=\bb 1$. Reinterpreting Lemma 3 from \cite{An-sIEwAtQM} in this notation, it says that $\cal A^{2q}_q = A_q^1(\bb D^c)$. Observe that for $q=2$ this characterizes the kernel of the Bers embedding since then $2q = n+1$. In general, however, $\cal A^l_q$ will be a proper subspace of codimension $l-2q$, as the following proposition shows.
\begin{thm}
    For any  $l \geq 2q$ we have
    $$ A^1_q(\bb D^c) = \cal A^l_q \oplus \bb C[1/w]_{\mathrm{deg} < l-2q}\cdot w^{-2q}\;.$$
\end{thm}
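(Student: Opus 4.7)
The plan is to transfer the problem to the unit disc $\bb D$ via the biholomorphism $\psi: \bb D \to \bb D^c$, $\psi(\zeta) = 1/\zeta$. Under the resulting pull-back isomorphism $\psi^*_q: A^1_q(\bb D^c) \to A^1_q(\bb D)$, a direct computation gives $\psi^*_q(w^{-j}) = \pm\zeta^{j-2q}$ and $\psi^*_q(\omega_z^l) = \pm\zeta^{l-2q}(1-z\zeta)^{-l}$, so that $V := \bb C[1/w]_{\deg<l-2q}\cdot w^{-2q}$ maps isomorphically onto the polynomial space $\bb C[\zeta]_{<l-2q}$, while $\cal A^l_q$ maps onto the closed span $\widetilde{\cal A}^l_q$ of $\{\zeta^{l-2q}(1-z\zeta)^{-l}:z\in\bb D\}$ inside $A^1_q(\bb D)$. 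The theorem thus reduces to the decomposition $A^1_q(\bb D) = \widetilde{\cal A}^l_q \oplus \bb C[\zeta]_{<l-2q}$.

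The key claim I would establish is the identification $\widetilde{\cal A}^l_q = W_{l-2q}$, where $W_{l-2q}\subset A^1_q(\bb D)$ denotes the closed subspace of functions vanishing to order at least $l-2q$ at the origin. Once this identification is in place, the required decomposition is simply the Taylor splitting at $\zeta=0$: every $F\in A^1_q(\bb D)$ is the sum of its degree-$(l-2q-1)$ Taylor polynomial and a remainder vanishing to the required order, and the intersection $W_{l-2q}\cap\bb C[\zeta]_{<l-2q}=\{0\}$ is immediate.

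The inclusion $\widetilde{\cal A}^l_q\subset W_{l-2q}$ follows at once from the power series $\zeta^{l-2q}(1-z\zeta)^{-l}=\sum_{k\geq 0}\binom{l+k-1}{k}z^k\zeta^{l-2q+k}$. The main obstacle is the reverse inclusion. To handle it, I would first pull back Bers' Lemma~3 (the already invoked statement $\cal A^{2q}_q = A^1_q(\bb D^c)$) to conclude that $\{(1-z\zeta)^{-2q}:z\in\bb D\}$ is total in $A^1_q(\bb D)$. Since each such function extends holomorphically to the disc of radius $|z|^{-1}>1$, its Taylor series converges uniformly on $\overline{\bb D}$ and hence in $A^1_q$-norm (the weight $\lambda_{\bb D}^{2-q}$ is integrable over $\bb D$ for $q\geq 2$), which yields density of $\bb C[\zeta]$ in $A^1_q(\bb D)$. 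Given $F\in W_{l-2q}$, I would then pick polynomials $P_n\to F$ in $A^1_q$ and decompose $P_n = T_n + R_n$ with $T_n\in\bb C[\zeta]_{<l-2q}$ the low-degree Taylor part. Since $A^1_q$-convergence implies local uniform convergence, each Taylor coefficient is a continuous functional; the vanishing of the first $l-2q$ Taylor coefficients of $F$ then forces $T_n\to 0$ in the finite-dimensional space $\bb C[\zeta]_{<l-2q}$, and consequently $R_n\to F$ with each $R_n$ in $\mathrm{span}\{\zeta^j:j\geq l-2q\}$.

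To close the argument, I would verify that every monomial $\zeta^j$ with $j\geq l-2q$ lies in $\widetilde{\cal A}^l_q$. This I would deduce from the holomorphicity of $z\mapsto\zeta^{l-2q}(1-z\zeta)^{-l}$ as a map from $\bb D$ into the Banach space $A^1_q(\bb D)$: its $k$-th derivative at $z=0$ is, up to a nonzero scalar, exactly $\zeta^{l-2q+k}$, and Banach-valued holomorphic derivatives are norm-limits of difference quotients of the generators, hence lie in their closed linear span. The genuinely delicate step is therefore the polynomial approximation argument combined with continuity of the Taylor projection on $A^1_q(\bb D)$; both ultimately rest on the elementary fact that $A^1_q$-convergence implies local uniform convergence of holomorphic functions.
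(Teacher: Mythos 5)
Your proposal is correct, and while it shares the paper's overall skeleton (pull back to $\bb D$ via $w\mapsto 1/w$, identify the complement with the low-degree polynomials $\bb C[\zeta]_{<l-2q}$, and argue through Taylor coefficients), it diverges at the key step in a way worth noting. The paper shows that the ``middle'' Taylor block of an extendable function can be matched by a linear combination of \emph{truncated} generators $\tilde\omega^l_{z_k,N}$, by solving an $N\times N$ linear system whose Vandermonde-type determinant is a non-identically-vanishing polynomial on $\bb D^N$; it then asserts that the resulting approximants converge to an element of $\gamma^*_q\cal A^l_q$ because they are truncations of power series of such elements. Your route instead establishes the cleaner identification of $\widetilde{\cal A}^l_q$ with the closed subspace $W_{l-2q}$ of functions vanishing to order $\geq l-2q$ at the origin, by (i) showing every monomial $\zeta^{l-2q+k}$ lies in $\widetilde{\cal A}^l_q$ as a normalized derivative at $z=0$ of the Banach-valued holomorphic family $z\mapsto \zeta^{l-2q}(1-z\zeta)^{-l}$ (hence a norm-limit of finite combinations of generators), and (ii) combining this with density of polynomials and continuity of the Taylor-coefficient functionals. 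This buys you two things: you avoid the determinant/choice-of-points argument entirely, and you sidestep the paper's slightly delicate claim that truncations of generators stay close to $\cal A^l_q$ — in your version the only objects that need to lie in the closed span are exact monomials, which they demonstrably do. The one point you should spell out is the holomorphy of $z\mapsto \zeta^{l-2q}(1-z\zeta)^{-l}$ as a map $\bb D\to A^1_q(\bb D)$; this follows from the norm-convergence of its power series in $z$, since $\|\zeta^{l-2q+k}\|_{A^1_q(\bb D)}$ is uniformly bounded for $q\geq 2$ and the coefficients grow only polynomially in $k$. Your appeal to Bers' Lemma~3 for density of polynomials is legitimate (the paper invokes the same lemma), though a standard dilation argument would serve equally well there.
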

\begin{proof} Let us again simplify things by the M\"obius transformation $\gamma:\bb D\rightarrow \bb D^c$ and denote the coordinate on the disc by $\eta$. $\gamma^*_q$ induces an isomorphism of the ambient space as usual; let's see what the elements of $\gamma_q^*\cal A^l_q$ look like:
$$ \tilde \omega^l_z(\eta) := \gamma_q^*\omega^l_z(w) = \frac{\eta^{-2q}}{(\frac{1}{\eta}-z)^l} = \frac{\eta^{l-2q}}{(1-\eta z)^l}\;.$$
Our strategy will be to compare power series expansions of functions. A sequence $\{f_j\} \in \cal O(\bb D)$ converges uniformly to $f$ iff the coefficients of their power series expansions converge to those of $f$, and uniform convergence of course implies $L^1$-convergence. Moreover, since we are working on the disc, for which $\lambda_{\bb D}^{2-q}$ is uniformly bounded (for $q \geq 2$), $L^q$-convergence implies $L^1_q$-convergence. Hence we are done if we can show that the coefficients of the power series converge to each other. The power series expansion of a function $\phi \in \gamma^*_q \cal A_q^l$ is given by
$$ \frac{\eta^{l-2q}}{(1-\eta z)^l} = \eta^{l-2q}\sum_{j=0}^\infty \frac{(-1)^{j}(l-1+j)!}{(l-1)!} z^j \eta^j:=\eta^{l-2q} \sum_{j=0}^\infty c_jz^j\eta^j \;,$$
so in particular the coefficients of order less than $l-2q$ are identically zero. Let us denote the truncated version of this series by 
$$ \tilde \omega^l_{z,N}(\eta) := \eta^{l-2q} \sum_{j=0}^{N-1} c_j z^j \eta^j \;.$$
Now let $f \in \tilde A^1_q(\bb D)$ be a function, $f = \sum b_{i+2q-l} \eta^i$ its power series, which converges uniformly on $\bb D$ since $f$ is holomorphic in some larger domain $D \supset \bar {\bb D}$, and split it as follows,
$$ f = f_0 + f_N + r_N\;,$$
where $f_0$ is the part of the expansion of order less than $l-2q$, $f_N$ the next $N$ terms and $r_N$ the remainder. By choosing $N$ points in $z_k(N) \in \bb D$ and $N$ numbers $b_k(N) \in \bb C$ appropriately we claim that we can achieve 
$$ 0 = \sum_{k=0}^{N-1} b_k(N) \tilde \omega^l_{z_k(N),N}(\eta) - f_N\;. $$
This amounts solving the following linear system for $b_k$,
\begin{eqnarray*}
    a_l = \sum_{k=0}^{N-1} c_k z_k^l b_k \;,\qquad l=0\ldots N-1\;,
\end{eqnarray*}
which is of course possible iff the determinant of the matrix
\begin{eqnarray*}
   \left( \begin{array}{cccc} c_0 & c_1& \ldots & c_{N-1}\\
                              c_0z_0 & c_1z_1 & \ldots & c_{N-1}z_{N-1}\\
                              c_0z_0^2 & c_1z^2_1 & \ldots & c_{N-1}z^2_{N-1}\\
                              \vdots & \vdots& \ddots&\vdots\\
                              c_0z_0^{N-1} & c_1z^{N-1}_1 & \ldots & c_{N-1}z^{N-1}_{N-1}\end{array} \right)         \end{eqnarray*}
does not  vanish. But the determinant is just a polynomial function on the product $\bb D^{N}$ which doesn't vanish identically, hence such choices are possible. Make such a choice for all $N$ and call the resulting functions $\phi_N:=\sum_{k=0}^{N-1} b_k \tilde \omega^l_{z_k,N}(\eta)$. The sequence $\phi_N$ obviously converges uniformly to $f-f_0$ and to an element of $\gamma^*_q\cal A^l_q$ since it is just the truncation of the power series of such a function. On the other hand, it is obvious that the term $f_0$ can not be approximated by elements of $\gamma^*_q\cal A^l_q$ because of the vanishing of the first coefficients. The proof concludes with the fact that any $g \in A_q^1(\bb D)$ can be approximated by elements of $\tilde A_q(\bb D)$ in the $A^1_q$-norm (see, e.g., Lemma 3 of \cite{An-sIEwAtQM}), so we have established
\begin{eqnarray}\label{DSD} A_q^1(\bb D) = \gamma^*_q\cal A^l_q \oplus \bb C[\eta]_{\mathrm{deg} P < l-2q}\;,\end{eqnarray}
which immediately implies the statement of the proposition.
\end{proof}
The $G$-version of this theorem is obtained as follows: Since $\Theta_q$ is a bounded operator it commutes with closure because of the closed graph theorem, so
$$ \mathrm{cl}_{A_q^1(\bb D^c, G)}\left \{ \Theta_q[\omega^l_z], z \in \bb D \right \} = \Theta_q \left[\mathrm{cl}_{A_q^1(\bb D^c)}\{\omega^l_z, z \in \bb D \}\right]\;,$$
which can also be written as
$$ \cal A_q^l(G) = \Theta_q[\cal A^l_q]\;.$$
Of course, $\Theta_q$ does not respect the direct sum decomposition \eqref{DSD}, 
so after applying $\Theta_q$ we merely get (observe that the action of $\gamma^*_q$ commutes with $\Theta_q$)
$$ A_q^1(\bb D, G) = \gamma^*_q \cal A^l_q(G) + \Theta_q\left(\bb C[\eta]_{\mathrm{deg} P < l-2q}\right)\;.$$
So in order to understand the mapping property of the differential in the G-setting we need to understand the intersection
$$ I_q^l(G):=\gamma^*_q \cal A^l_q(G) \cap \Theta_q\left(\bb C[\eta]_{\mathrm{deg} P < l-2q}\right)\;.$$
We will attack $I_q^l(G)$ with the help of a nice representation of the kernel of $\Theta_q$ given by Metzger. To determine the kernel of the Poincar\'e operator is a very old and very hard problem, especially if one considers the problem for general factors of automorphy. To the best of the author's knowledge, there is no general theorem or even  algorithm to determine whether a given function belongs to the kernel of $\Theta_{\rho_q}$ for an arbitrary factor of automorphy $\rho_q$. Nevertheless, the following theorem does yield information on the kernel of the Poincar\'e operator for the $q$-canonical factor and is enough for our purpose.
\begin{thm}[Metzger, \cite{TKotPSO}]
   Let $G$ be a Fuchsian group acting on $\bb D$, $q \geq 2$ an integer and $\Theta_q$ the Poincar\'e series operator of the q-canonical factor of automorphy for the group $G$. Let $p(z;k,g):=z^k - g(z)^k(g'(z))^q$. Then
   $$ \ker \Theta_q =\mathrm{cl}_{A_q^1(\bb D)}\left(\mathrm{span}_\bb C\left \{ p(z;k,g), \: k \in \bb N,\: g \in G\right\}\right)\;.$$
\end{thm}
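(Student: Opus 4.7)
\emph{Proof proposal.} My plan is to handle the inclusion $\supseteq$ by direct calculation and the reverse inclusion $\subseteq$ by a Hahn--Banach duality argument combined with Lemma~\ref{scalar}. For the easy direction, fix $g_0 \in G$ and $k \in \bb N$. Applying the chain rule $((g_0 g)'(z))^q = (g_0'(g(z)))^q (g'(z))^q$ and using that left multiplication by $g_0$ permutes $G$, one computes
\benn
\Theta_q[p(\cdot; k, g_0)](z) &= \sum_{g \in G} g(z)^k (g'(z))^q - \sum_{g \in G} (g_0 g)(z)^k ((g_0 g)'(z))^q \\
&= \sum_{g \in G} g(z)^k (g'(z))^q - \sum_{h \in G} h(z)^k (h'(z))^q = 0.
\eenn
Since $\Theta_q$ is bounded (Theorem~\ref{PS}), $\ker \Theta_q$ is closed and therefore contains the closed span on the right-hand side of the claimed identity.

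For the reverse inclusion, denote that closed span by $K$. Since $K$ is closed in $A_q^1(\bb D)$, the containment $\ker \Theta_q \subseteq K$ follows once we show that every continuous linear functional on $A_q^1(\bb D)$ vanishing on $K$ also vanishes on $\ker \Theta_q$. By Theorem~\ref{NondegenerateOnAs} with $p=1$, each such functional has the form $l_\psi(\cdot) = \langle \cdot, \psi\rangle_q^{\bb 1}$ for a unique $\psi \in B_q(\bb D)$, so the reduction is to prove:
\begin{enumerate}
\item[(i)] $l_\psi|_K = 0$ if and only if $\psi \in B_q(\bb D, G)$;
\item[(ii)] if $\psi \in B_q(\bb D, G)$, then $l_\psi|_{\ker \Theta_q} = 0$.
\end{enumerate}

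Implication (ii) is immediate from Lemma~\ref{scalar}: for any $h \in \ker \Theta_q \cap A_q^1(\bb D)$, the lemma with $f = \psi$ and Poincar\'e series equal to $\Theta_q[h] = 0$ gives $\langle \psi, h\rangle_q^{\bb 1} = \langle \psi, 0\rangle_q^G = 0$, and $l_\psi(h) = \langle h, \psi\rangle_q^{\bb 1}$ is the complex conjugate. For (i), a change of variables $w = g(z)$, using the $G$-invariance $\lambda_{\bb D}(g(z))|g'(z)| = \lambda_{\bb D}(z)$, rewrites the defining pairing as
$$ \langle g(z)^k (g'(z))^q, \psi \rangle_q^{\bb 1} = \langle z^k, (\psi \circ g^{-1})((g^{-1})')^q \rangle_q^{\bb 1}. $$
Hence $l_\psi|_K = 0$ amounts to $\langle z^k, \psi - (\psi \circ g^{-1})((g^{-1})')^q \rangle_q^{\bb 1} = 0$ for all $k \in \bb N$, $g \in G$. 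Each function $\phi_g := \psi - (\psi \circ g^{-1})((g^{-1})')^q$ lies in $B_q(\bb D)$, and since polynomials are dense in $A_q^1(\bb D)$ (approximate $f$ by its dilates $f(rz)$ and these in turn by their Taylor polynomials) the non-degeneracy half of Theorem~\ref{NondegenerateOnAs} forces $\phi_g \equiv 0$. Writing $h = g^{-1}$, this is exactly the $G$-automorphy $(\psi \circ h)(h')^q = \psi$, so $\psi \in B_q(\bb D, G)$. The reverse direction in (i) is immediate from the same identity.

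The main obstacle is the change of variables in (i): one must combine $(g'(z))^q$ with the Jacobian $|(g^{-1})'(w)|^2$ and the four powers of $\lambda_{\bb D}$ so that the holomorphic automorphy factor $((g^{-1})'(w))^q$ emerges intact, rather than an ungainly mixture of $(g^{-1})'$ and $\overline{(g^{-1})'}$. This works precisely because the exponent $2-2q$ of $\lambda_{\bb D}$ in the Weil--Petersson pairing is tuned to exactly absorb the $|g'|$-factors produced by the substitution; this is the only delicate point in the argument and is the place where the specific form of the pairing matters.
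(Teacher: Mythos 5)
The paper does not actually prove this statement---it is quoted from Metzger \cite{TKotPSO} as a known result---so there is no in-paper argument to compare against; I can only assess your proposal on its own terms. It is correct, and it is in substance the standard duality proof (and essentially Metzger's own): the easy inclusion by reindexing the absolutely convergent Poincar\'e series over the left coset bijection $g\mapsto g_0g$, and the hard inclusion by using Theorem \ref{NondegenerateOnAs} with $p=1$ to represent every functional as $l_\psi$ with $\psi\in B_q(\bb D)$, identifying the annihilator of the closed span with $B_q(\bb D,G)$ via the change of variables, and then invoking Lemma \ref{scalar} to see that such $l_\psi$ kill $\ker\Theta_q$. The exponent bookkeeping you flag as the delicate point does work out: the substitution produces $((g^{-1})')^{-q}\cdot|(g^{-1})'|^{2q-2}\cdot|(g^{-1})'|^{2}=\overline{((g^{-1})')^{q}}$, which is exactly the conjugated automorphy factor needed, and the Hahn--Banach reduction is sound. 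One caveat worth making explicit: your density step concludes $\phi_g\equiv 0$ from $\langle z^k,\phi_g\rangle_q^{\bb 1}=0$, and this needs the constant monomial $z^0$ among the test functions, i.e.\ $0\in\bb N$ in the index set (equivalently, the generators must include $1-(g')^q$). Under the convention $\bb N=\{1,2,\dots\}$ you would only learn that each $\phi_g$ annihilates the closed hyperplane $\{f:f(0)=0\}$, hence is a constant multiple of the $q$-Bergman kernel at $0$ (a constant function), and a short extra cocycle argument on $g\mapsto c_g$ would be needed to kill that constant. This ambiguity is inherited from the statement itself, but your argument silently relies on the $0\in\bb N$ reading.
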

We can now state:     
\begin{thm} \label{PSSpan}
   Let $G \neq \bb 1$ be Fuchsian. Then $\cal A_q^l(G) = A_q^1(\bb D^c, G)$. 
\end{thm}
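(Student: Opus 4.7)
The plan is to reduce the claim to a finite-dimensional linear-algebra question by combining Metzger's characterization of $\ker\Theta_q$ with the direct sum decomposition $A_q^1(\bb D) = \gamma_q^*\cal A_q^l \oplus \bb C[\eta]_{\mathrm{deg}<N}$ (with $N:=l-2q$) from the preceding theorem. Transferring to $\bb D$ via $\gamma_q^*$, the Taylor-truncation map $\pi_N:A_q^1(\bb D)\to\bb C[\eta]_{\mathrm{deg}<N}$ is bounded (by a Cauchy estimate against the $A_q^1$-norm) and has kernel precisely $\gamma_q^*\cal A_q^l$. Invoking the classical surjectivity of $\Theta_q$ onto $A_q^1(\bb D,G)$ for first-kind Fuchsian groups, the decomposition yields
$$ A_q^1(\bb D,G) \;=\; \Theta_q(\gamma_q^*\cal A_q^l) + \Theta_q(\bb C[\eta]_{\mathrm{deg}<N})\;, $$
so the theorem reduces to showing $\Theta_q(\bb C[\eta]_{\mathrm{deg}<N}) \subset \Theta_q(\gamma_q^*\cal A_q^l)$.

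This inclusion is equivalent to $\pi_N(\ker\Theta_q) = \bb C[\eta]_{\mathrm{deg}<N}$: given $P\in\bb C[\eta]_{\mathrm{deg}<N}$, picking $K\in\ker\Theta_q$ with $\pi_N(K)=-P$ places $P+K$ in $\ker\pi_N = \gamma_q^*\cal A_q^l$ while $\Theta_q(P) = \Theta_q(P+K)$. By Metzger's theorem and continuity of $\pi_N$ into a finite-dimensional space, one has $\pi_N(\ker\Theta_q) = \mathrm{span}\{\pi_N(p(\eta;m,g)) : m\in\bb N,\,g\in G\}$, so it suffices to produce $N$ linearly independent images there. Fix $g\in G$ with $z_0:=g(0)\neq 0$ — an element guaranteed by the hypothesis $G\neq\bb 1$, since the stabilizer of the basepoint in a first-kind Fuchsian group is finite, and if every element of $G$ should happen to fix $0$ a preliminary Möbius change of origin removes this degeneracy. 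For $m\geq N$ one has $\pi_N(p(\eta;m,g)) = -\pi_N(g(\eta)^m g'(\eta)^q)$, and expanding in the invertible local parameter $v:=g(\eta)-z_0$ gives
$$
   \pi_N\!\bigl(g(\eta)^m g'(\eta)^q\bigr) \;=\; \sum_{j=0}^{N-1}\binom{m}{j}\,z_0^{m-j}\,\pi_N\!\bigl(v^j g'(\eta)^q\bigr)\;,
$$
where the $N$ functions $\pi_N(v^j g'(\eta)^q)$ already form a basis of $\bb C[\eta]_{\mathrm{deg}<N}$ (their matrix in the monomial basis is lower triangular with nonzero diagonal entries proportional to $g'(0)^{j+q}$). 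Running $m$ over $N,N+1,\ldots,2N-1$, the coefficient matrix $\bigl(\binom{m}{j}z_0^{m-j}\bigr)$ has a Vandermonde-type determinant which is a nonzero power of $z_0$ times $\prod_{i<k}(m_i-m_k)\neq 0$, hence nonsingular; so the corresponding $\pi_N(p(\eta;m,g))$ do span $\bb C[\eta]_{\mathrm{deg}<N}$, closing the argument.

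The main obstacle is this final step, where the hypothesis $G\neq\bb 1$ must be converted into the existence of an element not fixing the chosen basepoint: for a pure finite rotation group the conclusion would genuinely fail (as a direct expansion into Fourier modes with respect to the rotation shows), so that hypothesis has to be read in the appropriate sense, e.g.\ as a first-kind Fuchsian group. Once such a $g$ is in hand, the identification of $\ker\pi_N$ with $\gamma_q^*\cal A_q^l$, the use of Metzger's theorem to reach a span problem in a finite-dimensional quotient, and the determinant calculation are entirely routine.
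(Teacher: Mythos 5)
Your proof is correct and follows essentially the same route as the paper's: the direct-sum decomposition $A_q^1(\bb D) = \gamma_q^*\cal A_q^l \oplus \bb C[\eta]_{\mathrm{deg} < l-2q}$, Metzger's description of $\ker \Theta_q$ together with the surjectivity of $\Theta_q$, and a determinant argument showing that the degree-$<(l-2q)$ truncations of the kernel generators $p(\cdot;k,g)$, for a single suitable $g$ and $l-2q$ values of the exponent, span the polynomial complement. Your write-up is in fact tidier at the two delicate points: you make explicit that one needs an element with $g(0)\neq 0$ (the paper's expansion in powers of $c/d$ silently assumes $c\neq 0$, and the statement does fail for finite rotation groups fixing the basepoint), and your Vandermonde computation replaces the paper's insufficient ``no two columns are linearly dependent'' justification of nonsingularity.
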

\begin{proof} 
We need to write the functions $p(z;k,g):=z^k - g(z)^k(g'(z))^q$ as power series,
\benn
   p(z;k,g)&:=z^k - g(z)^k(g'(z))^q = z^k - \frac{(az+b)^k}{c^{k+2q}(z+d/c)^{k+2q}} \\&= z^k - \sum_{j=0}^{\infty} \frac{(-1)^j(k+2q+j)!}{(k+2q)!}\left(\frac{c}{d}\right)^{k+2q+j} z^j \\
   &= \tilde p(z;k,g) - R(z;k,g)\;,
\eenn
where $\tilde p(z;k,g)$ only contains terms up to the power $<l-2q$ in $z$ and $R(z;k,g)$ is the remainder. By linearity of $\Theta_q$ and the fact that $p(z;k,g)$ is in the kernel we get,
$$ \Theta_q[\tilde p(z;k,g)] = \Theta_q[R(z;k,g)]\;,$$
and evidently, $\tilde p(z;k,g) \in \bb C[z]_{<l-2q}$ and $R(z;k,g) \in \gamma^*_q\cal A^l_q$, so we know that
$$ \tilde P(G) := \mathrm{span}_{\bb C} \{\tilde p(z;k,g), \: k \in \bb N,\: g \in G\} \subset I_q^l(G)\;.$$
The remaining thing to understand is under which conditions $\tilde P(G) = \bb C[z]_{<l-2q}$. A first observation is that the kernel of the Poincar\'e series operator is infinite-dimensional if $G \neq \bb 1$, because of the different choices of $k \in \bb N$. Each choice of $k$ also affects the truncated series $\tilde p(z;k,g)$, so for any non-trivial group $G$ we already have infinitely many polynomials $\tilde p(z;k,g)$. We only have to make sure that this set contains a basis of $\bb C[z]_{<l-2q}$. But this is rather easy to see: First of all, for any non-trivial $g \in G$ thought of as an element of SL$(2,\bb C)$, the quantity $A:= |c/d| \neq 1$ has non-unit modulus. Then this implies that it is possible to choose $l-2q=:N$ natural numbers $k_n$ such that the equation
$$ \lambda_n \sum \tilde p(z;k_n,g) - z^d = 0$$
has a solution $\{\lambda_n(d)\} \in \bb C^N$ for all $d < N$, by writing this equation as a matrix equation $\lambda_n B_{nl}z^l =0$ with coefficient matrix obtained from the matrix
\begin{eqnarray*}
 \tilde B:= \left( \begin{array}{cccc} A^{k_1} & \ldots& (-1)^j\frac{(k_1 + 2q + j)!}{(k_1 + 2q)!}A^{k_1 + j} &\ldots\\
                           \vdots & \ddots &\vdots \\
                          A^{k_N} & \ldots& (-1)^j\frac{(k_N + 2q + j)!}{(k_N + 2q)!}A^{k_N + j} &\ldots \end{array}\right)
\end{eqnarray*}
by subtracting $1$ from the $d$-th column. But it is obvious by looking at the matrix that no two columns are linearly dependent even if $1$ should be subtracted from any of them as long as all $k_n$ are mutually different. Hence we have shown that $\tilde P(G) = \bb C[z]_{<l-2q}$, and therefore the proposition.
\end{proof}
\begin{cor} \label{InjOnTan}
  $D_0\beta^\bullet_n$ is injective when restricted to the tangent space of $\cal T_B(G)$ for any $G \neq \bb 1$.
\end{cor}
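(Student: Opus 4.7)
The plan is to assemble Theorem \ref{KernelHBM} and Theorem \ref{PSSpan} together with the standard identification of the tangent space of the Beltrami model. First I would identify
$$ T_{[0]}\cal T_B(G) \;\cong\; L^\infty_{(-1,1)}(\bb D^c,G)\big/\ker D_0\tilde\beta\;, $$
using that the classical Bers embedding $\beta$ (Thm.~\ref{BembThm}) is a holomorphic embedding factoring through $\pi_T$, so $\ker D_0\tilde \beta = \ker D_0 \pi_T$. Specializing Theorem \ref{KernelHBM} to $n=3$ gives $\ker D_0\tilde\beta\big|_{L^\infty_{(-1,1)}(\bb D^c,G)} = \psi_2^{-1}(\cal A^4_2(G))^\perp$.

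Next I would check that $D_0\beta^\bullet_n$ actually descends to this quotient, i.e., that $\ker D_0\tilde\beta \subseteq \ker D_0\tilde\beta^\bullet_n$. By Theorem \ref{KernelHBM} this amounts to the inclusion $\cal A^{n+1}_2(G)\subseteq \cal A^4_2(G)$. For $G=\bb 1$ this is immediate because the right-hand side is all of $A^1_2(\bb D^c)$ (by the reference to Lemma 3 of \cite{An-sIEwAtQM} quoted in the excerpt), and for $G\neq \bb 1$ both sides are all of $A^1_2(\bb D^c,G)$ by Theorem \ref{PSSpan}. That the image of $D_0\beta^\bullet_n$ lands in $B_{n-1}(\bb D,G)$ rather than merely $B_{n-1}(\bb D)$ follows from Lemma \ref{HSDMoebius} applied to elements of $G$.

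The core of the argument is now a one-line application of Theorem \ref{PSSpan}: for every Fuchsian $G\neq \bb 1$ and every $l\geq 2q$ one has $\cal A^l_q(G) = A^1_q(\bb D^c,G)$, so in particular
$$ \cal A^{n+1}_2(G) \;=\; A^1_2(\bb D^c,G) \;=\; \cal A^4_2(G)\;. $$
Taking orthogonal complements inside $L^\infty_2(\bb D^c,G)$ with respect to $\langle\cdot,\cdot\rangle^G_2$ and pulling back via the isometry $\psi_2$, the two kernels coincide:
$$ \ker D_0\tilde\beta^\bullet_n\big|_{L^\infty_{(-1,1)}(\bb D^c,G)} \;=\; \ker D_0\tilde\beta\big|_{L^\infty_{(-1,1)}(\bb D^c,G)}\;. $$
Hence the induced map on the quotient $T_{[0]}\cal T_B(G)$ has trivial kernel, which is the claim.

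There is no real obstacle to this proof — all the work has already been done in Theorems \ref{KernelHBM} and \ref{PSSpan}, and the corollary is a bookkeeping consequence. The only point that deserves a line of justification is the descent step (second paragraph), which is where the condition $n\geq 3$ and the nesting $n+1\geq 4$ enter; if one wished to treat a hypothetical $n<3$ the argument would break down precisely there.
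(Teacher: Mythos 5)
Your proposal is correct and follows essentially the same route as the paper: the paper's own proof is the one-line observation that Theorems \ref{KernelHBM} and \ref{PSSpan} force $\mathrm{Ker}\, D_0\tilde\beta^\bullet_n\big|_{L^\infty_{(-1,1)}(\bb D^c,G)} = \mathrm{Ker}\, D_0\tilde\beta^\bullet_3\big|_{L^\infty_{(-1,1)}(\bb D^c,G)} = A^1_2(\bb D^c,G)^\perp$ for $G\neq\bb 1$, which is exactly your kernel-coincidence argument. You merely make explicit the bookkeeping the paper leaves implicit (the identification of $T_{[0]}\cal T_B(G)$ with the quotient by $\ker D_0\pi_T$ and the descent of $D_0\tilde\beta^\bullet_n$ to that quotient), which is a reasonable elaboration rather than a different proof.
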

\begin{proof}
By the above considerations we know that for any $G \neq \bb 1$,
$$ \mathrm{Ker} D_0\tilde \beta^\bullet_n\big|_{L^\infty_{(-1,1)}(\bb D^c,G)} = \mathrm{Ker} D_0 \tilde \beta^\bullet_3\big|_{L^\infty_{(-1,1)}(\bb D^c,G)} = A^1_2(\bb D^c,G)^\perp\;.$$
\end{proof}
Hence, though the dimension of the kernel of the differential of $\beta^\bullet_n$ `grows linearly with $n$', the new null directions are all `strictly universal'.
\subsection{Higher Bers Maps: A Semi-Global Result}\label{HBM-SGR}
In this section we will prove results about the higher Bers maps that are of a different flavor, since they do not deal with the differential but with the map itself, and because of this, they also have to be proven separately for both series. The proof for the $A$ series a little bit more involved, since there we have to obtain information about the solution from information about the `inverse of the solution', whereas for the $B$ series this is more direct.\\

The results characterize all preimages of the origin of the maps $\beta_n^\bullet$, and hence are not local results, but on the other hand they are not global either, becuse we can only prove them for a single fiber of the mapping -  therefore we call them semi-global. They should be seen as the first step to proving the (conjectured) injectivity of higher Bers maps.\\

As already noted already, it is very hard to solve the higher Schwarzian differential equation explicitly for given $\phi \in B_{n-1}(\bb D,G)$. The situation simplifies a little bit for the homogeneous equation, i.e., $\sigma^\bullet_n[f] \equiv 0$, which we will study first. 

Recall Lemma \ref{AltDefHS}, which gave us the following representation of the higher Schwarzian derivative,
$$ \sigma^A_n[f] = \left(\frac{d^{n-3}}{dz^{n-3}} S[f^{-1}] \right)\circ f \cdot (f')^{n-1}\;.$$ 
Hence $\sigma^A_n[f] \equiv 0$ implies 
$$ 0 \equiv \left(\frac{d^{n-3}}{dz^{n-3}} S[f^{-1}] \right)\circ f\;.$$
Now a holomorphic function whose $(n-3)^{\mathrm{th}}$ derivative vanishes identically must be a polynomial of degree $\leq n-4$, hence:
\begin{lemma} \label{HomSol}
	$f$ is a solution to $\sigma^A_n[f] \equiv 0$ iff the Schwarzian of its inverse function is a polynomial of degree 			at most $n-4$.
\end{lemma}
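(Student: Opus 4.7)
The plan is to read off the lemma as an essentially immediate consequence of the inverse-function representation \eqref{SchwInv} from Lemma \ref{AltDefHS}, together with elementary facts about holomorphic functions on simply connected domains.

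First I would invoke \eqref{SchwInv}: for $f$ schlicht,
$$ \sigma^A_n[f] = \left(\frac{d^{n-3}}{dz^{n-3}} S_{f^{-1}}\right) \circ f \cdot (f')^{n-1}\;. $$
Since $f$ is schlicht on $\bb D$, its derivative $f'$ is nowhere zero, so $(f')^{n-1}$ is a nowhere-vanishing factor on $\bb D$. Hence $\sigma^A_n[f] \equiv 0$ on $\bb D$ is equivalent to
$$ \left(\frac{d^{n-3}}{dz^{n-3}} S_{f^{-1}}\right) \circ f \equiv 0 \quad \text{on } \bb D\;. $$

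Next I would use that $f:\bb D \to f(\bb D)$ is a biholomorphism onto a simply connected domain, so pre-composition by $f$ is injective on holomorphic functions defined on $f(\bb D)$. Therefore the previous identity is equivalent to
$$ \frac{d^{n-3}}{dz^{n-3}} S_{f^{-1}} \equiv 0 \quad \text{on } f(\bb D)\;. $$

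Finally, because $S_{f^{-1}}$ is holomorphic on the simply connected domain $f(\bb D)$, a standard induction (integrating $n-3$ times, using that antiderivatives exist and are unique up to constants on a simply connected domain) shows that the vanishing of the $(n-3)$-th derivative forces $S_{f^{-1}}$ to be a polynomial of degree at most $n-4$, and conversely any such polynomial has vanishing $(n-3)$-th derivative. This establishes both directions of the equivalence. There is no real obstacle here; the entire content of the lemma is packaged into the representation \eqref{SchwInv}, and the argument is essentially a one-line consequence once that formula is in hand.
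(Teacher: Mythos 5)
Your proposal is correct and follows exactly the paper's own route: apply the representation \eqref{SchwInv}, cancel the nowhere-vanishing factor $(f')^{n-1}$, and conclude that $S_{f^{-1}}$ has vanishing $(n-3)$-rd derivative, hence is a polynomial of degree at most $n-4$. The extra care you take in justifying the two equivalences (nonvanishing of $f'$ and injectivity of precomposition by the biholomorphism $f$) is implicit in the paper but harmless to spell out.
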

This implies, in contrast to the ordinary Schwarzian, which is zero only for M\"obius transformations, that the higher Schwarzians kill many schlicht functions, and indeed many elements of $\cal T_S(\bb 1)$. We state this in the form of a lemma.
\begin{lemma}
   For any integer $n\geq 4$, there exist non-trivial elements $f \in \cal T_S(\bb 1)$ such that $\sigma^A_n(f) \equiv 0$.
\end{lemma}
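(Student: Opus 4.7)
\emph{Plan.} By Lemma \ref{HomSol}, it suffices to produce, for each $n \geq 4$, a non-trivial $f \in \cal T_S(\bb 1)$ whose inverse has Schwarzian equal to a polynomial of degree at most $n-4$. Since constants are polynomials of degree $0 \leq n-4$ for every $n \geq 4$, a single one-parameter family with $S[f^{-1}]$ a non-zero constant will settle the statement uniformly in $n$.

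The plan is to build such a family from the identity $S[\tan] \equiv 2$. For $0 < \lambda < 1$, set
$$f_\lambda(z) := \lambda^{-1}\arctan(\lambda z).$$
A short direct verification shows that $f_\lambda$ is 1-point normalized at the origin and schlicht on $\bb D$, because the branch points $\pm i/\lambda$ of $\arctan(\lambda\,\cdot\,)$ lie outside the unit disc. Its inverse is $f_\lambda^{-1}(w) = \lambda^{-1}\tan(\lambda w)$, and the chain rule for the Schwarzian together with its invariance under affine post-composition yields $S[f_\lambda^{-1}] \equiv 2\lambda^2$, a non-zero constant.

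It remains to show $f_\lambda \in \cal T_S(\bb 1)$, i.e., to furnish a quasiconformal extension to $\cinf$. Using the identity $S_f = -(S[f^{-1}]\circ f)(f')^2$, a direct computation gives $S_{f_\lambda}(z) = -2\lambda^2/(1+\lambda^2 z^2)^2$, and hence $\|S_{f_\lambda}\|_{B_2(\bb D)} \leq 2\lambda^2/(1-\lambda^2)^2$, which is strictly less than $2$ for $\lambda$ sufficiently small. Theorem \ref{AhlWeil} then supplies a Beltrami coefficient $\mu$ on $\bb D^c$ whose normalized solution $w^\mu$ satisfies $S_{w^\mu|_{\bb D}} = S_{f_\lambda}$. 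Since the only M\"obius transformation that is 1-point normalized at the origin is the identity, the uniqueness clause of Proposition \ref{SchwSol} forces $w^\mu|_{\bb D} = f_\lambda$, placing $f_\lambda$ in $\cal T_S(\bb 1)$. Non-triviality is immediate, as $S_{f_\lambda} \not\equiv 0$ precludes $f_\lambda$ from being M\"obius.

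No serious obstacle is anticipated: the only non-automatic ingredient is the Ahlfors-Weill section, whose hypothesis is met by shrinking $\lambda$. For completeness, essentially the same strategy realizes \emph{any} polynomial $P$ of degree $\leq n-4$ as $S[f^{-1}]$ for some non-trivial $f \in \cal T_S(\bb 1)$: solve the linear ODE $s'' + \tfrac{1}{2} P s = 0$ on $\bb C$ for a small $P$, form $g = s_1/s_2$ normalized at the origin, and take $f = g^{-1}|_{\bb D}$; smallness of $P$ again secures schlichtness on $\bb D$ and the Ahlfors-Weill hypothesis.
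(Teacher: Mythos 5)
Your proof is correct, and it reaches the conclusion by a genuinely more concrete route than the paper's. Both arguments share the same skeleton — reduce via Lemma \ref{HomSol} to producing $f$ with $S_{f^{-1}}$ a small polynomial, and secure membership in $\cal T_S(\bb 1)$ through a $B_2$-norm bound together with the Ahlfors--Weill section (Thm.~\ref{AhlWeil}) — but they differ in how the function is produced. The paper starts from an arbitrary polynomial $P$ with $\|P\|_{B_2(\bb D)} < 2$, solves $S_g = P$ abstractly via Prop.~\ref{SchwSol}, and must then post-compose with a M\"obius transformation so that $g_M(\bb D) \supset \bb D$ before inverting to get $f = g_M^{-1}$. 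You sidestep the inversion-and-adjustment step entirely by exhibiting the closed-form family $f_\lambda = \lambda^{-1}\arctan(\lambda\,\cdot)$, whose inverse and Schwarzian data ($S_{f_\lambda^{-1}} \equiv 2\lambda^2$, $S_{f_\lambda}(z) = -2\lambda^2/(1+\lambda^2 z^2)^2$) you can compute exactly; your final paragraph then recovers the paper's general statement. What your approach buys is explicitness and a uniform witness for all $n \geq 4$ at once; what the paper's buys is the full realization of \emph{every} admissible polynomial as $S[f^{-1}]$, not just constants. One small point: "schlicht because the branch points lie outside the disc" only gives holomorphy and local injectivity ($f_\lambda' \neq 0$ on $\bb D$); global injectivity deserves a word, but it is immediate either from the fact that $\arctan$ inverts the biholomorphism $\tan$ of the strip onto the doubly slit plane, or — more in the spirit of your own computation — from $\|S_{f_\lambda}\|_{B_2(\bb D)} < 2$ and Theorem \ref{SchlichtCrit}, so nothing is actually missing.
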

\begin{proof} Fix $n$, then pick a polynomial $P$ of degree $N\leq n-4$ and a quasidisc $D\subset \bb C$ such that
$$ \| P \|_{B_{2}(D)} < \delta(D)\;,$$
where $\delta(D)$ is the constant from Theorem \ref{SchlichtCrit}, i.e., the constant such that any function with $B_2$ norm less than $\delta(D)$ is schlicht in $D$. This is of course possibe, e.g., choose $D=\bb D$, 
\begin{eqnarray*}
\|P\|_{B_2(\bb D)} &=& \mathrm{sup}_{z \in \bb D} |P(z)| \lambda_{\bb D}^{-2} (z) \leq \mathrm{sup}_{z \in \bb D}|P(z)| \cdot \mathrm{sup}_{z \in \bb D}\lambda_{\bb D}^{-2} (z) \\
&\leq & \mathrm{sup}_{z \in \partial \bb D}\big|\sum_{i=0}^{N} a_i z^i\big| \leq \sum_{i=0}^{N} |a_i|\;,
\end{eqnarray*}
so any polynomial for which the sum of the moduli of the coefficients is less than $\delta(\bb D) = 2$ will do. Pick a function $g \in \cal M_{\mathrm{li}}(\bb D)$ such that $S_g = P$. Then $g$ is schlicht and moreover has a quasiconformal extension to the Riemann sphere\footnote{This follows from a modified version of the Ahlfors-Weil section, which also exists for any quasidisc $D$.} by Theorem \ref{SchlichtCrit}. Finally, pick a M\"obius transformation $M$ such that for $g_M:= M \circ g$, we have $g_M(\bb D) \supset \bb D$. By construction, $g_M^{-1}|_{\bb D} \in \cal T_S(\bb D)$ and $S_{(g_M^{-1})^{-1}} = S_{g_M} = P$, hence $\sigma^A_n(g_M^{-1}) \equiv 0$. 
\end{proof}
Surprisingly, the $\sigma^A_n$ and hence the higher Bers maps have an injectivity property when they are restricted to any finite-dimensional Teichm\"uller space. Theorem \ref{HighInj} below makes this precise. But befoer we come to the theorem we will state two other theorems which will be needed in the proof. \\ \\
First, we remind the reader of the classical theorem on the uniform convergence of normalized Riemann mappings for converging domains.
\begin{thm}\label{RMTconv}
   Let $\{D_j\}$ be a sequence of sc-hyp domains containing a common point $z_0$ converging to $D_0$ in the sense that 
   $$\overline{ D}_{j+1} \subset D_j\qquad \mathrm{and} \qquad \mathrm{int}\: \Big(\bigcap_j D_j\Big) = D_0\;,$$
   and let $\psi_j:\bb D \rightarrow D_j$ be the Riemann mappings normalized by $\psi_j(0) = z_0$, $\psi'(0)>0$. Then 		 the $\psi_j$ converge uniformly to a normalized Riemann mapping $\psi_0: \bb D\rightarrow D_0$.
\end{thm}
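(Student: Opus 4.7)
My plan is to prove the theorem as a specialization of the Carath\'eodory kernel convergence theorem to the monotone nested setting, which yields $\psi_j\to\psi_0$ uniformly on compact subsets of $\bb D$ (this is what ``uniform convergence'' means for a sequence of holomorphic functions on $\bb D$ in this context). The argument has four stages: normality, monotonicity of conformal radii at $z_0$, identification of subsequential limits, and passage from subsequences to the full sequence. Both hypotheses enter essentially: sc-hyp of the $D_j$ is needed for Montel's theorem, while the strong nesting $\overline{D}_{j+1}\subset D_j$ yields monotonicity of derivatives and controls the location of the limit.

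First, since $\psi_j(\bb D)=D_j\subset D_1$ and $\cinf\setminus D_1$ contains at least three points by sc-hyp, the family $\{\psi_j\}$ omits three common values and so Montel's theorem gives normality on $\bb D$. For the monotonicity step, $\psi_j^{-1}\circ\psi_{j+1}:\bb D\to\bb D$ is a holomorphic self-map of $\bb D$ fixing $0$ (both mappings send $0$ to $z_0$), so Schwarz's lemma gives $\psi_{j+1}'(0)\leq\psi_j'(0)$; applying the same argument to $\psi_j^{-1}\circ\psi_0$ (well-defined because $D_0\subset D_j$) produces the lower bound $\psi_j'(0)\geq\psi_0'(0)>0$, so the positive sequence $\psi_j'(0)$ decreases monotonically to a limit $L\geq\psi_0'(0)>0$.

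Next, given any subsequence of $\{\psi_j\}$, normality yields a further subsequence $\psi_{j_k}$ converging locally uniformly on $\bb D$ to a holomorphic $\psi:\bb D\to\cinf$ with $\psi(0)=z_0$ and $\psi'(0)=L>0$; in particular $\psi$ is nonconstant and Hurwitz (applied to the univalent $\psi_{j_k}$) makes $\psi$ univalent. For each $k$ and each $z\in\bb D$ we have $\psi_{j_l}(z)\in D_{j_l}\subset\overline{D_k}$ once $j_l\geq k$, hence $\psi(z)\in\overline{D_k}$ and therefore $\psi(\bb D)\subset\bigcap_k\overline{D_k}$. The strong nesting $\overline{D}_{k+1}\subset D_k$ collapses this intersection to $\bigcap_k\overline{D_k}=\bigcap_k D_k$, whose interior is $D_0$ by hypothesis; since $\psi$ is nonconstant (hence open), we conclude $\psi(\bb D)\subset D_0$. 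Applying Schwarz to $\psi_0^{-1}\circ\psi:\bb D\to\bb D$ then gives $\psi'(0)\leq\psi_0'(0)$, which combined with the earlier bound forces $\psi'(0)=\psi_0'(0)$; Schwarz rigidity yields $\psi=\psi_0$.

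Thus every subsequence has a further subsequence converging locally uniformly to $\psi_0$, whence $\psi_j\to\psi_0$ uniformly on compact subsets of $\bb D$. The main obstacle I anticipate is the identification step, specifically verifying $\mathrm{int}\bigl(\bigcap_k\overline{D_k}\bigr)=D_0$: this relies on the stronger assumption $\overline{D}_{j+1}\subset D_j$ (not merely $D_{j+1}\subset D_j$) and is what, together with the open mapping theorem, places $\psi$ in the correct target $D_0$ so that the Schwarz comparison with $\psi_0$ can close the argument.
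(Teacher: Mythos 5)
Your proof is correct. Note that the paper does not actually prove this statement: it is introduced with ``we remind the reader of the classical theorem'' and used as a black box, so there is no in-paper argument to compare against. What you have written is the standard Carath\'eodory-kernel-convergence argument specialized to the monotone nested case, and all the steps check out: Montel applies because all $\psi_j$ map into $D_1$, whose complement has at least three points; the Schwarz-lemma comparisons $\psi_j^{-1}\circ\psi_{j+1}$ and $\psi_j^{-1}\circ\psi_0$ are well defined precisely because $\overline{D}_{j+1}\subset D_j$ and $D_0\subset\bigcap_j D_j$; the identification $\bigcap_k\overline{D_k}=\bigcap_k D_k$ via the strict nesting, combined with openness of the nonconstant limit $\psi$, is exactly the right way to land $\psi(\bb D)$ inside $D_0=\mathrm{int}\bigl(\bigcap_j D_j\bigr)$; and the equality of derivatives at $0$ plus Schwarz rigidity (the derivative of $\psi_0^{-1}\circ\psi$ at $0$ is positive, hence the rotation is the identity) forces $\psi=\psi_0$, after which the subsequence principle in the metrizable topology of locally uniform convergence finishes the job. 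Your reading of ``converge uniformly'' as locally uniform convergence on $\bb D$ is the correct interpretation of the classical statement; uniform convergence on all of $\bb D$ would require additional hypotheses on the boundaries. The only implicit assumption you share with the theorem statement itself is that $D_0$ is a simply connected hyperbolic domain containing $z_0$ (so that the normalized $\psi_0$ exists); since the theorem's conclusion already presupposes this, relying on it is not a gap in your argument.
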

The second and crucial theorem we need concerns the geometry of the limit set of a quasi-Fuchsian group.
\begin{thm} [\cite{UFaTS}, Thm.~ IV.4.2] \label{LimitSet}
   Let $G'$ be a quasi-Fuchsian group of first kind. Then $\Lambda(G')$ is either a circle on $\cinf$ or it is not         differentiable on a dense subset.
\end{thm}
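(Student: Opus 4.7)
The plan is to establish the contrapositive: if the set of points of $\Lambda(G')$ where the Jordan curve fails to have a tangent is not dense, i.e., there is an open arc $U \subset \Lambda(G')$ on which a tangent exists at every point, then $\Lambda(G')$ is a round circle. Writing $G' = wGw^{-1}$ with $G$ Fuchsian of first kind and $w$ a quasiconformal homeomorphism of $\cinf$, the limit set $\Lambda(G') = w(\partial \bb D)$ is a $G'$-invariant quasi-circle. Since the hyperbolic fixed points of $G$ are dense in $\partial \bb D$ (standard for first-kind Fuchsian groups), their $w$-images are dense in $\Lambda(G')$, so $U$ contains an attracting fixed point $p$ of some loxodromic $\gamma \in G'$, with multiplier $\lambda$, $|\lambda|<1$. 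The $\gamma$-invariance of $\Lambda(G')$ combined with differentiability at $p$ forces the tangent line $L$ to be preserved by the derivative $\gamma'(p) = \lambda$ acting on the tangent space at $p$; this forces $\lambda \in \bb R$.

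The next step is a linearization argument. Pass to a Koenigs chart $\psi$ near $p$ with $\psi(p)=0$ and $\psi \circ \gamma \circ \psi^{-1}(z) = \lambda z$ on some invariant domain $U_0 \ni 0$, and let $L'$ be the transported tangent line. In this chart, $\Lambda_\psi := \psi(\Lambda(G') \cap U_0)$ is invariant under the real scaling $z \mapsto \lambda z$, and the differentiability hypothesis means that for every $\epsilon > 0$ there is $r(\epsilon) > 0$ with $\Lambda_\psi \cap \bb D_{r(\epsilon)}$ contained in the cone of aperture $\epsilon$ about $L'$. For any fixed $r_0 > 0$ small enough that the orbit $\{|\lambda|^n r_0 : n \geq 0\}$ stays inside $U_0$, the scaling invariance gives $\Lambda_\psi \cap \bb D_{r_0} = \lambda^{-n}(\Lambda_\psi \cap \bb D_{|\lambda|^n r_0})$; since cones through the origin are invariant under real scalings, $\Lambda_\psi \cap \bb D_{r_0}$ must lie inside cones of arbitrarily small aperture about $L'$, hence in $L'$ itself. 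Pulling back by $\psi^{-1}$, $\Lambda(G')$ coincides with an analytic arc in a neighborhood of $p$.

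For the final step, the germ of $\Lambda(G')$ at $p$ is an analytic arc invariant under $\gamma$; its maximal analytic continuation is therefore a $\gamma$-invariant analytic curve through $p$, and such a curve must be a circle or line through both fixed points $p$ and $q$ of $\gamma$, by the classical classification of curves invariant under hyperbolic Möbius transformations. Thus $\Lambda(G')$ is locally a circular arc near $p$; applying the argument at every loxodromic fixed point (a dense set in $\Lambda(G')$), $\Lambda(G')$ is locally a circular arc everywhere. Since two distinct circles on $\cinf$ can meet only at isolated points, all these local circles must coincide in a single circle $C$, giving $\Lambda(G') \subset C$, and then $\Lambda(G') = C$ follows because a Jordan curve contained in a circle must be the whole circle. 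The hard part will be the cone-shrinking step inside the Koenigs chart: one must ensure the sequence of rescaled pieces stays inside the linearizing domain and handle the case $\lambda < 0$ (where $z \mapsto \lambda z$ still preserves lines through the origin even though it reverses orientation). A secondary subtlety is the propagation step, where care is needed to ensure the different local circular arcs match up analytically rather than forming a piecewise-circular curve.
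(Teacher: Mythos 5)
First, a point of comparison: the paper does not prove this statement at all --- it is imported verbatim from \cite{UFaTS}, Thm.\ IV.4.2, and used as a black box in the proof of Theorem \ref{HighInj} --- so there is no in-paper argument to measure yours against; I can only judge the proposal on its own. Your opening moves are the standard and correct ones: pick a loxodromic fixed point $p$ in the arc of differentiability (such points are dense there), use invariance of the tangent line under $\gamma'(p)=\lambda$ to force $\lambda\in\bb R$, and use the self-similarity of $\Lambda(G')$ under $\gamma$ to collapse the limit set into the tangent line; the cone-shrinking computation itself is sound. The genuine gap is in how you globalize. Your last step applies the local argument ``at every loxodromic fixed point (a dense set in $\Lambda(G')$)'', but the hypothesis only provides differentiability on the single arc $U$; at fixed points outside $U$ there is no tangent to feed into the argument, so you cannot conclude that $\Lambda(G')$ is locally circular everywhere. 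The intermediate appeal to a ``classical classification'' asserting that a $\gamma$-invariant analytic curve through both fixed points must be a circle or line is also false as stated: for real $\lambda\in(0,1)$ every curve $\{r e^{i f(\log r)}\colon r>0\}$ with $f$ analytic and periodic of period $\log(1/\lambda)$ is invariant and analytic away from the fixed points, and is not a circle. (What is true is that such a curve with a tangent \emph{at} the fixed point lies on a line --- but that is precisely the global statement you are trying to reach, not a lemma you may quote.)

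Both defects disappear if you drop the Koenigs chart, which is an unnecessary complication and the source of the ``stay inside the linearizing domain'' worry you flag as the hard part: $\gamma$ is a M\"obius transformation, hence \emph{globally} conjugate by a M\"obius map to $z\mapsto\lambda z$, so you may normalize $p=0$ and the repelling fixed point to $\infty$ and have exact invariance $\lambda\Lambda(G')=\Lambda(G')$ on all of $\bb C$. With this normalization your cone argument gives $\Lambda(G')\cap\bb D_{r_0}\subset L'$ for some $r_0>0$, and one further use of the dynamics finishes the proof with no propagation at all: for any $w\in\Lambda(G')\setminus\{\infty\}$ one has $\lambda^n w\in\Lambda(G')\cap\bb D_{r_0}$ for $n$ large, hence $\lambda^n w\in L'$, hence $w\in\lambda^{-n}L'=L'$. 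Thus the entire limit set lies on the circle $L'\cup\{\infty\}$ of $\cinf$, and a Jordan curve contained in a circle is that circle. In short, a single loxodromic fixed point admitting a tangent already forces $\Lambda(G')$ to be a circle; the density of fixed points, the analytic continuation, and the matching of local circular arcs are all unnecessary, and in the form you wrote them they do not close the argument.
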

Finally, having these we can state and prove the following.
\begin{thm} \label{HighInj}
   Let $G$ be Fuchsian of first kind. Then $\left(\sigma^A_n\right)^{-1}(0)\cap \cal T_S(G) = \{\bb 1_{\bb D}\}$.
\end{thm}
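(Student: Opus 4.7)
The plan is to combine the structural consequence of Lemma \ref{HomSol}---that $\sigma^A_n[f]\equiv 0$ forces $S[f^{-1}]$ to be a polynomial of degree at most $n-4$---with the limit set dichotomy of Theorem \ref{LimitSet}. Given such an $f$, write $S[f^{-1}]=P$ on $f(\bb D)$. By Proposition \ref{SchwSol}, $f^{-1}=h_1/h_2$ for linearly independent $h_i$ satisfying the linear equation $s''+\tfrac{1}{2}Ps=0$. Since $P$ is polynomial this ODE has no finite singularities, so both $h_i$ are entire functions and $f^{-1}$ extends to a function $F$ meromorphic on all of $\bb C$.

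Next, because $f\in\cal T_S(G)$, there is a quasiconformal homeomorphism $w:\cinf\to\cinf$ with $w|_{\bb D}=f$ and $G':=wGw^{-1}$ Kleinian. As $G$ is Fuchsian of first kind, $G'$ is quasi-Fuchsian of first kind with limit set $\gamma:=w(\partial\bb D)=\partial f(\bb D)$. Since $f\in\cal S^0(\bb D)$ is holomorphic we have $\infty\notin f(\bb D)$, so $\gamma$ is a bounded Jordan curve in $\bb C$. On $\overline{f(\bb D)}$ the meromorphic $F$ agrees with $w^{-1}$ by continuous extension from $f(\bb D)$, so in particular $F(\gamma)=\partial\bb D$.

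The heart of the argument---and the step I expect to be the main obstacle---is to conclude that $\gamma$ is real-analytic away from a discrete set. Since $F$ is meromorphic in a full neighborhood of every $z_0\in\gamma$, the curve $\gamma$ lies inside the real-analytic variety $F^{-1}(\partial\bb D)=\{|F|^2=1\}$. Wherever $F'(z_0)\neq 0$ the implicit function theorem supplies a local analytic parametrization of $\gamma$. Critical points of $F$ on the compact curve $\gamma$ form at most a finite set, and near any such $z_0$ the normal form $F(z)-F(z_0)\sim c(z-z_0)^k$ shows that $\{|F|=1\}$ locally consists of $k$ analytic arcs through $z_0$; the Jordan curve $\gamma$ traces out two of them. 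In either case $\gamma$ is differentiable off a finite set of points.

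Finally, Theorem \ref{LimitSet} provides the dichotomy that $\gamma=\Lambda(G')$ is either a round circle in $\cinf$ or is not differentiable on a dense subset of itself. The previous paragraph rules out the second alternative, so $\gamma$ must be a round circle; hence $f(\bb D)$ is a round disc containing the origin. Then $f:\bb D\to f(\bb D)$ is a conformal map between round discs, i.e., the restriction of a M\"obius transformation, and the 1-point normalization $f(0)=0$, $f'(0)=1$, $f''(0)=0$ pins down $f=\bb 1_{\bb D}$.
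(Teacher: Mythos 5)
Your argument is correct in substance, and for the central technical step it takes a genuinely different route from the paper's. Both proofs start from Lemma \ref{HomSol} and both finish by playing the regularity of $\gamma=\partial f(\bb D)$ off against the dichotomy of Theorem \ref{LimitSet}; the difference is how one gets from ``$S_{f^{-1}}$ is a polynomial'' to ``$\gamma$ is differentiable on more than a non-dense set.'' The paper extends $f^{-1}$ to a slightly enlarged simply connected domain and labors to keep the extension \emph{schlicht} there, using that $S_{f^{-1}}$ is an interior point of $\bb S$ together with a perturbation estimate on the $B_2$-norm (first for starlike $f(\bb D)$, then in general via Carath\'eodory convergence of Riemann maps, Theorem \ref{RMTconv}); the holomorphic arc in $\partial f(\bb D)$ then gives the contradiction. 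You instead note that $s''+\tfrac 12 Ps=0$ has entire solutions, so $f^{-1}$ extends to a globally meromorphic $F=h_1/h_2$, and you read the regularity of $\gamma$ directly off the inclusion $\gamma\subset\{|F|=1\}$. This buys a much shorter argument that needs no injectivity of the extension at all; moreover $F'=-W/h_2^2$ with $W$ a nonzero constant Wronskian, so $F$ is locally injective everywhere and your entire critical-point discussion is vacuous: $\gamma$ is real-analytic at every finite point. Two small repairs are needed. First, ``$\infty\notin f(\bb D)$'' does not make $\gamma$ bounded, since with only a $1$-point normalization the quasicircle $w(\partial\bb D)$ may pass through $\infty$; but your local argument applies at every finite point of $\gamma$, these are dense, and the remaining ``circle in $\cinf$'' alternative (disc or half-plane) is dispatched by the same normalization computation. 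Second, you should justify that $\gamma$ locally \emph{coincides with} an open subarc of the analytic level curve rather than merely sitting inside it --- a topological arc contained in a one-dimensional analytic arc is an open subarc by invariance of domain --- before transferring differentiability from the level set to $\gamma$.
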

\begin{proof} First of all, it is clear that the function $w \mapsto w$ is mapped to the origin in $B_{n-1}(\bb D)$ since it is a M\"obius transformation. So for the rest of the proof we assume $f \in \cal T_S(G)$ and $\sigma^A_n[f] \equiv 0$. According to Lemma \ref{HomSol}, 
$$\sigma^A_n[f]\equiv 0 \qquad \Longleftrightarrow \qquad S_{f^{-1}} \in \bb C[w]_{\leq n-4}\;.$$
In particular $S_{f^{-1}}$ is an entire function and so it extends to any domain $D \supset f(\bb D)$. Hence we know that $f^{-1}$ extends to a meromorphic locally injective function on any such domain $D$. The rest of the proof consists of finding a simply-connected domain $D \supset f(\bb D)$ such that the extension is holomorphic and schlicht. For suppose we have such a domain; then by taking the inverse we have a schlicht extension of $f$ to a domain containing the unit disc. In particular, this domain contains an open arc of $\partial \bb D$ and so the corresponding part of $\partial f(\bb D)$ is a holomorphically embedded curve. But this contradicts the assumption that $f \in \cal T_S(G)$ since limit sets of quasi-Fuchsian groups of first kind are known to be either circles or nowhere rectifiable curves (see Theorem \ref{LimitSet}).\\ \\
In case $f(\bb D)$ is starlike with respect to some $p \in f(\bb D)$, the argument is direct. Without loss of generality assume $p=0$, else we can acheive this by composing back and forth by translations. Then define
$$D_\delta := \{ (1+\delta)z\;, z \in f(\bb D)\}\;,$$
which of course is nothing else than the image of $f(\bb D)$ under the M\"obius transformation $\cal E_\delta(z) = (1+\delta)z$ and is a simply-connected domain containing $f(\bb D)$. Let $f_\delta^{-1}$ denote the extension of $f^{-1}$ to $D_\delta$. We pull back the extension to $f(\bb D)$ and observe that 
\begin{eqnarray}\label{schlicht}
 \cal E_\delta^* f_\delta^{-1} \in \cal S(\bb D) \qquad \Leftrightarrow \qquad f_\delta^{-1} \in \cal S(\bb D_\delta)\;.
\end{eqnarray}
But the first statement can be verified by means of the norm of the Schwarzian derivative. We know that $\bb S$ is closed in $B_2(f(\bb D))$ and that $S_{f^{-1}}$ is an interior point since $f$ was assumed to be in $\cal T_S(G) \subset \cal T_S(\bb 1)$ and $T(\bb 1) = \mathrm{int}\: \bb S$. Now
\begin{eqnarray*}
	S_{\cal E_\delta^* f_\delta^{-1}} &=& S_{f_\delta^{-1} \circ \cal E_\delta} = \left(S_{f_\delta^{-1}} \circ \cal 	E_\delta\right)( \cal E_\delta')^2 + S_{\cal E_\delta}\\
	&=& \left(S_{f_\delta^{-1}} \circ \cal 	E_\delta\right) (1+\delta)^2\;.
\end{eqnarray*}
Hence
\benn
 \|S_{E_\delta^* f_\delta^{-1}}  - S_{f^{-1}}\|_{B_2(f(\bb D))} &= \sup_{z \in f(\bb D)}\left( |P(1+\delta z)(1+\delta)^2 - P(z)|\lambda^{-2}_{f(\bb D)}(z)\right)\\
  &\leq C \cdot \sup_{z \in f(\bb D)} |P((1+\delta) z) - P(z)| + O(\delta)\;,
\eenn
which can be made smaller than any given $\epsilon$ since a polynomial is of course uniformly continuous. This means that for small enough $\delta$, $E_\delta^* f_\delta^{-1}$ is also an interior point of $\cal S(f(\bb D))$, hence schlicht. By (\ref{schlicht}), $f^{-1}_\delta$ is then a schlicht extension of $f^{-1}$.\\

In general, $f(\bb D)$ is a Jordan domain for $f \in \cal T_S(\bb 1)$. We now choose a sequence of domains $\{D_j\}$ approximating $f(\bb D)$ in the sense that
$$ \overline{D}_{j+1} \subset D_j \qquad \textrm{and} \qquad \bigcap_j D_j = f(\bb D)\;,$$ and denote by $\psi_j:\bb D \rightarrow D_j$ the Riemann mapping fixing the origin with $\psi'(0) > 0$. By Theorem \ref{RMTconv}, the sequence $\{\psi_j\}$ converges uniformly to the normalized Riemann mapping of $f(\bb D)$, which by our normalization is $f$ itself. Denote by $f^{-1}_j$ the extension of $f^{-1}$ to $D_j$. We now obtain a sequence of functions $\{(\psi_j \circ f^{-1})^*f_j^{-1}\}$ on $f(\bb D)$; we claim that they are schlicht for big enough $j$. We show this in the same way as before, namely by estimating the differences of $B_2$-norms,
\benn
   &\| S_{(\psi_j \circ f^{-1})^*f_j^{-1}} - S_{f^{-1}}\|_{B_2(f(\bb D))}= \\ &\|S_{f^{-1}} \circ (\psi_j \circ f^{-1})((\psi_j \circ f^{-1})')^2 + S_{\psi_j \circ f^{-1}} - S_{f^{-1}}\|_{B_2(f(\bb D))}\;.
   \eenn
Now, by construction, $(\psi_j \circ f^{-1})$ converges uniformly to the identity function, hence its Schwarzian derivative converges uniformly to zero, and so
\begin{eqnarray*}
	\|S_{(\psi_j \circ f^{-1})^*f_j^{-1}} - S_{f^{-1}}\|\leq C \cdot \sup_{z \in f(\bb D)} |P((\psi_j \circ f^{-1})(z)) - P(z)| + O(j)\;.
\end{eqnarray*}
$O(j)$ is, by slight abuse of notation, an expression which goes to zero as $j$ goes to $\infty$. Therefore the same conclusion as in the starlike case is valid.
\end{proof}
Let us now look at the $B$ series. Here one can actually write down the explicit solution to the equation $\sigma^B_n[f]\equiv 0$. In fact,
$$ 0 = \sigma^B_n[f] = -2(f')^{\frac n2 - 1}\frac{d^{n-1}}{dz^{n-1}}\left( (f')^{1-\frac n2}\right)\;,$$
immediately implies
\begin{equation} \label{SoltoBSeries} f' = \left(\alpha_0 + \ldots + \alpha_{n-2}z^{n-2}\right)^{-2\frac 1{n-2}}\;,\end{equation}
for arbitrary coefficients $\alpha_i \in \bb C$. This equation can of course be integrated easily, and hence we see that the operators of the $B$ series also always have non-trivial homogeneous solutions for $n\geq 4$. Now, by some minor modifications, the second half of the proof of Thm.~ \ref{HighInj} also yields a proof of the analogous theorem for the $B$ series, because all that was needed for the contradiction was that there is some arc on $\partial \bb D$ over which the function extends holomorphically. Obviously, the solutions given in \eqref{SoltoBSeries} satisfy this for almost all of $\partial \bb D$, except for possible isolated zeroes of the polynomial $\alpha_0 + \ldots + \alpha_{n-1}z^{n-2}$.  
\begin{thm}\label{HighInj2}
   Let $G$ be Fuchsian of first kind. Then $\left(\sigma^B_n\right)^{-1}(0)\cap \cal T_S(G) = \{\bb 1_{\bb D}\}$.
\end{thm}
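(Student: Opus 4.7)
The plan is to exploit the explicit first integral of the equation $\sigma^B_n[f] \equiv 0$ given in \eqref{SoltoBSeries} to obtain an immediate holomorphic extension of $f$ across an arc of $\partial \bb D$, and then to apply essentially the same limit-set argument used in the concluding step of the proof of Theorem \ref{HighInj}.

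Assume $f \in \cal T_S(G)$ satisfies $\sigma^B_n[f] \equiv 0$. By \eqref{SoltoBSeries},
$$ f'(z) = P(z)^{-\frac{2}{n-2}}, \qquad P(z) := \alpha_0 + \alpha_1 z + \cdots + \alpha_{n-2} z^{n-2}, $$
for some polynomial $P$ of degree at most $n-2$ and a fixed branch of the root. The polynomial $P$ has at most $n-2$ zeros in $\bb C$, so I would choose an open arc $\gamma \subset \partial \bb D$ disjoint from $P^{-1}(0)$ and then a small open neighborhood $U$ of $\gamma$ in $\bb C$ on which $P$ is non-vanishing and such that $\bb D \cup U$ is simply connected (a sufficiently thin tubular thickening of $\gamma$ works). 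On $\bb D \cup U$ the branch of $P^{-2/(n-2)}$ determined by $f'$ on $\bb D$ extends without monodromy to a non-vanishing holomorphic function; integrating along a path from the origin shows that $f$ itself extends to a holomorphic function on $\bb D \cup U$ whose derivative does not vanish. The extension is therefore locally injective, and $f(\gamma)$ is a real-analytic embedded arc contained in $\partial f(\bb D)$.

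Next, $\partial f(\bb D)$ coincides with the limit set $\Lambda(G')$ of the quasi-Fuchsian group $G' := w G w^{-1}$, where $w$ is any quasiconformal extension of $f$ to $\cinf$; since $G$ is of first kind, so is $G'$. By Theorem \ref{LimitSet}, $\Lambda(G')$ is either a round circle on $\cinf$ or fails to be differentiable on a dense subset. The presence of the real-analytic arc $f(\gamma)$ rules out the second alternative, so $\Lambda(G')$ is a round circle and $f(\bb D)$ is a round disc. Then $f:\bb D \rightarrow f(\bb D)$ is a conformal isomorphism onto a round disc, hence a M\"obius transformation, and the 1-point normalization at the origin forces $f = \bb 1_{\bb D}$.

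The main technical point I anticipate is the monodromy-free continuation of the branch of $P^{-2/(n-2)}$, which reduces to arranging simple connectivity of $\bb D \cup U$ and is essentially topological bookkeeping. Unlike the $A$-series treatment in Theorem \ref{HighInj}, no starlike-versus-Jordan-domain case distinction and no Riemann-mapping approximation are needed here, because the explicit solution formula makes the schlicht extension across a boundary arc automatic; once that arc is in hand the argument merges with the endgame of the proof of Theorem \ref{HighInj}.
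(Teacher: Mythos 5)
Your proposal is correct and follows essentially the same route as the paper: the paper likewise reads off the schlicht extension of $f$ across an arc of $\partial\bb D$ avoiding the zeros of the polynomial directly from the explicit formula \eqref{SoltoBSeries}, and then reuses the limit-set endgame (Theorem \ref{LimitSet}) from the proof of Theorem \ref{HighInj}. The only difference is one of exposition: you spell out the monodromy bookkeeping and the ``circle $\Rightarrow$ M\"obius $\Rightarrow$ identity by normalization'' conclusion explicitly, where the paper leaves these as ``minor modifications.''
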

Of course both of the theorems in this section can be extended to such Fuchsian groups of second kind, for which similar facts about the fractal nature of their limit sets is known.
\subsection{Higher Bers Maps: Further Remarks, Questions and Future Work}\label{HBM-future}
In this section we remark on the implications of the previous results and comment on difficulties as well as pose questions which naturally come to mind and constitute potential future research.

\subsubsection{Dependence on the base point and chain rule of higher Schwarzians}
By this we of course mean that we only succeeded in studying the differential of the higher Bers maps at the origin. As mentioned earlier, this is precisely the way Bers proceeded in his study of the Bers embedding, only that in that case he had the chain rule
$$ S_{f \circ g} = (S_f \circ g) \cdot (g')^2 + S_g $$
at his disposal, which after taking the derivative of this equation produced a relation between the differentials at different points (similarly to Equation \eqref{Isos}). Of course, by iterative use of the chain rule for functions, a similar thing can \emph{in principle} be written down for the $\sigma^\bullet_n$ for \emph{fixed n}, but the shape of the equation will depend heavily on $n$. Some closed form of the chain rule would be nice, although highly unlikely to exist because of the \emph{non-homogenity} of $\sigma^\bullet_n$ (see Section \ref{OtherWork}). 
\begin{ex}
   Is there a good formulation of a chain rule for higher Schwarzians? 
\end{ex}
Philosophically thinking, however, the maps should only quantitatively depend on the base point, not qualitatively, since it does not matter how we uniformize the Riemann surface in the beginning, so there is strong reason to believe that $\beta^\bullet_n$ is an embedding of $\cal T_S(G)$ into $B_{n-1}(\bb D)$. Also the semi-global result (Thm.~ \ref{HighInj}) gives further reason to hope that this is the case. On the other hand, observe that any complex linear combination $\alpha \sigma_n^A + \beta \sigma_n^B$ also induces a holomorphic map (which also should be called a higher Bers map), and certainly injectivity is not a phenomenon that is necessarily well-behaved under addition.
\begin{ex}
   Are (some of) the higher Bers maps $\beta^\bullet_n$ embeddings? 
\end{ex}
\subsubsection{Solution Theory of Higher Bers Maps}
The second major difference in the theory for $n>3$ as opposed to $n=3$ is the solution theory of the equation $\sigma^\bullet_n[f] = \phi$. For $n=3$, although this is still a complicated, non-linear, 3rd-order differential equation, we have full control over the solutions as described in Theorem \ref{SchwSol} due to the fact that there is an associated \emph{linear} ODE. This solution is a great tool; e.g., this is needed in the explicit construction of the Ahlfors-Weill section, which again has great theoretical impact on Teichm\"uller theory.\\

We want to point out briefly the difficulties of a straight-forward generalization of the solution scheme to higher Schwarzians: First of all, let there be a linear equation of order $k$ with holomorphic non-vanishing leading coefficient in analogy to the linear ODE for the Schwarzian. Such an equation would then have $k$ independent solutions $\phi_i$, out of which we would have to construct a locally injective function $f$ in such a way that a change of basis $\tilde \phi_i = \Lambda_i^j \phi_j$ produces a new solution $\tilde f$ with same value under the higher Schwarzian, i.e., $\sigma^\bullet_n[\tilde f] = \sigma^\bullet_n[f]$. This alone, i.e., to describe the fibres of the operators $\sigma^\bullet_n$ is a very difficult.
\begin{ex}
   Is there a nice, explicit and useful solution theory for the higher Schwarzians which gives more insight on the geometry of the higher Bers maps? 
\end{ex}
As a side note: Although not satisfying the last requirement, Kim does something very interesting in \cite{SDaM}. He treats the general holomorphic ODE of degree $d$ with non-vanishing leading coefficient, 
$$ y^{(d)} + p_{d-2}y^{(d-2)} + \ldots + p_0 y = 0\;,$$
and defines $f_i=\phi_i / \phi_{d}$, for $i=1,\ldots , d-1$, where the $\phi_i$ are the linearly independent solutions. He is then able to find general expressions for the coefficients 
$$p_l = \Phi_l(f_1,\ldots, f_{d-1})$$
in terms of the quantities $f_i$ (\cite{SDaM}, Thm.~ 2.1). These are quite involved so we won't reproduce them here. For $d = 2$, $\Phi_0$ is precisely the Schwarzian, of course. For $d=3$, the expression for $\Phi_0$ and $\Phi_1$ are also given explicitly in the paper (p.~ 4, middle). A quick glance at them, however,  reveals that they have nothing to do with the higher Schwarzian derivatives, because they have \emph{second order derivatives of the $f_i$ in the denominator}.
\subsubsection{The space $B_\infty(\bb D)$ and out- resp.~ inradii of $T^\bullet_n(G)$} 
Recall that for a bounded domain $D$, $B_q(D) \subset B_{q'}(D)$ for $q \leq q'$, so let us denote the inclusion by $i_q^{q'}$. 
This data defines an inductive system. We will denote the corresponding inductive limit space by
$$ B_\infty(\bb D) := \bigcup_{j=2}^{\infty} B_j (\bb D)\;.$$
This space naturally carries the \emph{final topology}, sometimes also called the \emph{inductive topology}, which is by definition the finest topology such that all the induced maps $i^\infty_q: B_q(\bb D) \rightarrow B_\infty(\bb D)$ are continuous. This inductive system is, however, not \emph{strict} in the sense of (\cite{ACiFA}, Def. IV.5.12), since the topology of $B_q$ induced by the norm $\|\cdot\|_{B_{q'}}$, i.e., the induced topology on $B_q$ viewed as a subspace of $B_{q'}$, does not agree with the norm topology of $B_q$, and under this circumstance it is hard to say something about the limit space in general, even when the sequence consists of Banach spaces.\\ \\
As a first observation, the norms of the inclusions are uniformly bounded.
\begin{lemma}
   For any $f\in B_N(\bb D)$, the sequence $\|i^{n}_N f\|_{B_n(\bb D)}$ converges to $f(0)$ for $n\rightarrow \infty$.
\end{lemma}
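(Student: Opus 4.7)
The plan is to interpret the statement as asserting $\lim_{n\to\infty}\|i^n_N f\|_{B_n(\bb D)} = |f(0)|$ (the left side is a nonnegative real number, so ``$f(0)$'' must mean its modulus). Recalling $\lambda_{\bb D}(z) = (1-|z|^2)^{-1}$, the quantity to analyze is
\[ \|f\|_{B_n(\bb D)} = \sup_{z \in \bb D} |f(z)|(1-|z|^2)^n. \]

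First I would obtain the easy lower bound by evaluating the sup at $z=0$, where $(1-|0|^2)^n = 1$, giving $|f(0)| \leq \|f\|_{B_n(\bb D)}$ for every $n$. This already shows $\liminf_n \|f\|_{B_n(\bb D)} \geq |f(0)|$ with no effort.

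The matching upper bound is where the hypothesis $f \in B_N(\bb D)$ must be used. Given $\epsilon > 0$, I would choose $\delta \in (0,1)$ so small that $|f(z)| \leq |f(0)| + \epsilon$ on $\{|z| \leq \delta\}$ (possible by continuity of $f$ at $0$). On this inner disc the factor $(1-|z|^2)^n \leq 1$ yields directly $|f(z)|(1-|z|^2)^n \leq |f(0)| + \epsilon$. On the complementary annular region $\{\delta < |z| < 1\}$, I would invoke the $B_N$-bound $|f(z)| \leq \|f\|_{B_N(\bb D)}(1-|z|^2)^{-N}$ to write
\[ |f(z)|(1-|z|^2)^n \leq \|f\|_{B_N(\bb D)}(1-|z|^2)^{n-N} \leq \|f\|_{B_N(\bb D)}(1-\delta^2)^{n-N}, \]
where the last step uses that $1-|z|^2 < 1-\delta^2$ on this region and $n-N > 0$ once $n$ is large. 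Since $(1-\delta^2)^{n-N} \to 0$ as $n\to\infty$, this contribution is eventually below $\epsilon$.

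Combining the two regions yields $\|f\|_{B_n(\bb D)} \leq |f(0)| + 2\epsilon$ for all sufficiently large $n$, hence $\limsup_n \|f\|_{B_n(\bb D)} \leq |f(0)|$ after sending $\epsilon \to 0$, and together with the lower bound the limit equals $|f(0)|$. There is no serious obstacle here; the only point to notice is that mere boundedness of $f$ on compact subsets would not suffice for the annular estimate, and one genuinely needs the $B_N$-bound to control $|f|$ uniformly as $|z| \to 1$. This is also consistent with the preceding sentence in the text, since the lemma immediately implies that for each fixed $f$ the sequence $\|i^n_N f\|_{B_n(\bb D)}$ is bounded, so the Banach--Steinhaus theorem then forces uniform boundedness of the inclusion norms $\|i^n_N\|$.
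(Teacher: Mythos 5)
Your proof is correct, and it uses the same two ingredients as the paper's own argument: continuity of $f$ at the origin together with the $B_N$-bound, which forces $|f(z)|(1-|z|^2)^n\to 0$ away from $0$. The difference is one of rigor rather than of strategy. The paper's proof only records the \emph{pointwise} statement that $|f(z)|\lambda_{\bb D}^{-N-j}(z)\to 0$ for each fixed $z\neq 0$ while the value at $z=0$ stays equal to $|f(0)|$; pointwise convergence alone does not justify passing to the supremum. Your $\delta$-splitting into the inner disc $\{|z|\le\delta\}$ and the annulus $\{\delta<|z|<1\}$, with the uniform bound $\|f\|_{B_N(\bb D)}(1-\delta^2)^{n-N}$ on the annulus, is exactly the missing uniformity argument, so your write-up is the more complete one. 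Two minor remarks: the statement should indeed read $|f(0)|$, as you note; and the closing appeal to Banach--Steinhaus is unnecessary, since for $n\ge N$ one has directly $|f(z)|(1-|z|^2)^n\le |f(z)|(1-|z|^2)^N\le\|f\|_{B_N(\bb D)}$, i.e.\ $\|i^n_N\|\le 1$ without any uniform boundedness principle.
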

\begin{proof} If $f$ is constant the statement is trivial. Else as a nonconstant holomorphic function, $|f|$ approaches its supremum as $z \rightarrow \partial \bb D$. Since $f$ is in $B_N(\bb D)$, $|f|\lambda^{-N}:= M$ is bounded, and so since $1/\lambda_{\bb D} < 1$ except at the origin,
$$ |f(z)| \lambda_{\bb D}^{-N-j}(z) \rightarrow 0 \qquad \mathrm{for} \;j\rightarrow \infty \; \mathrm{and} \; z \in \bb D \backslash \{0\}\;.$$ 
On the other hand, $\lambda_{\bb D} (0) = 1$ so there the above sequence equals $|f(0)|$.
\end{proof}
So far nothing really interesting has happened. It becomes more interesting if we look at a more interesting sequence in $B_\infty(\bb D)$ induced by the higher Schwarzians. Any $f \in \cal S(\bb D)$ 
induces two sequences
$$s^\bullet_j(f):=\sigma^\bullet_j[f] \subset B_{j-1}(\bb D)\;,$$
which can of course also be viewed as a sequence $\{S^\bullet_j:= i^\infty_j s^\bullet_j(f)\}$ in $B_\infty(\bb D)$.
\begin{ex}
   For which $f$ does such a sequence converge resp.~ diverge? If $f \in \cal T_S(G)$, does the behaviour depend on the     group $G$? Can one reasonaby define $T^\bullet_\infty(G) \subset B_\infty(\bb D)$ for some Fuchsian groups?
\end{ex}
Recall that although the images $\bb S_n$ in $B_{n}(\bb D)$ are bounded, the sharp bound is given by $c^A_n := 4^{n-3}(n-2)!6$ and \mbox{$c^B_n := 2(n-2)n \ldots (3n-6)$} by Proposition \ref{higherbound} resp.~ \ref{higherbound2}, which grow (very) quickly with $n$. A related question is the following.
\begin{ex}
   How do the outradii $o^\bullet_n(G)$ of $T^\bullet_n(G)$,
   $$ o^\bullet_n(G) := \mathrm{sup}_{f \in \cal T_S(G)} \| s^\bullet_n(f)\|_{B_{n-1}(\bb D)}\;,$$
   behave, and how do they depend on the nature of the group $G$?
\end{ex}
Of course $o^\bullet_n(\bb 1) = c^\bullet_n$, but recall for the Bers embedding there exist groups $G$ for which $o_3(G)<o_3(\bb 1)$, e.g., finitely generated Fuchsian groups. One could expect that some similar behaviour is reflected in the higher outradii.\\

Similarily one can define the $n$-th inradius of $T^\bullet_n(G)$ as
$$ i^\bullet_n(G) := \sup_{\delta \in \bb R} \{\bb D_\delta \subset T^\bullet_n(G)\} \;.$$
Note that $i^\bullet_n(G) = 0$ for groups of first kind by dimensional reasons and that $i^\bullet_n(\bb 1) > 0$ by the implicit function theorem. An estimate for this number would yield as a corollary a new criterion for univalence resp.~ quasiconformal extendability of a function $f \in \cal O(\bb D)$.\\ \\
One approach to this question is to study the $n$-th Ahlfors-Weill sets $AW_n^\bullet \subset B_{n-1}(\bb D)$, which are defined to be the images
$$ AW_n^\bullet := (\beta_n^\bullet \circ s)\left(\bb B_2(B_2(\bb D))\right)$$
of the ball of radius two under the Ahlfors-Weill section composed with an $n$-th higher Bers map. As mentioned previously, the $AW_n^\bullet$ have non-empty interior.
\begin{ex}
    Are the Ahlfors-Weill sets domains?
\end{ex}
\subsubsection{Other Operators}\label{OtherWork}
As we showed in Theorem \ref{higherBersholomorphic}, there are a wealth of operators that lead to holomorphic mappings of Teichm\"uller space into the Banach spaces $B_m(\bb D)$. More precisely, any differential operator $Q$ which maps schlicht functions to holomorphic functions, for which the expression $Q[f]$ is a polynomial over $\bb C$ in $f'', \ldots, f^{(N)}$ and $(f')^{-1}$ and which satisfies 
\begin{equation}\label{trafoBehImp}Q[f\circ g] = (Q[f] \circ g)(g')^m\;,\end{equation}
induces a holomorphic map $\beta^Q:\cal T_B(G) \rightarrow B_m(\bb D)$.\\

In Theorem \ref{ThmDifferential} we proved that a large subclass of these operators have the same differential at the origin and this differential is surjective by Theorem \ref{HBDiffSurj}. Any estimate on the inradius of $i^Q(\bb 1)$ for such an operator $Q$ yields a criterion for quasiconformal extendability, and in particular, for schlichtness of functions.
\begin{ex}
   Can one obtain a general theorem on the universal inradii $i^Q(\bb 1)$ of the class of operators satisfying the prerequisites of Theorem \ref{ThmDifferential} in terms of the coefficients of the operators? Does this yield a set of new and systematic criteria for schlichtness?
\end{ex}
The paper \cite{MIOoRS} should be consulted here, since it contains a rather complete classification of operators satisfying \eqref{trafoBehImp}.\\

There is one class of operators we want to mention here that has been studied previously by Harmelin in \cite{UFXXX}, which he called \emph{homogeneous operators} or \emph{homogeneous higher Schwarzians}. They don't satisfy the crucial requirement \eqref{trafoBehImp}, which entered many proofs in the present paper, yet still lead to holomorphic mappings. The holomorphicity is estabilshed with the help of the homogenity of the operators. Let us give a more precise definition: A differential operator is called a \emph{homogeneous higher Schwarzian}, iff it is a polynomial,
$$ P_N[f] = \sum a_I \phi_I(f)\;, \quad \phi_I(f) = \phi_{i_1}(f)\ldots\phi_{i_{j(I)}}(f)\;, $$
where $\phi_n(f) := (S_f)^{(n-2)}$ and $I$ is a multiindex where each index has values in $\{0,2,3,\ldots,N\}$ and each of the monomials $\phi_I$ has the same total number of derivatives of the Schwarzian, i.e., $|I|=\sum n_i=N$ for all $I$ in the sum. Since the derivative of an element of $B_n(\bb D)$ is an element of $B_{n+1}(\bb D)$, $P_N(f)$ is a bounded $N$-differential. And because of this homogenity in the Schwarzian, these operators are M\"obius invariant with respect to postcomposition, i.e., $P[g\circ f] = P[f]$.\\

For to be applicable in Teichm\"uller theory, however, it is clear that the operators have to incorporate the group $G$ in some meaningful way, as for example equation \eqref{trafoBehImp}, and hence the homogeneous higher Schwarzians are not interesting from the point of view of Teichm\"uller theory. The only point these operators have in commom with the class of operators described in Theorem \ref{ThmDifferential} is the differential at the origin.

\end{document}